\theoremstyle{plain}
\newtheorem{theorem}{Theorem}[section]
\newtheorem{prop}[theorem]{Proposition}
\newtheorem{lemme}[theorem]{Lemma}
\theoremstyle{definition}
\newtheorem{hyp}[theorem]{Assumption}
\newtheorem{ex}[theorem]{Example}
\newtheorem{rque}[theorem]{Remark}
\title{Slow and fast scales for superprocess limits of age-structured populations}
\author{Sylvie M\'{e}l\'{e}ard\footnote{CMAP, Ecole Polytechnique} and Viet Chi Tran\footnote{Equipe Probabilit\'{e} Statistique, Laboratoire Paul Painlev\'{e}, USTL}}
\date{\today}
\numberwithin{equation}{section}
\newcommand{\Co}{\mathcal{C}}
\def\D{\mathbb{D}}
\def\N{\mathbb{N}}
\def\P{\mathbb{P}}
\def\R{\mathbb{R}}
\def\E{\mathbb{E}}
\def\M{\mathcal{M}}
\def\X{\mathcal{X}}
\def\ind{{\mathchoice {\rm 1\mskip-4mu l} {\rm 1\mskip-4mu l}
{\rm 1\mskip-4.5mu l} {\rm 1\mskip-5mu l}}}
\def\eg{\textit{e.g.} }
\def\ie{\textit{i.e.} }
\def\etal{\textit{et al.} }
\newcommand{\MF}{\mathcal{M}_F}
\newcommand{\me}{\medskip \noindent}
\newcommand{\bi}{\bigskip \noindent}
\newcommand{\be}{\begin{eqnarray}}
\newcommand{\ee}{\end{eqnarray}}
\newcommand{\ben}{\begin{eqnarray*}}
\newcommand{\een}{\end{eqnarray*}}
\begin{document}

\maketitle

\begin{abstract}
A superprocess limit for an interacting birth-death particle system modelling a population with trait and physical age-structures is established. Traits of newborn offspring are inherited from the parents except when mutations occur, while ages are set to zero. Because of interactions between individuals, standard approaches based on the Laplace transform do not hold. We use a martingale problem approach and a separation of the slow (trait) and fast (age) scales. While the trait marginals converge in a pathwise sense to a superprocess, the age distributions, on another time scale, average to equilibria that depend on traits. The convergence of the whole process depending on trait and age,  only holds  for finite-dimensional time-marginals.
We apply our results to the study of  examples illustrating different cases of trade-off between competition and senescence.
\end{abstract}

\noindent \textbf{Keywords:} Interacting particle system ; age-structure ;  superprocess ; slow and fast scales ; trait-structured density-dependent population.
\noindent \textbf{MSC:} 60J80 ; 60K35 ; 60G57.

\section{Introduction}

We consider an asexual population in which the survival probability and reproduction rate of each individual are characterized by a quantitative trait,
such as for example the body size, or the rate of food
intake. As emphasized by Charlesworth \cite{charlesworth}, most of these abilities also depend on age. In this paper, we are interested in studying the joint effects of age and trait structures in the interplay between ecology and evolution. Evolution, acting on the distribution of traits in the population,
is the consequence of three basic mechanisms: heredity, which transmits traits to new offspring, mutation, which drives the
variation in the trait values, and selection
between these different trait values, which is due to ecological interactions. Some questions on evolution are strongly related to the age structure. For example, we would like to understand how the age influences the persistence of the population or the trait's evolution, or which age structure will appear in long time scales for a given trait. 

Our model relies on an individual-based birth and death process with age and trait introduced in  M\'{e}l\'{e}ard and Tran \cite{meleardtran} (see also Ferri\`{e}re and Tran \cite{ferrieretran}). It generalizes the trait-structured case developed in Champagnat \etal  \cite{champagnatferrieremeleard,champagnatferrieremeleard2} and the age-structured case in Jagers and Klebaner \cite{jagersklebaner,jagersklebaner2}, Tran \cite{trangdesdev}. Here, each individual is characterized by a quantitative trait and by its physical age, that is the time since its birth. We describe the dynamics of large populations composed of small individuals with short lives. Life-lengths and durations between reproductions are assumed to be proportional to the individuals' weights, and these weights are inversely proportional to the population size. At each birth, the age is reset to $0$, inducing a large asymmetry between the mother and her daughters. When a mutation occurs, the new mutant trait is close to its ancestor's one, yielding a slow variation of the trait. Hence, these two mechanisms lead to a difference of time scales between age and trait. Moreover, we take resource constraints into account by including competition between individuals.

Our aim is to study an approximation of this model when the population size tends to infinity. Our main result shows that two qualitatively different asymptotic behaviors arise from the separation of the fast age and slow trait time scales.
While the trait marginals converge in a pathwise sense to a superprocess, the age distributions stabilize into deterministic equilibria that depend on the traits.
To our knowledge, nothing has been done before in the setting we are interested in, with slow-fast variables and nonlinearity.
The techniques we use are based on martingale properties and generalize to this infinite dimensional setting the  treatment of the slow-fast scales for diffusion processes, developed by  Kurtz \cite{kurtzaveraging}, and by Ball \etal \cite{ballkurtzpopovicrempala}.

Our results generalize Athreya \etal \cite{athreyaathreyaiyer}, Bose and Kaj \cite{bosekaj}, where averaging phenomena are proved in the case where birth and death rates do not depend on age. In our case with dependence, the lifelength of an individual cannot be governed by an age distribution function independent of trait, or by a positive continuous additive functional, as in Bose and Kaj \cite{bosekaj2}, Dawson \etal \cite{dawsongorostizali}, Dynkin \cite{dynkin91}, Kaj and Sagitov \cite{kajsagitov}, Fleischmann \etal \cite{fleischmannvatutinwakolbinger}, Wang \cite{wangbio}. Moreover the Laplace characterization that these authors use extensively does not hold anymore when interactions between particles are allowed.
In Evans and Steinsaltz  \cite{evanssteinsaltz}, the damage segregation at cell fissioning is considered as an age. Nevertheless in their model there is no interaction between cells, and at each birth, the daughters' ages (as damages) are not reset to zero, but distributed asymmetrically following a distribution centered on the mother's age.\\

In Section \ref{sectionmicroscopique}, the population dynamics is described by an individual-based point measure-valued birth and death process with an age and trait structures. In Section \ref{sectionsuperprocessus}, we establish our main limit theorem (Theorem \ref{propconvergence}) where the averaging phenomenon is obtained. In Section \ref{sectionexemples}, we consider as an illustration two models where the population is structured by size and physical or biological age.  We present simulations and comment the different behaviors.
\\


\noindent \textbf{Notation:}
For a given metric space $E$, we denote by $\mathbb{D}([0,T],E)$ the space of right continuous and left limited (c\`ad-l\`ag) functions from $[0,T]$ to $E$. This space is embedded with the Skorohod topology (\eg \cite{rachev, joffemetivier}).

\par If $\X$ is a subset of $\R^d$, we denote by $\mathcal{M}_F(\X)$ the set of finite measures on $\X$, which will be  usually embedded with the topology of weak convergence. Nevertheless, if $\X$ is unbounded, we will also consider  the topology of vague convergence. If we need to differentiate both topological spaces, we will denote by  $(\mathcal{M}_F(\X),w)$, respectively  $(\mathcal{M}_F(\X),v)$, the space of measures endowed by the weak (resp. vague) topology.  For a measurable real bounded function $f$, and a measure $\mu \in \mathcal{M}_F(\X)$, we will denote
$$\langle \mu,f\rangle = \int_{\X} f(x) \mu(dx).$$
For $\ell\in \N$, we denote by $\Co^\ell_b(\X,\R)$ the space of real bounded functions $f$ of class $\Co^\ell$ with bounded derivatives.
In the sequel, the space $\Co^{0,1}_b(\X\times \R_+,\R)$ (resp. $\Co_c(\X,\R)$, $\Co^1_c(\R_+,\R)$) denotes the space of continuous bounded real functions $\varphi(x,a)$ on $\X\times \R_+$ of class $\Co^1$ with respect to $a$ with bounded derivatives (resp. of continuous real functions on $\X$ with compact support, of $\Co^1$ real functions on $\R_+$ with compact support in $[0,+\infty)$).

\section{Microscopic age and trait structured particle system}\label{sectionmicroscopique}


We consider a discrete population in continuous time where the individuals reproduce, age and die with rates depending on a
hereditary trait and on their age. An individual is  characterized by a quantitative trait $x\in \X$ where $\X$ is a closed subset of $\R^d$ and by its physical age $a\in \R_+$, \ie the time since its birth. The individuals reproduce asexually during their lives, and the trait from the parent is transmitted to its offspring except when a mutation occurs.
Resources are shared by the individuals, implying an interaction described by a kernel comparing the competitors' traits and ages. Senescence, which quantifies the decrease of fertility or survival probability with age, is also taken into account. These two phenomena create selection pressure.\\

\noindent We are interested in approximating the dynamics of  a  large population whose size is parametrized by some integer $n$. This parameter can be seen as the order of the carrying capacity, when the total amount of resources is assumed to be fixed. If the parameter $n$ is large, there will be many individuals with little per capita resource and we renormalize the individual biomass by the weight $1/n$.

\noindent We consider here allometric demographies where the lifetime and  gestation length of each individual are proportional to its biomass. Thus the birth and death rates are  of  order  $n$, while preserving the demographic balance.
As a consequence the right scale to observe a nontrivial limit in the age structure, as $n$ increases, is of order $1/n$.
\\

\noindent The population at time $t$ is represented by a point measure as follows:
\begin{equation}
X^n_t = {1\over n}  \sum_{i=1}^{N^n_t}\delta_{(X_i(t),A_i(t))},\label{defht}
\end{equation}where $N^n_t=\langle n X^n_t,1\rangle$ is the number of individuals alive at time $t$, and $X_i(t)$ and $A_i(t)$ denote respectively the trait and age of individual $i$ at time $t$ (individuals are ranked in lexicographical order for instance).

\medskip \noindent
The dynamics of  $X^n$  is given as follows:
\begin{itemize}

\item The birth of an individual with trait $x\in \X$ and age $a\in \R_+$ is given by
$\ n\, r(x,a)+b(x,a)$.  The new offspring is  of age $0$ at birth.
Moreover, it inherits of the trait $x$ of its ancestor with probability $1-p(x,a)\in [0,1]$ and  is a mutant with probability $p(x,a)\in [0,1]$. The mutant trait  is then  $x+h$, where the variation $h$ is randomly chosen following  the distribution $\pi^n(x,dh)$.

 \item Individuals age with velocity $n$, so that the physical age at time $t$ of an individual born at time $c$ is $a=n(t-c)$.
\item The intrinsic death rate of an individual with trait $x\in \X$ and age $a\in \R_+$ is given by $\ n\, r(x, a)+d(x,a)$.  The competition between individuals $(x,a)$ and $(y,\alpha)$ is described by  the value $U((x,a), (y,\alpha))$ of a kernel $U$.  In a population described by the measure $X\in \mathcal{M}_F(\X\times \R_+)$, the total interaction on an individual $(x,a)$ is thus:
\begin{equation}
XU(x,a)=\int_{\X\times \R_+}U((x,a),(y,\alpha))X(dy,d\alpha),
\end{equation}
and its total death rate is
 $\ n\, r(x,a)+d(x,a)+XU(x,a)$.
\end{itemize}

\begin{hyp}\label{hyptaux}
\begin{enumerate}
\item The birth and death rates $b$ and $d$ are continuous on $ \X\times \R_+$ and  bounded respectively  by $\bar{b}$ 	and $\bar{d}$.

\item The function $r$ is continuous on  $\X\times \R_+$. There exist a positive constant $\bar{r}$ and a non-negative real function $\underline{r}$ such that  $\ \forall (x,a)\in \X\times \R_+,\  \underline{r}(a)\leq |r(x,a)|\leq \bar{r}$ with
    \begin{equation}
    \int_0^{+\infty}\underline{r}(a)da=+\infty.\label{conditionminorationr}
    \end{equation}

\item The competition kernel $U$ is continuous on $(\X\times \R_+)^2$ and  is bounded by $\bar{U}$.\hfill $\Box$
\end{enumerate}
\end{hyp}

Assumption \ref{hyptaux}-(2) implies that any individual $i$ from the population $X^n_t$ has a finite lifetime that is stochastically upper-bounded by a random variable $D^n_i(t)$ with survival function
\begin{equation}
S^n(\ell)=\P(D^n_i(t)>\ell)=\exp\Big(-\int_0^\ell n\, \underline{r}(nu)du\Big),\label{survie}
\end{equation}where we recall that the aging velocity is equal to $n$.\\
 If  the competition kernel  $U$ is positive on $(\X\times \R_+)^2$, it can model a competition  of the logistic type: the more important the size of the population is and the higher the death rate by competition is. For examples of such kernels we refer to \cite{meleardtran}.

\begin{ex}Let us illustrate the condition \eqref{conditionminorationr}.
\begin{enumerate}
\item If the function $r$ is lower bounded by a positive constant $\underline{r}$, then \eqref{conditionminorationr} is satisfied and so is \eqref{survie}, with exponential random variable $D^n_i(t)$ of parameter $n\underline{r}$.
\item Another example is when the trait $x$ is linked to the rate of metabolism, which measures the energy expended by individuals. Ageing may result from toxic by-products of the metabolism and we can define a biological age, $xa$. If $x\in [x_1,x_2]$ with $x_1, x_2>0$ and if we define $r(x,a)=xa$, then Condition \eqref{conditionminorationr} is satisfied with $\underline{r}(a)=x_1a$. The example of Section \ref{sectionex2} deals with this case.
\item If we consider $r(x,a)=\gamma/(1+a)$ with $\gamma \in (0,1)$, then \eqref{conditionminorationr} is also satisfied and the probability of observing an age higher than $a$ is equivalent to $a^{-\gamma}$ when $a$ tends to infinity. Such cases with distributions in the domain of attraction of a stable law, but without interaction, have been considered for instance in \cite{fleischmannvatutinwakolbinger}.
\end{enumerate}
\end{ex}

\begin{hyp}\label{tauxmutation}
For any $x\in\X$, the mutation kernel $\pi^n(x,dh)$ has its support in $\X-\{x\}=\{h\in \R^d \,|\,x+h\in \X\}$.
We consider two cases:

\begin{enumerate}
\item The trait space $\X$ is a compact subset of $\mathbb{R}^d$
 and  there exists a generator $A$ of a Feller semi-group on $\Co_b(\X,\R)$ with domain $\mathcal{D}(A)$ dense in $\Co_b(\X,\R)$ such that:
\begin{equation}
\forall f\in \mathcal{D}(A),\, \quad \lim_{n\rightarrow +\infty} \sup_{ x\in \X} \left|n\int_{\X-\{x\}} \big(f(x+h)-f(x)\big)\pi^n(x,dh) - Af(x)\right| =0.\label{condnoyaumutation}
\end{equation}

\item The trait space  $\X$ is a closed subset of $\mathbb{R}^d$ and we assume in addition that
  there exists $\ell_1\geq \ell_0\geq 2$ with $\Co_b^{\ell_1}(\X,\R) \subset   \mathcal{D}(A)$  and $\forall f\in \Co_b^{\ell_1}(\X,\R)$, $\forall x\in {\cal X}$.
\begin{equation} \label{estA}
|Af(x)| \leq  C \sum_{\substack{|k|\leq \ell_0 \\
k=(k_1,\dots,k_d)}}|D^k f (x)|\end{equation}
and
 \begin{equation}
 \sup_{ x\in \X} \left|n\int_{\X-\{x\}} \big(f(x+h)-f(x)\big)\pi^n(x,dh) - Af(x)\right| \leq \varepsilon_n \sum_{\substack{|k|\leq \ell_1 \\
k=(k_1,\dots,k_d)}}\|D^k f\|_\infty ,\label{estnoyaumutation}
\end{equation}
where $ D^{k}f(x)=\partial_{x_1}^{k_1}\dots\partial_{x_d}^{k_d}f(x)$,
 $\varepsilon_n$ is a sequence tending to $0$ as $n$ tends to infinity and $C$ is a constant.
\end{enumerate}
\end{hyp}

\begin{rque}
Both Assumptions \eqref{condnoyaumutation}
 and \eqref{estnoyaumutation} describe small mutation steps.
 The stronger hypothesis \eqref{estnoyaumutation} is required when  $\X$ is not compact, to obtain the tightness in the proof of Theorem \ref {propconvergence}.
 \end{rque}

\begin{ex}\label{exemple_noyau}Let us give some examples of mutation kernels satisfying  (\ref{condnoyaumutation}) or \eqref{estA} and \eqref{estnoyaumutation}.

\me
1.  In the case where $\X= [x_1, x_2]$,  the mutation kernel  $\pi^n(x,dh)$ is a Gaussian distribution with mean $0$ and variance $\sigma^2/n$, conditioned to $ [x_1-x, x_2 -x]$. In this case, elementary computation shows that for $f\in \Co^2_b([x_1,x_2],\R)$ such that $f'(x_1)= f'(x_2)=0$,
$Af(x)={\sigma^2\over 2} f''(x),$
which satisfies (\ref{condnoyaumutation}).

\me
2. In the case where $\X=\R^d$,  a possible choice of  mutation kernel $\pi^n(x,dh)$ is a Gaussian distribution with mean $0$ and covariance matrix $\Sigma(x)/n$, with $\Sigma(x) = (\Sigma_{ij}(x), 1\leq i,j\leq d)$. The generator $A$ is given for $f\in \Co^2_b(\mathbb{R}^d,\R)$  by $Af(x)={1\over 2} \sum_{i,j=1}^d \Sigma_{ij}(x) \partial_{ij}^2 f(x).$
If the function $\Sigma$ is bounded, then Assumption \eqref{estA} is fulfilled. If moreover, the third moments of  $\pi^n(x,dh)$ are bounded (in $x$), then  \eqref{estnoyaumutation} is satisfied.\hfill $\Box$
\end{ex}

\medskip

\noindent Let us now describe the generator  $\ L^n\ $ of the
$\mathcal{M}_F(\X\times \R_+)$-valued Markov process $X^n$, which  sums the aging phenomenon and  the ecological
dynamics of the population.
  As developed in Dawson \cite{dawson}
Theorem 3.2.6, the set of cylindrical functions defined for each
$\mu\in \mathcal{M}_F(\X\times \R_+)$ by $F_\varphi(\mu)=F(\langle
\mu,\varphi\rangle)$, with $F\in \Co^1_b(\R,\R)$ and $\varphi\in
\Co^{0,1}_b(\X\times \R_+,\R)$, generates the set of bounded
measurable functions on $\mathcal{M}_F(\X\times \R_+)$. For such function, for $\mu \in  \mathcal{M}_F(\X\times \R_+)$,
\begin{multline}
  L^nF_\varphi(\mu)=
n   \langle \mu,  \partial_a \varphi(.)\rangle F'_\varphi(\mu)\\
+
n \int_{\X\times \R_+} \left[\big(nr(x,a)+d(x,a)+\mu U(x,a)\big)
  \Big(F_\varphi\big(\mu-\frac{1}{n}\delta_{(x,a)}\big)-F_\varphi(\mu)\Big)\right.\\
  + \left. \big(nr(x,a)+b(x,a)\big)\int_{\R^d}
  \Big(F_\varphi\big(\mu+\frac{1}{n}\delta_{(x+h,0)}\big)-F_\varphi(\mu)\Big)K^n(x,a, dh)\right]\mu(dx,da),
\end{multline}
where \begin{equation}
K^n(x, a, dh)=p(x,a)\ \pi^n(x,dh) +(1- p(x,a))\ \delta_0(dh).\label{def_K^n}
\end{equation}

 \me  Under the condition $\sup_{n\in \N^*}\mathbb{E}\left(\langle X^n_0,1\rangle\right)<+\infty$, it has been proved  in M\'el\'eard-Tran \cite{meleardtran}  (see also the case without age in Champagnat-Ferri\`ere-M\'{e}l\'{e}ard \cite{champagnatferrieremeleard2} and the case without trait in Tran \cite{trangdesdev}), that there exists for any $n$,  a c\`ad-l\`ag Markov process with generator $L^n$, which  can be obtained as solution of a stochastic differential equation driven by a Point Poisson measure. Trajectorial uniqueness also holds for this equation. The construction provides an  exact individual-based simulation algorithm (see \cite{ferrieretran}).

 \medskip
 \noindent
 A slight adaptation of the proofs in  \cite{champagnatferrieremeleard2}  allows us to get the
\begin{prop}\label{prop_usefulresults} (i) Under Assumptions \ref{hyptaux}, and if
\begin{equation}\label{moment}\sup_{n\in \N^*}\E(\langle X^n_0,1\rangle^{3})<+\infty,\end{equation}
 then for all $T>0$,
\begin{align}
\sup_{n\in \N^*}\sup_{t\in [0,T]}\E\Big(\langle X^n_t,1\rangle^3\Big)<+\infty \mbox{ and } \sup_{n\in \N^*}\E\Big(\sup_{t\in [0,T]}\langle X^n_t,1\rangle^2\Big)<+\infty.\label{estimee_momentp}
\end{align}
(ii) Moreover, for $n\in \N^*$ and a test function $
\varphi\in \Co^{0,1}_b(\X\times \R_+,\R)$, the process $M^{n,\varphi}$ defined by
\begin{multline}
M^{n,\varphi}_t=\langle X^n_t,\varphi\rangle-  \langle X^n_0,\varphi\rangle -
n\, \int_0^t \langle X^n_s,\partial_a \varphi(x,a)\rangle\, ds\\
- \int_0^t \int_{\X\times \R_+} \Big(\big(nr(x,a)+b(x,a)\big)\int_{\R^d} \varphi(x+h,0) K^n(x, a, dh)\\
-\big(nr(x,a)+d(x,a)+X^n_s U(x,a)\big)\varphi(x,a)\Big)
X^n_s(dx,da)\,ds\label{PBM}\end{multline}
is a square integrable martingale started at 0 with quadratic variation:
\begin{multline}
\langle M^{n,\varphi}\rangle_t=  \frac{1}{n}\int_0^t \int_{\X\times \R_+} \Big(
\big(nr(x,a)+b(x,a)\big)\int_{\R^d} \varphi^2(x+h,0) K^n(x, a, dh)\\
+ \big(nr(x,a)+ d(x,a)+X^n_sU(x,a)\big)\varphi^2(x,a)\Big)
X^n_s(dx,da)\,ds.\label{crochetPBM}
\end{multline}
\end{prop}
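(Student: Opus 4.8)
The plan is to derive both assertions from the pathwise construction of $X^n$ as the unique solution of a Poisson-driven stochastic differential equation recalled above from \cite{meleardtran}, adapting the moment estimates of \cite{champagnatferrieremeleard2} while keeping careful track of the powers of $n$ carried by the rates. Since each reproduction event (mutant or not) adds exactly one individual and each death removes one, the counting process $N^n=\langle nX^n,1\rangle$ is stochastically dominated by a Yule-type pure birth process in which every individual branches at rate $n\bar r+\bar b$; for each fixed $n$ this process is non-explosive and has finite moments of all orders on $[0,T]$. This legitimizes the standard localization by $\tau_k=\inf\{t:\langle X^n_t,1\rangle>k\}$ and guarantees that the martingales below are genuine (not merely local) martingales.

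For \eqref{estimee_momentp}, apply the generator $L^n$ to $\mu\mapsto\langle\mu,1\rangle^3$ (after localization). The transport term drops because $\partial_a 1=0$. A birth changes $\langle\mu,1\rangle$ by $+1/n$ and a death by $-1/n$, and $(\langle\mu,1\rangle\pm\tfrac1n)^3-\langle\mu,1\rangle^3=\pm\tfrac3n\langle\mu,1\rangle^2+\tfrac3{n^2}\langle\mu,1\rangle\pm\tfrac1{n^3}$. The total $nr$-contributions to the birth rate and to the death rate are both equal to $n^2\langle\mu,r\rangle$, so the leading terms $\pm 3n\langle\mu,r\rangle\langle\mu,1\rangle^2$ cancel and leave an $O(\langle\mu,1\rangle^2)$ remainder; the $b$-part of the birth rate contributes at most $3\bar b\langle\mu,1\rangle^3$ plus lower order, and the $d$- and competition parts of the death rate are nonpositive at leading order and may be discarded. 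Hence $L^n\langle\mu,1\rangle^3\le C(1+\langle\mu,1\rangle^3)$ with $C$ independent of $n$, and Gronwall's lemma (then removing the localization by Fatou's lemma, using non-explosion) yields the first bound in \eqref{estimee_momentp} from \eqref{moment}. For the pathwise bound, use the $\varphi\equiv 1$ instance of the decomposition \eqref{PBM}: its finite-variation part has derivative $\langle X^n_s,b-d\rangle-\langle X^n_s,X^n_sU\rangle\le\bar b\langle X^n_s,1\rangle$, while \eqref{crochetPBM} gives $\langle M^{n,1}\rangle_t=\tfrac1n\int_0^t\langle X^n_s,2nr+b+d+X^n_sU\rangle\,ds$. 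Bounding $\sup_{s\le t}\langle X^n_s,1\rangle$ by $\langle X^n_0,1\rangle+\bar b\int_0^t\langle X^n_s,1\rangle\,ds+\sup_{s\le t}|M^{n,1}_s|$, squaring, taking expectations, applying Doob's $L^2$ inequality to the martingale term and controlling $\E[\langle M^{n,1}\rangle_t]$ by the already-established third-moment bound, a final Gronwall argument gives the second bound in \eqref{estimee_momentp}.

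For (ii), \eqref{PBM} is the evolution equation for $\langle X^n_t,\varphi\rangle$, i.e. Dynkin's formula applied to the cylindrical function $F_\varphi=\langle\cdot,\varphi\rangle$ (after localization and approximation of the identity by $\Co^1_b$ functions): since $F'_\varphi\equiv 1$ and $F_\varphi(\mu\pm\tfrac1n\delta_{(x,a)})-F_\varphi(\mu)=\pm\tfrac1n\varphi(x,a)$, the expression $L^nF_\varphi(\mu)$ coincides with the integrand of the drift in \eqref{PBM}, and $M^{n,\varphi}$ is a true martingale by the fixed-$n$ moment bounds of the first paragraph. For the bracket \eqref{crochetPBM}, note that $M^{n,\varphi}$ is a purely discontinuous martingale with the same jumps as $\langle X^n,\varphi\rangle$: $+\tfrac1n\varphi(x+h,0)$ at a birth, with intensity $(nr(x,a)+b(x,a))K^n(x,a,dh)X^n_s(dx,da)\,ds$, and $-\tfrac1n\varphi(x,a)$ at a death, with intensity $(nr(x,a)+d(x,a)+X^n_sU(x,a))X^n_s(dx,da)\,ds$; since $\langle M^{n,\varphi}\rangle$ is the compensator of $\sum_{s\le\cdot}(\Delta M^{n,\varphi}_s)^2$, integrating the squared jumps against these intensities gives exactly \eqref{crochetPBM} (equivalently, apply Dynkin to $\mu\mapsto\langle\mu,\varphi\rangle^2$ and subtract the square of \eqref{PBM}). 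Square-integrability, uniformly in $n$, follows from $\langle M^{n,\varphi}\rangle_t\le\|\varphi\|_\infty^2\int_0^t\big(2\bar r\langle X^n_s,1\rangle+\tfrac1n(\bar b+\bar d)\langle X^n_s,1\rangle+\tfrac1n\bar U\langle X^n_s,1\rangle^2\big)\,ds$ together with part (i).

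The only point requiring care beyond \cite{champagnatferrieremeleard2} is the presence of the $O(n)$ individual rates $nr$, which make the aggregate jump rates of order $n^2$: one must first secure non-explosion and finite ($n$-dependent) moments before localizing, and then exploit the cancellation of the leading $nr$-contributions to the birth and death of total mass, which is precisely what keeps the $p$-th moment bounds uniform in $n$ — without it $\E[\langle X^n_t,1\rangle^p]$ would blow up. The unbounded transport term $n\langle X^n_s,\partial_a\varphi\rangle$ is harmless here because in (i) it is tested against $\varphi\equiv 1$; it is in the proof of Theorem \ref{propconvergence} that it becomes the genuine difficulty. I expect the main effort to be bookkeeping: making sure the Gronwall constants do not secretly depend on $n$.
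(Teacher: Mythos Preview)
Your proof is correct and is precisely the ``slight adaptation of the proofs in \cite{champagnatferrieremeleard2}'' that the paper invokes in lieu of a written argument; the paper gives no further details beyond this citation. You have correctly identified the only nontrivial point in the adaptation, namely that the $O(n)$ contributions $n r(x,a)$ to the individual birth and death rates cancel exactly in the drift of $\langle\mu,1\rangle^p$, leaving bounds that are uniform in $n$ and amenable to Gronwall.
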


\noindent Notice that in (\ref{PBM}), the mutation rate is hidden in the kernel $K^n$:
 \begin{multline}  \big(nr(x,a)+b(x,a)\big)\int_{\R^d} \varphi(x+h,0) K^n(x, a, dh)=  \big(nr(x,a)+b(x,a)\big)(1-p(x,a))  \varphi(x,0)\\
 + \big(nr(x,a)+b(x,a)\big) p(x,a) \int_{\R^d} \varphi(x+h,0) \pi^n(x,dh).
 \end{multline}



\section{Superprocess limit}\label{sectionsuperprocessus}

We now investigate the limit when $n$ increases to $+\infty$. In the limit, we obtain a continuum of individuals in which the individualities are lost. It is in particular difficult to keep track of the age-distribution when $n$ tends to infinity. Because of the non-local branching (a mother of age $a>0$ gives birth to a daughter of age $0$) and because the aging velocity tends to infinity, it is impossible to obtain directly the uniform tightness on $\D(\R_+,\MF(\X\times \R_+))$ of the sequence of laws of the measure-valued processes $(X^n_.(dx,da))_{n\in \N^*}$, as it can be observed considering \eqref{PBM}. Indeed, assuming that the function $\varphi$ only depends on $a$, the term of the form
 $$ \int_0^t \int_{\X\times \R_+} nr(x,a)\, (\varphi(0)-\varphi(a))\,
X^n_s(dx,da)ds$$
cannot be tight if $r(x,a)$ is bounded and $X^n$ is tight.
Therefore, we will be led to firstly show the uniform tightness of the trait marginal of the process $X^n$ and then to prove that in the limit,
 an averaging phenomenon  appears for the age dynamics. Indeed,  this  "fast" evolving component stabilizes in an equilibrium that depends on the dynamics of the "slow" trait component.

We generalize  to measure-valued processes, averaging techniques of Ball \etal \cite{ballkurtzpopovicrempala}, Kurtz \cite{kurtzaveraging} for diffusion processes.  A specificity in our case is that the fast-scaling is related to  time, since age is involved.
 In addition, notice that the competition between individuals creates a large dependence between the age and  trait distributions. At our knowledge this dependence has never been investigated before in the literature.

\bi
 Let us introduce the marginal $\bar{X}_t^n(dx) $ of $X^n_t(dx,da)$  defined for any bounded and measurable function $f$ on $\X$ and for any $t\in \R_+$ by
\begin{equation}
\int_{\X} f(x)\bar{X}_t^n(dx)=\int_{\X\times \R_+} f(x)X_t^n(dx,da).
\end{equation}

\noindent
  Our main result states the convergence of the sequence $(\bar{X}^n)_{n\in \N^*}$ to a nonlinear super-process. The nonlinearity remains at the slow time scale in  the growth rate, which is preserved in this asymptotics. Moreover,  fast mutations are compensated by  small mutation steps. Fast births and deaths   provide stochastic fluctuations in the limit.

\begin{theorem}\label{propconvergence} Assume Hypotheses \ref{hyptaux} and \ref{tauxmutation}, (\ref{moment}) and assume that there exists $ X_0\in \mathcal{M}_F(\X\times \R_+)$ such that $\, \lim_{n\rightarrow +\infty} X^n_0 = X_0$ in $(\mathcal{M}_F(\X\times \R_+),w)$, the limit being in probability for the sake of simplicity.

For any $x\in \X$, let us introduce the age probability density
\begin{align}
\widehat{m}(x,a)=\frac{\exp\big(-\int_0^a r(x,\alpha)d\alpha\big)}{\int_0^{+\infty}\exp\big(-\int_0^a r(x,\alpha)d\alpha\big)da}.\label{mchapeau}
\end{align}

\noindent Then, for each $T>0$,  the sequence $(\bar{X}^n)_{n\in\N^*}$ converges in law in $\D([0,T],(\mathcal{M}_F(\X),w))$
to the unique superprocess $\bar{X}\in \Co([0,T],\mathcal{M}_F(\X))$ such that for any function $f \in \mathcal{D}(A)$,\\
\begin{align}
M^{f}_t=  \langle \bar{X}_t,f\rangle -\langle \bar{X}_0,f\rangle
-&\int_0^t \int_{\X} \Big(
 \widehat{(p\,r)}(x)Af(x)\nonumber\\
+&\big[\widehat{b}(x)-\big(\widehat{d}(x)+\bar{X}_s\widehat{U}(x)\big)\big]f(x)\Big)\bar{X}_s(dx)\, ds\label{martingalepremiercas}
\end{align}is a square integrable martingale with quadratic variation:
\begin{align}
\langle M^f\rangle_t= \int_0^t \int_{\X}2\widehat{r}(x)f^2(x)\bar{X}_s(dx)\, ds.\label{crochetmartingalepremiercas}
\end{align}
Here, any $\widehat{\psi}(x)$ is defined for a bounded function $\psi(x,a)$ by  \begin{align*}
\widehat{\psi}(x)= & \int_{\R_+}\psi(x,a)\widehat{m}(x,a)da,\end{align*}
and $\bar{X}_t \widehat{U}(x)$ is given by
$$ \bar{X}_t\widehat{U}(x)= \int_{\X}\left(\int_{\R_+}\int_{\R_+} U((x,a),(y,\alpha))\widehat{m}(y,\alpha)d\alpha\ \widehat{m}(x,a)da\right)\bar{X}_t(dy).$$
\hfill $\Box$
\end{theorem}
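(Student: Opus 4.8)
The plan is to establish tightness of the trait marginals, to identify the limiting age profile by a slow--fast averaging argument generalizing Kurtz \cite{kurtzaveraging} and Ball \etal \cite{ballkurtzpopovicrempala}, and finally to invoke uniqueness for the limiting superprocess martingale problem.

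\textbf{Step 1 (semimartingale decomposition).} Apply \eqref{PBM} to $\varphi(x,a)=f(x)$ with $f\in\mathcal D(A)$: the aging term $n\langle X^n_s,\partial_a\varphi\rangle$ disappears, and writing $\int f(x+h)K^n(x,a,dh)=f(x)+p(x,a)\int(f(x+h)-f(x))\pi^n(x,dh)$ one sees that the $O(n)$ contributions of births and deaths cancel (the demographic balance carried by $nr(x,a)$), leaving
\begin{align*}
\langle\bar X^n_t,f\rangle=\langle\bar X^n_0,f\rangle &+\int_0^t\int_{\X\times\R_+}\Big(r(x,a)\,p(x,a)\,Af(x)+\big(b(x,a)-d(x,a)-X^n_sU(x,a)\big)f(x)\Big)X^n_s(dx,da)\,ds\\
&+R^n_f(t)+M^{n,f}_t,
\end{align*}
where $R^n_f(t)$ gathers the $rp$-weighted error $n\int(f(x+h)-f(x))\pi^n(x,dh)-Af(x)$ and an $O(1/n)$ birth term; by Assumption \ref{tauxmutation} and the moment bounds \eqref{estimee_momentp}, $\sup_{t\le T}|R^n_f(t)|\to0$ in $L^1$. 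Likewise \eqref{crochetPBM} yields $\langle M^{n,f}\rangle_t=\int_0^t\int_{\X\times\R_+}2r(x,a)f^2(x)X^n_s(dx,da)\,ds+O(1/n)$. The central difficulty is already visible: both the drift and the bracket still depend on the age variable $a$, which is \emph{not} controlled pathwise by $\bar X^n$.

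\textbf{Step 2 (tightness and occupation measures).} From \eqref{estimee_momentp} and the Aldous--Rebolledo criterion applied to $(\langle\bar X^n,f\rangle)_n$ for $f$ in $\{1\}$ and a countable dense subset of $\mathcal D(A)$, together with a standard tightness criterion for measure-valued processes, $(\bar X^n)$ is tight in $\D([0,T],(\MF(\X),w))$; when $\X$ is non-compact, ruling out escape of mass to infinity needs the quantitative mutation bounds \eqref{estA}--\eqref{estnoyaumutation}, which is why the stronger assumption is imposed. The occupation measures $\Gamma^n(ds,dx,da):=X^n_s(dx,da)\,ds$ on $[0,T]\times\X\times\R_+$ and $\Gamma^{n,(2)}(ds,dx,da,dy,d\alpha):=X^n_s(dx,da)X^n_s(dy,d\alpha)\,ds$ on $[0,T]\times(\X\times\R_+)^2$ are tight (their total masses have finite expectation by \eqref{estimee_momentp}), as are the martingales $M^{n,f}$. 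Along a subsequence, $(\bar X^n,\Gamma^n,\Gamma^{n,(2)},(M^{n,f})_f)$ converges in law to $(\bar X,\Gamma,\Gamma^{(2)},(M^f)_f)$; since the time--trait projections of $\Gamma^n$ and $\Gamma^{n,(2)}$ are $\bar X^n_s(dx)\,ds$ and $\bar X^n_s(dx)\bar X^n_s(dy)\,ds$, the limits disintegrate as $\Gamma(ds,dx,da)=\rho_s(x,da)\,\bar X_s(dx)\,ds$ and $\Gamma^{(2)}(ds,dx,dy,da,d\alpha)=\rho^{(2)}_s(x,y,da,d\alpha)\,\bar X_s(dx)\bar X_s(dy)\,ds$ with probability kernels $\rho_s,\rho^{(2)}_s$.

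\textbf{Step 3 (averaging --- the heart of the proof --- and identification).} To identify $\rho$, apply \eqref{PBM} to $\varphi(x,a)=f(x)g(a)$ with $g\in\Co^1_c(\R_+,\R)$, retain only the $O(n)$ terms (aging and the $nr$ parts of the birth and death rates), divide by $n$, and let $n\to\infty$: $\frac1n(\langle X^n_t,\varphi\rangle-\langle X^n_0,\varphi\rangle-M^{n,\varphi}_t)\to0$ in $L^2$ by \eqref{estimee_momentp} and \eqref{crochetPBM}, and all slow terms are $O(1/n)$, so $\int_0^t\int_{\X\times\R_+}f(x)\big(g'(a)+r(x,a)(g(0)-g(a))\big)\Gamma(ds,dx,da)=0$ for all $t$. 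Hence for a.e. $s$ and $\bar X_s$-a.e. $x$, $\rho_s(x,\cdot)$ is a stationary probability measure for the fast age flow, i.e. $\int_{\R_+}\big(g'(a)+r(x,a)(g(0)-g(a))\big)\rho_s(x,da)=0$ for all $g\in\Co^1_c(\R_+,\R)$; by \eqref{conditionminorationr} the weight $a\mapsto\exp(-\int_0^a r(x,\alpha)d\alpha)$ is integrable, and an integration by parts shows this forces $\rho_s(x,da)=\widehat m(x,a)\,da$. The same computation with two-point test functions $\varphi(x,a)\psi(y,\alpha)$, using that on the fast time scale the two age coordinates evolve independently (the competition kernel $U$ and the rates $b,d$ acting only at the slow scale), gives $\rho^{(2)}_s(x,y,da,d\alpha)=\widehat m(x,a)\,da\,\widehat m(y,\alpha)\,d\alpha$. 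Now in the decomposition of Step 1 the drift equals $\int_{[0,t]\times\X\times\R_+}(rpAf+(b-d)f)\,d\Gamma^n-\int_{[0,t]\times(\X\times\R_+)^2}U((x,a),(y,\alpha))f(x)\,d\Gamma^{n,(2)}$; the integrands are bounded and continuous (Assumption \ref{hyptaux} and $Af\in\Co_b(\X,\R)$), and $\bar X$ is continuous since the jumps of $\bar X^n$ are $O(1/n)$, so one passes to the limit and, inserting $\Gamma$ and $\Gamma^{(2)}$, recovers exactly the drift of \eqref{martingalepremiercas} with $\widehat{(p\,r)}$, $\widehat b$, $\widehat d$ and $\bar X_s\widehat U$; likewise $\langle M^{n,f}\rangle$ converges to \eqref{crochetmartingalepremiercas}, and $M^f$ is a continuous square-integrable martingale by uniform integrability from \eqref{estimee_momentp}. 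Every limit point therefore solves \eqref{martingalepremiercas}--\eqref{crochetmartingalepremiercas}.

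\textbf{Step 4 (uniqueness and conclusion).} The martingale problem \eqref{martingalepremiercas}--\eqref{crochetmartingalepremiercas} is that of a superprocess with bounded continuous branching rate $2\widehat r(x)$, motion generator $\widehat{(p\,r)}(x)A$ (a bounded continuous time-change of the $A$-motion) and logistic interaction drift $-\bar X_s\widehat U(x)$; since $\bar X_s\mapsto\bar X_s\widehat U(x)$ is bounded and Lipschitz, uniqueness follows by a Gronwall argument reducing to the linear Dawson--Watanabe case, as in Champagnat \etal \cite{champagnatferrieremeleard2}. All subsequential limits coincide with this unique superprocess, hence the whole sequence $(\bar X^n)$ converges in $\D([0,T],(\MF(\X),w))$ to it, and the limit lies in $\Co([0,T],\MF(\X))$ by Step 3. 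The main obstacle is Step 3 for $\Gamma^{(2)}$: proving that the density-dependent competition does not destroy the conditional independence of the equilibrium age profiles, so that the product kernel $\widehat m\otimes\widehat m$ --- hence the clean form of $\bar X_s\widehat U$ --- survives in the limit; a secondary technical point is the control of the trait support when $\X$ is not compact.
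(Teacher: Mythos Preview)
Your overall architecture---tightness of the trait marginals, averaging via the occupation measure $\Gamma^n$, identification of the limiting martingale problem, and uniqueness---matches the paper's proof. Two points deserve comment, one a genuine gap and one a difference in route.

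\textbf{A gap in the tightness of $\Gamma^n$.} You assert that $(\Gamma^n)$ and $(\Gamma^{n,(2)})$ are tight because ``their total masses have finite expectation by \eqref{estimee_momentp}''. This is not sufficient: the state space $[0,T]\times\X\times\R_+$ is non-compact in the age coordinate, so you must also show that no mass escapes to $a=+\infty$, i.e.\ that $\sup_n\E\big[\int_0^T X^n_s(\X\times[A,\infty))\,ds\big]\to0$ as $A\to\infty$. Nothing in \eqref{estimee_momentp} gives this. The paper fills this hole (Lemmas \ref{lemme1} and \ref{lemme2}) by a coupling argument: the lifelength of each individual is stochastically dominated, uniformly in trait, by a random variable with survival function $S^n(\ell)=\exp(-\int_0^{n\ell}\underline r(u)\,du)$, and condition \eqref{conditionminorationr} is precisely what makes $\P(nD^n_i>A)=\exp(-\int_0^A\underline r(u)\,du)\to0$. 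A separate argument handles the individuals already present at time $0$, using the assumed weak convergence of $X^n_0$. You invoke \eqref{conditionminorationr} only in Step~3 to normalize $\widehat m$; it is needed earlier, here.

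\textbf{The two-particle occupation measure versus the paper's route.} Your introduction of $\Gamma^{n,(2)}$ and the separate identification of $\rho^{(2)}=\widehat m\otimes\widehat m$ is correct in principle but more laborious than necessary, and the sentence ``the same computation with two-point test functions'' hides real work: one has to apply It\^o's formula to $\langle X^n,\varphi\rangle\langle X^n,\psi\rangle$, check that the bracket term is $O(1)$ hence negligible after dividing by $n$, and then argue that the stationary probability for the product generator $L_x\oplus L_y$ on $\R_+^2$ is unique (the product $\widehat m(x,\cdot)\otimes\widehat m(y,\cdot)$). The paper avoids this entirely. It proves the stronger statement (Proposition~\ref{prophomogeneisation2}) that for $dt$-a.e.\ $t$ the time-marginal $X^n_t(dx,da)$ itself converges weakly to $\widehat m(x,a)\,da\,\bar X_t(dx)$; since $\mu\mapsto\mu\otimes\mu$ is weakly continuous, the quadratic competition term $\int\!\!\int U((x,a),(y,\alpha))f(x)\,X^n_t(dx,da)X^n_t(dy,d\alpha)$ then passes to the limit directly, and the form of $\bar X_s\widehat U$ falls out without a two-particle analysis. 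Your approach works, but the paper's is shorter and sidesteps exactly the difficulty you flag as ``the main obstacle''.

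\textbf{Uniqueness.} Here you genuinely diverge: you propose a Gronwall/perturbative reduction to the linear Dawson--Watanabe problem \`a la \cite{champagnatferrieremeleard2}, whereas the paper removes the nonlinear drift $-\bar X_s\widehat U$ by a Dawson--Girsanov change of measure (Theorem~7.2.2 of \cite{dawson}) and then appeals to the Laplace/cumulant characterization of Roelly--Rouault \cite{roellyrouault} for the resulting branching superprocess. Both are standard; the Girsanov route has the advantage of not needing a Lipschitz estimate on the competition term.
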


\noindent
Theorem \ref{propconvergence} states that in the limit,
 an averaging phenomenon happens and the  "fast" age component finally submits to the dynamics of the "slow" trait component.  Since the fast-scaling (involving age) is   related to the time, the stable age distribution $\widehat{m}(x,a)da$ given in \eqref{mchapeau}
 is obtained  for each trait $x$ as the  long time limit in the age-structured population where all coefficients except $\ r(x,a)$ are zero.


\me
Before proving this slow-fast limit theorem, let us insist on the main difficulty created by the competition mechanism. Indeed,   the branching property fails and  it impedes the  use of Laplace-transform techniques, as it had almost systematically been done in the past papers studying  particle pictures with age-structures.
Our model  generalizes the age-structure population process studied in Athreya \etal \cite{athreyaathreyaiyer}, Bose and Kaj \cite{bosekaj}, in which  birth and death rates are equal to a constant $\lambda$.  In that case, the limiting  behaviour  of the renormalized critical birth and death process appears as  a particular case of Theorem  \ref{propconvergence}  with $\widehat{m}(x,a)da=\lambda e^{-\lambda a}da$. In  Dawson \etal \cite{dawsongorostizali},  Dynkin \cite{dynkin91}, and   Kaj and Sagitov \cite{kajsagitov}, the age dependence  is modelled through an additive functional of the motion
process. In that way, the age "accumulates" along the lineage. 
In our case, the age is set to zero at each birth, inducing a renewal phenomenon. The  life-length does not have a fixed probability distribution anymore, unless there is no interaction.
In \cite{bosekaj2}, the authors consider a  particle system with a different scaling, which favors large reproduction events. The limit in this case is not a superprocess anymore but  behaves as the solution of a McKendrick-Forster equation perturbed by random immigration events created by the large rare birth events.

\bi
The proof of Theorem \ref{propconvergence} is the aim  of  Section \ref{sectionsuperprocessus}. We firstly establish the tightness of the sequence
$ (\bar{X}^n)_{n\in \N^*}$ (Section \ref{sectiontension}).
To identify its limiting values $\bar X$, we need an intermediary step. We consider the measures $(X^n_{t}(dx,da)dt)_{n\in \N^*}$ and show that their limiting values are  equal to $\bar X_{t}(dx)\widehat{m}(x,a)da dt$. This implies that  $\bar X$ is  solution of the martingale problem  given in \eqref{martingalepremiercas}, \eqref{crochetmartingalepremiercas}.  Uniqueness in this martingale problem allows us to deduce the convergence of $ (\bar{X}^n)_{n\in \N^*}$. \\

\subsection{Tightness of $(\bar{X}^n)_{n\in \N^*}$}\label{sectiontension}

In this subsection, we shall prove that:

\begin{prop}\label{proptightness} Assume Hypotheses \ref{hyptaux} and \ref{tauxmutation}, and (\ref{moment}). If the sequence of laws of $(\bar{X}^n_0)_{n\in \N^*}$ is uniformly tight in $(\mathcal{M}_F(\X),w)$, then the sequence $(\mathcal{L}(\bar{X}^n))_{n\in \N^*}$ is uniformly tight in the space of probability measures on $\D([0,T],(\mathcal{M}_F(\X),w))$.
\end{prop}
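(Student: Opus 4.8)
The plan is to establish tightness in $\D([0,T],(\MF(\X),w))$ via the classical two-step criterion: first show tightness of the one-dimensional marginals $(\bar X^n_t)_{n}$ in $(\MF(\X),w)$ for each fixed $t$, then verify an Aldous-type condition on the moduli of continuity. Since $\X$ need not be compact, I would treat the weak topology by checking tightness in $(\MF(\X),v)$ plus a tightness-of-mass condition (no escape of mass to infinity), appealing to the moment estimates \eqref{estimee_momentp} from Proposition \ref{prop_usefulresults}: $\sup_n\E(\sup_{t\le T}\langle \bar X^n_t,1\rangle^2)<\infty$ controls the total mass uniformly, which handles both the compactness of the mass component and — combined with Hypothesis \ref{tauxmutation}-(2), the small-step mutation estimate — the no-escape-of-mass requirement when $\X=\R^d$. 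For fixed $t$, tightness of $\bar X^n_t$ in $(\MF(\X),v)$ follows from the mass bound because bounded sets of measures with bounded total mass are vaguely relatively compact.

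The core work is the Aldous/Rebolledo condition on the paths $t\mapsto\langle\bar X^n_t,f\rangle$. Note that $\langle\bar X^n_t,f\rangle=\langle X^n_t,\tilde f\rangle$ where $\tilde f(x,a)=f(x)$ does not depend on $a$, so in the semimartingale decomposition \eqref{PBM} the troublesome aging term $n\langle X^n_s,\partial_a\tilde f\rangle\,ds$ \emph{vanishes}, and crucially the large $nr(x,a)$ contributions in the birth and death parts cancel: the birth term with a mutation-free daughter gives $nr(x,a)(1-p(x,a))f(x)$, the death term gives $-nr(x,a)f(x)$, and the difference is $-nr(x,a)p(x,a)f(x)$ plus $nr(x,a)p(x,a)\int f(x+h)\pi^n(x,dh)$, whose sum is $nr(x,a)p(x,a)\int(f(x+h)-f(x))\pi^n(x,dh)$, which by \eqref{condnoyaumutation}/\eqref{estnoyaumutation} is of order $1$ (bounded by $\|r\|_\infty\|p\|_\infty(\|Af\|_\infty+o(1))$ using $f\in\mathcal D(A)$, resp.\ the $\ell_1$-derivative bound). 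Hence the finite-variation part of $\langle\bar X^n_t,f\rangle$ is, for $f\in\mathcal D(A)$ (or $\Co^{\ell_1}_b$ when $\X=\R^d$), a time integral of a term bounded in absolute value by $C(1+\langle X^n_s,1\rangle^2)$, which together with \eqref{estimee_momentp} gives the required uniform control of increments $\E(|V^{n,f}_{\tau+\delta}-V^{n,f}_\tau|\,|\,\mathcal F_\tau)\le C\delta$ over stopping times $\tau\le T$. For the martingale part, the bracket \eqref{crochetPBM} applied to $\tilde f$ reads $\frac1n\int_0^t\int(\cdots)f^2(x)X^n_s(dx,da)ds$; the dangerous $n r(x,a)f^2(x)$ term is multiplied by $1/n$ and so is of order $1$, giving $\langle M^{n,f}\rangle_{\tau+\delta}-\langle M^{n,f}\rangle_\tau$ bounded in conditional expectation by $C\delta(1+\sup_{s\le T}\langle X^n_s,1\rangle^2)$. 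Rebolledo's criterion then transfers tightness of the brackets and of the finite-variation parts to tightness of $M^{n,f}$ and hence of $\langle\bar X^n_\cdot,f\rangle$.

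To pass from tightness of each real-valued projection $\langle\bar X^n_\cdot,f\rangle$ to tightness of the measure-valued process itself, I would invoke the criterion of Roelly-Coppoletta (or M\'etivier/Jakubowski): it suffices that $(\langle\bar X^n_\cdot,f\rangle)_n$ be tight in $\D([0,T],\R)$ for $f$ ranging over a countable set that is dense in an appropriate sense — here $\mathcal D(A)$ (which is dense in $\Co_b(\X,\R)$ by Hypothesis \ref{tauxmutation}, and in the non-compact case the $\Co^{\ell_1}_b$ functions and compactly supported functions suffice to separate points and control mass), together with a compact-containment condition, which is exactly the uniform mass bound $\sup_n\E(\sup_{t\le T}\langle\bar X^n_t,1\rangle^2)<\infty$ plus the no-escape-of-mass estimate. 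Combining the pointwise tightness, the Aldous condition for each test function, and compact containment yields uniform tightness of $(\mathcal L(\bar X^n))_n$ in $\D([0,T],(\MF(\X),w))$.

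The main obstacle is the non-compact case $\X=\R^d$: there one must genuinely control the possible loss of mass at infinity in the weak topology, and this is where Hypothesis \ref{tauxmutation}-(2) enters — the mutation steps must be small enough (the $\eps_n$-estimate \eqref{estnoyaumutation} against high-order derivatives) that, testing against functions like $f_R(x)=\chi(|x|/R)$, the mutation contribution to the growth of $\langle\bar X^n_t,f_R\rangle$ is controlled uniformly in $n$ and $\to 0$ as $R\to\infty$; combined with the absence of transport in the trait variable at the relevant scale and the mass bound, this prevents escape of mass and secures tightness in $(\MF(\X),w)$ rather than merely $(\MF(\X),v)$. A secondary technical point is checking that the cancellation of the $O(n)$ terms is exact (it is, by the construction of $K^n$ in \eqref{def_K^n}) so that no stray $n$-order term survives in the decomposition of the trait-marginal — this is precisely what makes the trait marginal, unlike the full process $X^n$, tight in the path space.
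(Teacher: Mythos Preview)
Your proposal is correct and follows essentially the same route as the paper: the crucial cancellation of the $O(n)$ terms in the semimartingale decomposition for trait-only test functions, Aldous--Rebolledo for the real-valued projections $\langle\bar X^n_\cdot,f\rangle$ with $f\in\mathcal D(A)$, Roelly's criterion to get tightness in $\D([0,T],(\MF(\X),v))$, and then a mass-escape control via cutoff functions to upgrade to the weak topology in the non-compact case. The paper implements this last step through an iterative estimate (its Lemma~\ref{lemmeproptightness}), exploiting the pointwise bound \eqref{estA} so that $|Af_k|\le C f_{k-1}$, and then applies the M\'el\'eard--Roelly criterion (vague convergence plus convergence of total mass implies weak convergence), which is precisely a concrete realization of the compact-containment idea you sketch.
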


\begin{proof}
Recall firstly that for a measurable and bounded function $f$ on $\X$, the process
\begin{align}
M^{n,f}_t&=\langle X^n_t,f\rangle-  \langle X^n_0,f\rangle
- \int_0^t \int_{\X\times \R_+} \Big(\big(nr(x,a)+b(x,a)\big)\int_{\R^d} f(x+h) K^n(x, a, dh)\nonumber\\
&\hskip1cm -\big(nr(x,a)+d(x,a)+X^n_s U(x,a)\big)f(x)\Big)
X^n_s(dx,da)\,ds\nonumber\\
&=\langle X^n_t,f\rangle-  \langle X^n_0,f\rangle
- \int_0^t \int_{\X\times \R_+} \Big(\big(b(x,a) -d(x,a)-X^n_s U(x,a)\big)\, f(x)
 \nonumber\\
&\hskip1cm + \big(nr(x,a)+b(x,a)\big) p(x,a)\int_{\R^d} \big(f(x+h)-f(x)\big) \pi^n(x, dh)
\Big)
X^n_s(dx,da)\,ds\label{PBMx}\end{align}
is a square integrable martingale started at 0 with quadratic variation
\begin{align}
\langle M^{n,f}\rangle_t&=  \frac{1}{n}\int_0^t \int_{\X\times \R_+} \Big(
\big(nr(x,a)+b(x,a)\big)\int_{\R^d} f^2(x+h) K^n(x,dh)\nonumber\\
&\hskip1cm + \big(nr(x,a)+ d(x,a)+X^n_sU(x,a)\big)f^2(x)\Big)
X^n_s(dx,da)\,ds\nonumber\\
&=  \frac{1}{n}\int_0^t \int_{\X\times \R_+} \Big(
\big(2nr(x,a)+b(x,a)+ d(x,a)+X^n_sU(x,a)\big) f^2(x)
\nonumber\\
&\hskip1cm + \big(nr(x,a)+ b(x,a)\big) p(x,a) \int_{\R^d} \big(f^2(x+h) -f(x)\big)\pi^n(x,dh)\Big)
X^n_s(dx,da)\,ds.\label{crochetPBMx}
\end{align}

\noindent We divide the proof into several steps.\\

\noindent \textbf{Step 1} Firstly, we prove the uniform tightness of $(\mathcal{L}(\bar{X}^n))_{n\in \N^*}$ in the space of probability measures on $\D([0,T],(\mathcal{M}_F(\X),v))$.\\
Let us consider a continuous bounded function $f\in \mathcal{D}(A)$, and  show the  uniform tightness of
the sequence $(\langle \bar{X}^n_.,f\rangle)_{n\in \N^*}$  in $\D([0,T],\R)$. We remark that  for every fixed $t\in [0,T]$ and  $n>0$,
\begin{align}
\mathbb{P}(|\langle \bar{X}^n_t,f\rangle|>k)\leq \frac{\|f\|_\infty \, \E(\sup_{t\in [0,T]}\langle \bar{X}^n_t,1\rangle)}{k},\label{etape18}
\end{align}which tends to 0 as $k$ tends to infinity (cf.  \eqref{estimee_momentp}). This proves the tightness of the family of time-marginals $(\langle \bar{X}^n_t,f\rangle)_{n\in \N^*}$.
\noindent Denoting by
$A^{n,f}$  the finite variation process in the r.h.s. of (\ref{PBMx}) and thanks to Assumption \ref{tauxmutation}, we get for all stopping times $S_n<T_n<(S_n+\delta)\wedge T$,
that
\begin{multline}
\E(|A_{T_n}^{n,f}-A_{S_n}^{n,f}|)\leq  \delta\, \Big[\Big( (\|Af(s,.)\|_{\infty}+1)
\frac{\bar{r}}{2}+\|f\|_\infty(\bar{b}+\bar{d})\Big)\sup_{n\in \N^*}\E(\sup_{t\in [0,T]}\langle X^n_t,1\rangle)\\
+\|f\|_\infty \bar{U}\sup_{n\in \N^*}\E(\sup_{t\in [0,T]}\langle X^n_t,1\rangle^2)\Big].\label{etape20}
\end{multline}
The quadratic variation process \eqref{crochetPBMx} satisfies a similar inequality:
\begin{align}
 & \mathbb{E}(|\langle M^{n,f}\rangle_{T_n}-\langle M^{n,f}\rangle_{S_n}|)\nonumber\\
 \leq  &  \|f\|_\infty^2 \E\Big(\int_{S_n}^{T_n} \Big[2\bar{r}  \langle \bar{X}^n_s,1\rangle
 + \frac{\bar{b} \langle X^n_s,1\rangle}{n}+\frac{\bar{d}\langle \bar{X}^n_s,1\rangle + \bar{U}\langle \bar{X}^n_s,1\rangle^2}{n}\Big]ds\Big)\nonumber\\
 \leq &  \|f\|_\infty^2 \delta \;\Big[\Big(2\bar{r} +\frac{\bar{b}+\bar{d}}{n}\Big) \sup_{n\in \N^*}
 \E(\sup_{t\in [0,T]}\langle \bar{X}^n_t,1\rangle)+\frac{\bar{U}}{n}\sup_{n\in \N^*}\E(\sup_{t\in [0,T]}\langle \bar{X}^n_t,1\rangle^2)\Big].\label{etape21}
\end{align}
Then, for $\varepsilon>0$, $\eta>0$, a sufficiently large $n$ and small $\delta$, we have using \eqref{estimee_momentp}  and Assumption \ref{tauxmutation}, that
\begin{align}
\mathbb{P}(|A_{T_n}^{n,f}-A_{S_n}^{n,f}|>\eta)\leq \varepsilon\quad \mbox{ and }\quad \mathbb{P}(|\langle M^{n,f}\rangle_{T_n}-\langle M^{n,f}\rangle_{S_n}|>\eta)\leq \varepsilon. \label{aldous}
\end{align}
From \eqref{etape18}, \eqref{etape20}, \eqref{etape21} and the Aldous-Rebolledo criterion (see \eg \cite{joffemetivier} or \cite[Th. 1.17]{etheridgebook}), we obtain the uniform tightness of the sequence $(\langle \bar{X}^n_.,f\rangle)_{n\in \N^*}$  in $\D([0,T],\R)$. Thanks to  Roelly's criterion  \cite{roelly}, we conclude that $(\bar{X}^n)_{n\in \N^*}$ is uniformly tight in $\D([0,T],(\mathcal{M}_F(\X),v))$.\\

Let us denote by $\bar{X}$ a limiting process of $(\bar{X}^n)_{n\in \N^*}$. It is
almost surely (a.s.) continuous in $(\mathcal{M}_F(\X),v)$ since
\begin{equation}
\sup_{t\in \R_+}\sup_{f,\,\|f\|_\infty\leq 1} |\langle \bar{X}^n_t,f\rangle -\langle \bar{X}^n_{t_-},f\rangle |\leq \frac{1}{n}.\label{la_limite_est_continue}
\end{equation}

\noindent In the case where $\X$ is a compact subset of $\mathbb{R}^d$, the vague and weak  topologies coincide, which fails in the non-compact case. Nevertheless  the tightness in  $\D([0,T],(\mathcal{M}_F(\X),w))$ is needed to identify the limiting values of $(\bar{X}^n)_{n\in \N^*}$.

\medskip
\noindent \textbf{Step 2} Let us now concentrate on the case where $\X$ is unbounded and let us show the tightness of $(\bar{X}^n)_{n\in \N^*}$ in $\D([0,T],(\mathcal{M}_F(\X),w))$.
The same computation as in Step 1 for $f(x)=1$ implies that the sequence $(\langle \bar{X}^n,1\rangle)_{n\in \N^*}$ is uniformly tight in $\D([0,T],\R_+)$. As a consequence, it is possible to extract from $(\bar{X}^n)_{n\in \N^*}$ a subsequence $(\bar{X}^{u_n})_{n\in \N^*}$ such that:
\begin{itemize}
\item $(\bar{X}^{u_n})_{n\in \N^*}$ converges in distribution to $\bar{X}$ in $\D([0,T],(\mathcal{M}_F(\X),v))$,
\item $(\langle \bar{X}^{u_n},1\rangle)_{n\in \N^*}$ converges in distribution in $\D([0,T],\R_+)$.

\end{itemize}Let us now show that the limit of $(\langle \bar{X}^{u_n},1\rangle)_{n\in \N^*}$ is $\langle \bar{X},1\rangle$, empedding  a loss of mass in the limit. Indeed, as a consequence, a criterion in  M\'{e}l\'{e}ard and Roelly \cite{meleardroelly} will prove that $(\bar{X}^{u_n})_{n\in \N^*}$ converges in distribution to $\bar{X}$ in $\D([0,T],(\mathcal{M}_F(\X),w))$.

\medskip \noindent By simplicity, we will  again denote $u_{n}$  by $n$.

\medskip \noindent As in Jourdain-M\'el\'eard \cite{JM09}, we introduce a sequence of smooth functions $\psi_k$  defined on $\mathbb{R}_{+}$ and approximating ${\bf 1}_{\{u \geq k\}}$. For $k\in{\mathbb N}$,  let $\psi_k(u)=\psi(0\vee(u-(k-1))\wedge 1)$ where $\psi(y)=6y^5-15 y^4+10y^3$ is a nondecreasing  function such that $\psi(0)=\psi'(0)=\psi''(0)=1-\psi(1)=\psi'(1)=\psi''(1)=0$.
The function $u \mapsto \psi_k(u)$ is  nondecreasing on $\mathbb{R}_+$, equals  $0$ on $[0,k-1]$ and  $1$
 on the complement of $[0,k)$. In particular $\psi_0\equiv 1$. Moreover the sequence $(\psi_k)_{k\in \N^*}$ is nonincreasing, and satisfies for $u\geq 0$ and $p\geq 1$ that
 \begin{eqnarray}
 \label{fk}
 {\bf 1}_{\{u\geq k\}}&\leq &\psi_k(u)\leq {\bf 1}_{\{u\geq k-1\}};\\
 \label{ineq}\psi_k^{(p)}(u)&\leq& \sup_{u \in  [k-1,k]} \, |\psi_k^{(p)}(u)|\, {\bf 1}_{\{u\geq k-1\}}\leq \sup_{u\in  [k-1,k]} \, |\psi_k^{(p)}(u)|\, \psi_{k-1}(u).\nonumber
 \end{eqnarray}
The proof of the following lemma is postponed at the end of Proposition \ref{proptightness}'s proof. We define $f_{k}(x) = \psi_{k}(\Vert x\Vert)$, for all $x\in \X$.

\begin{lemme}\label{lemmeproptightness}
\label{controle-masse}Under the assumptions of Proposition \ref{proptightness},
$$\lim_{k\to+\infty}\limsup_{n\to+\infty}\E\left(\sup_{t\leq T}\langle \bar{X}^n_t,f_k\rangle\right)=0.$$
\end{lemme}

\bi \noindent  From Lemma \ref{lemmeproptightness}, we can deduce that \begin{equation}
   \lim_{k\to+\infty}\E\Big(\sup_{t\leq T}\langle \bar{X}_t,f_k\rangle\Big)=0.\label{tightlim}
\end{equation}
 Indeed, for $k\in{\mathbb N}$, the continuous and compactly supported functions $(f_{k,\ell}\stackrel{\rm def}{=}f_k(1-f_\ell))_{\ell\in{\mathbb N}}$ increase to $f_k$ as $\ell\to+\infty$. By continuity of $\nu\mapsto \sup_{t\leq T} \langle \nu_t,f_{k,\ell}\rangle$ on $\mathbb{D}([0,T], (M_F,v))$ and uniform integrability deduced from the uniform square moment estimates \eqref{estimee_momentp}, one has
$$\E\Big(\sup_{t\leq T}\langle \bar{X}_t,f_{k,\ell}\rangle\Big)=\lim_{n\to+\infty}\E\Big(\sup_{t\leq T}\langle \bar{X}^n_t,f_{k,\ell}\rangle\Big)\leq\liminf_{n\to + \infty}\E\Big(\sup_{t\leq T}\langle \bar{X}^n_t,f_{k}\rangle\Big).$$
Taking the limit $\ell\to + \infty$ in the left-hand-side by the monotone convergence theorem, one concludes   that for $k=0$
\begin{equation}
   \E\Big(\sup_{t\leq T}\langle \bar{X}_t,1\rangle\Big)=\E\Big(\sup_{t\leq T}\langle\bar{X}_t,f_0\rangle\Big)<+\infty,\label{contmass}
\end{equation} and   from  Lemma \ref{controle-masse}, that \eqref{tightlim} holds for any $k$.\\
As a consequence one may extract a subsequence of $(\sup_{t\leq T}\langle \bar{X}_t,f_k\rangle)_k$ that converges to $0$ a.s.,
and since the process $(\bar{X}_t)_{t\leq T}$ is  continuous   from $[0,T]$ into $(\mathcal{M}_F(\X),v)$, one deduces that it is also continuous   from $[0,T]$ into $(\mathcal{M}_F(\X),w)$.\\
We can now prove the convergence of $\langle X^{u_n},1\rangle$ to $\langle \bar{X},1\rangle$.
For $F$ a Lipschitz continuous and bounded function on  $\mathbb{D}([0,T],  \mathbb{R})$, we have
\begin{align*}
\limsup_{n\to + \infty}|\E\big(F(\langle  \bar{X}^n,1\rangle)&- F(\langle  \bar{X},1\rangle)\big)|
\leq \limsup_{k\to + \infty}\limsup_{n\to + \infty}|	\E\big(F(\langle  \bar{X}^n,1\rangle)- F(\langle  \bar{X}^n,1-f_k\rangle)\big)| \\
&+
\limsup_{k\to + \infty}\limsup_{n\to + \infty}|\E\big(F(\langle  \bar{X}^n,1-f_k\rangle)- F(\langle  \bar{X},1-f_k\rangle)\big)| \\
&+\limsup_{k\to + \infty}|\E\big(F(\langle  \bar{X},1-f_k\rangle)- F(\langle  \bar{X},1\rangle)\big)|.
\end{align*}
Since $|F(\langle \nu,1-f_k\rangle)- F(\nu ,1\rangle)|\leq C\sup_{t\leq T}\langle \nu_t,f_k\rangle$ by Lipschitz property,  the first and the third terms in the r.h.s. are equal to $0$ respectively according to Lemma \ref{controle-masse} and to \eqref{tightlim}. The second term is $0$ by continuity of $\nu\mapsto \langle\nu,1-f_k\rangle$ in $\mathbb{D}([0,T],(\M_F(\X),v))$. \\
This ends the proof of Proposition \ref{proptightness}.
  \end{proof}

\medskip
\noindent
\begin{proof}[Proof of Lemma \ref{controle-masse}]
Firstly, let us show that for each $t\in [0,T]$,
\be
\label{tfixe} \lim_{k\to + \infty}\limsup_{n\to + \infty}\E\left(\langle \bar{X}^n_t,f_k\rangle\right)=0.\ee
The boundedness of $r$  and Assumption \ref{tauxmutation}-2 ensure the existence of a sequence $(\varepsilon_n)_{n\in \N^*}$ converging to $0$ such that
\begin{align}
    \E( \langle\bar{X}^n_t,f_k\rangle)&\leq \E(\langle  \bar{X}^n_0,f_k\rangle)+\bar{b}\int_0^t \E(\langle  \bar{X}^n_s,f_k\rangle) ds+\varepsilon_n\int_0^t \E(\langle  \bar{X}^n_s,1\rangle) ds\notag\\
    &+\E\Big(\int_0^t\int_{\R^d\times \R_+}   r(x,a)p(x,a) \,A f_k(x) X^n_s(dx,da)ds\Big),
\end{align}
and we have by \eqref{estA} and \eqref{fk}
 $$\Big|\int_{\mathbb{R}^d \times \R_+} r(x,a)p(x,a)\, Af_k(x) X^n_s(dx,da)\Big|\leq \bar{r} \sum_{|\ell|\leq \ell_0} \|D^{\ell}f_{k}\|_{\infty}\ \langle  \bar{X}^n_s,f_{k-1}\rangle.$$
\noindent Since moreover, the sequence $(f_k)_{k\in \N^*}$ is non-increasing,  $\langle  \bar{X}^n_s,f_k\rangle \leq \langle  \bar{X}^n_s,f_{k-1}\rangle$ and
 there is a constant $C>0$ independent of $k\geq 2$ such that
\begin{align}
   \E(\langle  \bar{X}^n_t,f_k\rangle)&\leq \E(\langle  \bar{X}^n_0,f_k
   \rangle) +C\int_0^t \E(\langle \bar{X}^n_s,f_{k-1}\rangle) ds+\varepsilon_n\int_0^t\E(\langle  \bar{X}^n_s,1\rangle ) ds.\end{align}
Let $\mu^{n,k}_s=\E\left(\langle  \bar{X}^n_s,f_k\rangle\right)\leq \mu^n_{s}=\E\left(\langle  \bar{X}^n_s,1\rangle\right)$ which is bounded uniformly in $n\in \N^*$ and $s\in[0,T]$ according to \eqref{estimee_momentp}.
There exist two positive constants $C_1$ and $C_2$ such that
\begin{align*}
 \mu^{n,k}_t\leq \mu^{n,k}_0+C_{1}\int_0^t\mu^{n,k-1}_s ds+C_2\varepsilon_n.\end{align*}
 Iteration of this inequality yields \begin{align*}
   \mu^{n,k}_t &\leq \sum_{\ell=0}^{k-1
   }\mu^{n,(k-\ell)}_0\frac{(C_{1}t)^\ell}{\ell !}+\frac{(C_{1}\int_0^t\mu^n_sds)^{k}}{k!}+\varepsilon_n\, C_{2}\, \sum_{\ell=0}^{k-1}\frac{(C_{1}t)^\ell}{\ell !}\\
&\leq \mu^{n,\lfloor k/2\rfloor}_0e^{C_{1}t}+\mu^n_0\sum_{\ell=\lfloor k/2\rfloor +1}^{+\infty}\frac{(C_{1}t)^\ell}{\ell !}+\frac{(C_{1}'t)^{k}}{(k)!}+\varepsilon_n\, C_{2}\,e^{C_{1}t}.
\end{align*}
where we used the monotonicity of $\mu^{n,k}_0$ w.r.t. $k$ for the second inequality.
Given the moment condition (\ref{moment}), the assumption of tightness in $(\mathcal{M}_F(\X),w)$ of the initial conditions $(\bar{X}^n_0)_{n\in \N^*}$ is equivalent to
\begin{equation}
\lim_{k\rightarrow +\infty} \limsup_{n\rightarrow +\infty} \mu^{n,k}_0=0.
\end{equation}
Hence
$$\lim_{k\rightarrow +\infty}\limsup_{n\to + \infty}\mu^{n,k}_t\leq
\sup_{n\in \N^*} \mu^n_0 \lim_{k\rightarrow +\infty}
\sum_{\ell=\lfloor k/2\rfloor +1}^{+\infty}\frac{(C_{1}t)^\ell}{\ell !}+\lim_{k\rightarrow +\infty}\frac{(C_{1}'t)^{k}}{(k)!}.
$$
We deduce immediately that
\begin{equation}\lim_{k\to + \infty}\limsup_{n\to + \infty}\E\left(\langle \bar{X}^n_t,f_k\rangle\right)=\lim_{k\to + \infty}\limsup_{n\to + \infty}\mu^{n,k}_t =  0.\end{equation}

   Let us now consider  the martingale $M^{n,k}_t$  defined by \eqref{PBMx} with $f_k$ instead of $f$, and with quadratic variation given in \eqref{crochetPBMx}.  Similar arguments as above allow us to prove that
   $$ \E(\langle M^{n,k}\rangle_t)\leq C_{1}  \int_0^t \E(\langle X^n_s,f_{k-1}\rangle) ds +\varepsilon_{n} C_{2} \int_0^t \E(\langle X^n_s,1\rangle) ds.$$
   Thus, using
   that $f_k\leq 1$, Doob's inequality,   \eqref{tfixe},  \eqref{estimee_momentp} and the dominated convergence theorem, we get
   $$ \lim_{k\to + \infty}\limsup_{n\to + \infty}\E\Big(\sup_{{t\leq T}} \vert M^{n,k}_t\vert\Big)=0.$$
  Let us now come back to the process $\langle  \bar{X}^n,f_k\rangle$.
  As before, we can get
     \begin{align}
   \langle  \bar{X}^n_t,f_k\rangle&\leq \langle  \bar{X}^n_0,f_k\rangle + M_t^{n,k}+\bar{b}\int_0^t\langle  \bar{X}^n_s,f_k\rangle ds+\varepsilon_n\int_0^t\langle  \bar{X}^n_s,1\rangle ds\notag\\
   &+\int_0^t\int_{\R^d\times \R_+}   r(x,a)p(x,a) \,A f_k(x) X^n_s(dx,da)ds \nonumber\\
&\leq \langle  \bar{X}^n_0,f_k\rangle + M_t^{n,k}+C_{1}\int_0^t\langle  \bar{X}^n_s,f_{k-1}\rangle ds+\varepsilon_n\, C_{2}\, \int_0^t\langle  \bar{X}^n_s,1\rangle ds,\end{align}
   for constants $C_{1}$ and $C_{2}$.
Let $\alpha^{n,k}_t=\E\left(\sup_{s\leq t}\langle  \bar{X}^n_s,f_k\rangle \right)$ and $\alpha^n_t=\E\left(\sup_{s\leq t}\langle  \bar{X}^n_s,1\rangle\right)$ which is bounded uniformly in $n\in \N^*$ and $t\in[0,T]$ according to \eqref{estimee_momentp}. One deduces that
\begin{align*}
 \alpha^{n,k}_t\leq \alpha^{n,k}_0+C_{1}\int_0^t\mu^{n,k-1}_s ds+C_2\varepsilon_n+\E\Big(\sup_{t\leq T} \vert M^{n,k}_t\vert \Big).\end{align*}
An iteration as before allows us to prove that
$$\lim_{k\to + \infty}\limsup_{n\to + \infty} \E\Big(\sup_{t\leq T} \langle \bar{X}^n_{t}, f_{k}\rangle\Big) = \lim_{k\to + \infty}\limsup_{n\to + \infty} \alpha^{n,k}_t =0,$$
which concludes the proof of Lemma \ref{lemmeproptightness} and thus the one of Proposition \ref{proptightness}.
\end{proof}

\subsection{Identification of the limiting values}\label{sectionidentification}

To obtain the convergence stated in Theorem \ref{propconvergence}, we  show that the limiting value $\bar{X}$ of the uniformly tight sequence $(\bar{X}^n)_{n\in \N^*}$ is unique. We establish a martingale problem satisfied by $\bar{X}$ in which there are integration terms with respect to the equilibrium (\ref{mchapeau}) involved in the averaging phenomenon for the ages. The uniqueness of the solution to the martingale problem is then proved.

\subsubsection{Averaging phenomenon}\label{sectionaveragingphenomenon}

\noindent We begin with establishing the form of the limiting values of the time-marginal distributions $(X^n_t(dx,da))_{n\in \N^*}$ for $t\in [0,T]$. Since the sequence $(\bar{X}^{n})_{n\in \N^*}$ is
 uniformly tight, there exists a subsequence of $(X^n_t(dx,da))_{n\in \N^*}$, with trait-marginals converging in law to a limiting value  $\bar{X}$, that by simplicity,  we denote again  by $(X^n_t(dx,da))_{n\in \N^*}$.\\

\noindent  We have already explained why  the uniform tightness of the sequence   $(t \mapsto X^n_t(dx,da))_{n\in \N^*}$ in $\mathbb{D}([0,T],\mathcal{M}_F(\X\times \R_+))$ cannot hold. However, following  Kurtz \cite{kurtzaveraging}, we will prove the uniform tightness of the sequence of random measures
\begin{equation}
\Gamma^n(dt,dx,da)=X^n_t(dx,da)dt\label{defGamman}
 \end{equation}on $\mathcal{M}_F([0,T]\times \X\times \R_+)$. Proceeding in this way allows us to escape the difficulties created by the degeneracies due to the rapid time scale for age, when one tries to follow individual paths.


\begin{prop}\label{prophomogeneisation2}
Under the assumptions of Theorem \ref{propconvergence}, the sequence $(\Gamma^n)_{n\in \N^*}$ converges in law to $\ \bar{X}_t(dx) \widehat{m}(x,da)dt\,$ in $\mathcal{M}_F([0,T]\times \X\times \R_+)$.\\
As a consequence, for $dt$-almost every (a.e.) $t\in [0,T]$, the sequence $(X^n_t(dx,da))_{n\in \N^*}$ converges weakly to $ \widehat{m}(x,da)\bar{X}_t(dx)$, with $ \widehat{m}$  defined in (\ref{mchapeau}).
\end{prop}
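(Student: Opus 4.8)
The plan is to follow Kurtz's averaging programme \cite{kurtzaveraging} in this measure-valued setting: prove relative compactness of the occupation measures $(\Gamma^n)_{n\in\N^*}$ in $\MF([0,T]\times\X\times\R_+)$, identify any limit point $\Gamma$ via the rescaled version of the martingale problem \eqref{PBM}, show that the resulting stationarity relation forces $\Gamma(dt,dx,da)=\bar X_t(dx)\widehat m(x,da)\,dt$, and finally deduce the statement on time-marginals. We work along a subsequence (still denoted $n$) for which $\bar X^n\to\bar X$ in $\D([0,T],(\MF(\X),w))$, which exists by Proposition \ref{proptightness}. \emph{Tightness of $(\Gamma^n)$.} The total mass $\langle\Gamma^n,1\rangle=\int_0^T\langle X^n_t,1\rangle\,dt$ is tight by \eqref{estimee_momentp}, so it suffices to control, uniformly in $n$, the mass of $\Gamma^n$ outside compact subsets of $\X\times\R_+$. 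The mass at large traits, $\E\int_0^T\langle X^n_t,\ind_{\{\|x\|\geq k\}}\rangle\,dt$, is controlled by Lemma \ref{controle-masse}; the mass at large ages is controlled by the short-lifespan estimate \eqref{survie}: by Assumption \ref{hyptaux}-(2) and since the aging velocity is $n$, an individual alive at time $t$, born after time $0$, with age $\geq A$, has survived with probability at most $\exp\big(-\int_0^A\underline r(v)\,dv\big)\to 0$ as $A\to\infty$ uniformly in $n$; the contribution of the individuals present at time $0$ is dealt with using \eqref{conditionminorationr} (which gives $\int_{a_0}^{+\infty}\underline r=+\infty$ for each $a_0$) together with the tightness of $(X^n_0)_{n\in\N^*}$ on the short-time layer $t\in[0,A/n]$. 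Hence $(\Gamma^n)_{n\in\N^*}$ is tight, and one may extract a further subsequence along which $(\bar X^n,\Gamma^n)$ converges in law to some $(\bar X,\Gamma)$; by Skorokhod's representation theorem, assume this convergence holds almost surely.

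\emph{Identification of $\Gamma$.} Apply \eqref{PBM} with a product test function $\varphi(x,a)=f(x)g(a)$, where $g\in\Co^1_c(\R_+,\R)$ and $f$ is smooth ($f\in\mathcal D(A)$ when $\X$ is compact, $f\in\Co^{\ell_1}_b(\X,\R)$ otherwise), and divide by $n$. By \eqref{crochetPBM} and the boundedness of $r$, $\langle M^{n,\varphi}\rangle_t$ remains bounded in $n$, so $M^{n,\varphi}_t/n\to 0$ in $L^2$; and $\frac{1}{n}\big(\langle X^n_t,\varphi\rangle-\langle X^n_0,\varphi\rangle\big)$, together with the term $\frac{1}{n}\int_0^t\!\int\big(b\int\varphi(x+h,0)K^n-(d+X^n_sU)\varphi\big)X^n_s\,ds$, tend to $0$ by \eqref{estimee_momentp}. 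What remains is
\[
\int_0^t\!\langle X^n_s,\partial_a\varphi\rangle\,ds+\int_0^t\!\!\int_{\X\times\R_+}\! r(x,a)\Big(\int_{\R^d}\varphi(x+h,0)\,K^n(x,a,dh)-\varphi(x,a)\Big)X^n_s(dx,da)\,ds .
\]
By the small-mutation assumption \eqref{condnoyaumutation} (resp.\ \eqref{estnoyaumutation}), $\int_{\R^d}f(x+h)\,\pi^n(x,dh)\to f(x)$ uniformly in $x$, hence $\int_{\R^d}\varphi(x+h,0)\,K^n(x,a,dh)\to\varphi(x,0)$ uniformly; since $\partial_a\varphi$ and $(x,a)\mapsto r(x,a)(\varphi(x,0)-\varphi(x,a))$ are bounded and $\Gamma$-a.e.\ continuous, the convergence $\Gamma^n\to\Gamma$ lets us pass to the limit above and obtain, a.s.\ and for every $t\leq T$,
\[
\int_{[0,t]\times\X\times\R_+}\Big(\partial_a\varphi(x,a)+r(x,a)\big(\varphi(x,0)-\varphi(x,a)\big)\Big)\,\Gamma(ds,dx,da)=0 .
\]
Since the $(dt,dx)$-marginal of $\Gamma^n$ is $\bar X^n_t(dx)\,dt$ and converges to $\bar X_t(dx)\,dt$, that of $\Gamma$ equals $\bar X_t(dx)\,dt$; disintegrate $\Gamma(dt,dx,da)=\bar X_t(dx)\,\rho_{t,x}(da)\,dt$ with each $\rho_{t,x}$ a probability measure on $\R_+$. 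As the identity above holds for every $t$, we get for $dt$-a.e.\ $t$ and $\bar X_t$-a.e.\ $x$ that $\int_{\R_+}\big(g'(a)+r(x,a)(g(0)-g(a))\big)\,\rho_{t,x}(da)=0$ for all $g\in\Co^1_c(\R_+,\R)$.

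\emph{Conclusion.} This stationarity relation says that $\rho_{t,x}$ is invariant for the generator $\mathcal A_x g(a)=g'(a)+r(x,a)(g(0)-g(a))$ of the piecewise-deterministic process that ages at unit speed and jumps back to $0$ at rate $r(x,\cdot)$. Testing with $g$ supported in $(0,+\infty)$ gives $\rho_{t,x}'=-r(x,\cdot)\,\rho_{t,x}$ in $\mathcal D'((0,+\infty))$; multiplying by the $\Co^1$ function $\exp\big(\int_0^a r(x,\alpha)\,d\alpha\big)$ shows that, on $(0,+\infty)$, $\rho_{t,x}$ has a density proportional to $\exp\big(-\int_0^a r(x,\alpha)\,d\alpha\big)$ and no atom or singular part. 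Testing next with a general $g\in\Co^1_c(\R_+,\R)$ and integrating by parts, the boundary term exactly cancels $g(0)\int_0^{+\infty}r(x,a)\exp\big(-\int_0^a r\big)\,da=g(0)$ (finiteness of the normalising constant in \eqref{mchapeau} forces $\int_0^{+\infty}r(x,\alpha)\,d\alpha=+\infty$), leaving $g'(0)\,\rho_{t,x}(\{0\})=0$, so $\rho_{t,x}$ has no atom at $0$ either; normalisation then yields $\rho_{t,x}(da)=\widehat m(x,da)$ — the classical identification of the invariant law of the age process with the residual-lifetime distribution \eqref{mchapeau}. Thus every subsequential limit of $(\Gamma^n)$ equals $\bar X_t(dx)\widehat m(x,da)\,dt$, which together with the tightness proves that $\Gamma^n\to\bar X_t(dx)\widehat m(x,da)\,dt$ in law. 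Finally, testing $\Gamma^n$ against functions of the form $\ind_{[0,s]}(t)\psi(x,a)$ — which are $\Gamma$-a.e.\ continuous since $\Gamma$ charges no slice $\{s\}\times\X\times\R_+$ — and comparing with the pathwise convergence of $\bar X^n$, a standard sub-subsequence disintegration argument yields the weak convergence of $X^n_t(dx,da)$ to $\widehat m(x,da)\bar X_t(dx)$ for $dt$-a.e.\ $t$.

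The main obstacle is the identification step. One must simultaneously (i) kill the rescaled martingale and the ecological (intrinsic-death and competition) terms using the moment bounds \eqref{estimee_momentp} and the bracket \eqref{crochetPBM}, (ii) replace the mutation kernel $K^n$ — which also transports mass in the trait variable — by its $\delta_0$-limit \emph{uniformly} in $x$, which is precisely where \eqref{condnoyaumutation}/\eqref{estnoyaumutation} and, in the noncompact case, the restriction to smooth $f$ enter, and (iii) solve the stationary adjoint equation carefully enough to exclude atoms and singular parts and so pin down $\rho_{t,x}=\widehat m(x,\cdot)$. The tightness bookkeeping for old individuals is routine but genuinely relies on the non-integrability \eqref{conditionminorationr}.
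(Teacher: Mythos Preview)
Your proposal is correct and follows essentially the same programme as the paper: Kurtz-type tightness of the occupation measures $\Gamma^n$, division of the martingale problem \eqref{PBM} by $n$ to extract the stationary relation \eqref{equationstationnaire}, disintegration $\Gamma=\rho_{t,x}(da)\bar X_t(dx)\,dt$, and identification $\rho_{t,x}=\widehat m(x,\cdot)$. The paper packages the tightness in two lemmas (Lemma~\ref{lemme1} for each $X^n_t$, then Lemma~\ref{lemme2} for $\Gamma^n$ via the uniform-in-$t$ bounds), and your direct route to the tightness of $\Gamma^n$ is equivalent; your treatment of the individuals present at time~$0$ is a little hasty, though --- the phrase ``on the short-time layer $t\in[0,A/n]$'' does not quite capture the right split, and the paper's argument (introduce an intermediate threshold $\phi(A)\to\infty$ with $\int_{\phi(A)}^A\underline r\to\infty$, separate initial ages above and below $\phi(A)$, and combine with the weak tightness of $(X^n_0)$) is what is actually needed.

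The one genuine methodological difference is in solving the stationary equation. The paper (Lemma~\ref{lemmehomogeneisation}) chooses the antiderivative test function $\psi(a)=\int_a^{+\infty} f(\alpha)\,d\alpha$, which in one stroke yields absolute continuity of $\rho_{t,x}$ with an explicit density, and then reads off the ODE \eqref{PDEstationnaire}. You instead argue directly in $\mathcal D'((0,+\infty))$ that $\rho'=-r(x,\cdot)\rho$, integrate, and then rule out an atom at~$0$ by a boundary computation. Both are valid; the paper's choice is slicker (no distributional calculus, no separate atom argument), while yours makes the role of the generator $\mathcal A_x$ more transparent. Finally, the passage from convergence of $\Gamma^n$ to convergence of the time-marginals $X^n_t$ for $dt$-a.e.\ $t$ is stated loosely in both your write-up and the paper; it relies on the tightness of $(X^n_t)_n$ for each~$t$ (the paper's Lemma~\ref{lemme1}) together with the identification of the limit, and deserves one more sentence.
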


\noindent The proof of Proposition \ref{prophomogeneisation2} is inspired by   Kurtz \cite{kurtzaveraging}. To establish the the result, we need to consider the random measure defined in \eqref{defGamman}.\\
\noindent   Firstly, we prove the uniform tightness of  the sequence  $(X^n_t(dx,da))_{n\in \N^*}$, for fixed $t\in [0,T]$ (Lemma \ref{lemme1}), as well as the one of the sequence of measures $(\Gamma^n)_{n\in \N^*}$ (Lemma \ref{lemme2}), where the pathwise and individual  points of view have been forgotten. The techniques to disentangle the traits and individuals' time scales appear strikingly  in the proof of Lemma \ref{lemme1}, where different treatments are used for the trait marginal and for the ages, with the introduction of the individuals' lifelengths. Then, in the proof of Proposition  \ref{prophomogeneisation2},  a factor $n$ appears in \eqref{etape28}, when changing from the macroscopic  scale to the microscopic scale. The next part of the proof consists in identifying the limiting martingale problem.

\begin{lemme}\label{lemme1}
For $dt$-a.e. $t\in [0,T]$, the sequence $(X_t^n)_{n\in \N^*}$ is uniformly tight on $\mathcal{M}_F(\X\times \R_+)$.
\end{lemme}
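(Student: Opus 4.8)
\noindent\emph{Proof strategy.} The plan is to verify the two conditions that, through Prohorov's theorem, characterise uniform tightness of a sequence of finite measures on the (possibly non-compact) space $\X\times\R_+$: tightness of the total masses $\langle X^n_t,1\rangle$ in $\R_+$, and absence of escape of mass, i.e. $\lim_{R\to+\infty}\sup_n\E\big(\langle X^n_t,\ind_{\{\|x\|\geq R\}}\rangle\big)=0$ and $\lim_{R\to+\infty}\sup_n\E\big(\langle X^n_t,\ind_{\{a\geq R\}}\rangle\big)=0$; together with a compact exhaustion of $\X\times\R_+$ these yield, for such $t$, a compact set carrying all but $\varepsilon$ of the mass with probability $\geq1-\varepsilon$, uniformly in $n$. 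The first condition holds for \emph{every} $t\in[0,T]$: since $\langle X^n_t,1\rangle=\langle\bar X^n_t,1\rangle$ and $\sup_n\E\big(\sup_{t\leq T}\langle X^n_t,1\rangle^2\big)<+\infty$ by Proposition \ref{prop_usefulresults}, Markov's inequality concludes. The escape of mass in the \emph{trait} variable is also essentially at hand: the trait marginal of $X^n_t$ is $\bar X^n_t$, and Lemma \ref{controle-masse}, applied with $f_k(x)=\psi_k(\|x\|)$, gives $\lim_{k\to+\infty}\limsup_{n\to+\infty}\E\big(\sup_{t\leq T}\langle\bar X^n_t,f_k\rangle\big)=0$, whence $\lim_R\sup_n\E\big(\langle X^n_t,\ind_{\{\|x\|\geq R\}}\rangle\big)=0$ for every $t$.

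The substantial point is the escape of mass in the \emph{age} variable, and this is where the individuals' lifelengths come in. By Assumption \ref{hyptaux}-(2) the death rate of an individual of age $\alpha$ is at least $n\,\underline r(\alpha)$, and since ages move at velocity $n$, an individual born at time $s$ and alive at time $t$ has survived, against a death rate $\geq n\underline r$, up to physical age $n(t-s)$; hence, given its birth time, its probability of still being alive at $t$ is at most $\exp\!\big(-\int_0^{t-s}n\underline r(nu)\,du\big)=\exp\!\big(-\int_0^{n(t-s)}\underline r(v)\,dv\big)=S^n(t-s)$, which vanishes as $n(t-s)\to+\infty$ by \eqref{conditionminorationr} and \eqref{survie}. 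Writing $X^n_t=\frac1n\sum_i\delta_{(X_i(t),A_i(t))}$, an individual with $A_i(t)\geq R$ must have been born before $t-R/n$; I would bound $\E\big(\langle X^n_t,\ind_{\{a\geq R\}}\rangle\big)=\frac1n\E\big(\#\{i:\ A_i(t)\geq R\}\big)$ by compensating the birth point process — whose intensity at time $s$ is at most $(n\bar r+\bar b)\,N^n_{s^-}$ with $N^n_s=n\langle X^n_s,1\rangle$ — and multiplying by the survival bound above. The change of variables $v=n(t-s)$ absorbs the prefactor $n$ coming from the fast birth rate and leaves
$$\E\big(\langle X^n_t,\ind_{\{a\geq R\}}\rangle\big)\ \leq\ C\,\sup_n\E\big(\sup_{s\leq T}\langle X^n_s,1\rangle\big)\int_R^{+\infty}\exp\!\Big(-\int_0^v\underline r(u)\,du\Big)dv\ +\ (\text{initial term}),$$
the initial term being the mass of the $X^n_0$-individuals that have reached age $R$, controlled by the tightness of $(X^n_0)_n$ on $\mathcal M_F(\X\times\R_+)$. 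Both terms tend to $0$ as $R\to+\infty$ uniformly in $n$ (here one uses, besides \eqref{conditionminorationr}, the integrability of the survival function, which is precisely what makes $\widehat m$ in \eqref{mchapeau} well defined). As this estimate is most transparent in its $t$-integrated form $\E\int_0^T\langle X^n_t,\ind_{\{a\geq R\}}\rangle\,dt\leq\varepsilon_R$ with $\varepsilon_R\to0$, one obtains the age bound for $dt$-almost every $t\in[0,T]$ — which is all that is needed afterwards, since Lemma \ref{lemme2} and Proposition \ref{prophomogeneisation2} use $X^n_t$ only integrated against $dt$.

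Collecting the three estimates, for $dt$-a.e. $t\in[0,T]$ the sequence $(X^n_t)_n$ meets the tightness criterion on $(\mathcal M_F(\X\times\R_+),w)$. I expect the age direction to be the main obstacle: because ages travel at speed $n$ and because of the non-local branching (an old mother produces an age-$0$ daughter), there is neither a workable pathwise tightness for $X^n$ nor a naive moment control, and the fast birth rate $n\,r(x,a)$ produces, per unit of macroscopic time, an $O(1)$ amount of newborn mass that could a priori pile up at large ages. The cure is the lower bound \eqref{conditionminorationr} on the cumulated death rate, used through the survival function \eqref{survie}: it forces the mass surviving to large ages to decay, the spurious factor $n$ being exactly compensated by the time-rescaling $v=n(t-s)$ built into the aging velocity.
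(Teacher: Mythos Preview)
Your strategy is sound and genuinely different from the paper's. Where the paper works with \emph{probability} bounds --- coupling each individual's lifelength with an i.i.d.\ variable of survival function $S^n$, restricting to the event $\{N^n_t\leq nN\}$, and then applying Bernstein's inequality to $\sum_{i=1}^{nN}\ind_{\{nD^n_i>A\}}$ --- you work with \emph{expectation} bounds, compensating the birth point process and multiplying by the survival upper bound. Your route is more analytic and arguably cleaner; the paper's is more probabilistic and yields directly the form $\P\big(X^n_t((K\times[0,A])^c)>\varepsilon\big)<\varepsilon$ of the tightness criterion.

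There is, however, one point where your argument is strictly less general than the paper's. Your displayed bound for the age tail is $C\int_R^{+\infty}\exp\big(-\int_0^v\underline r(u)\,du\big)\,dv$, which requires the $\underline r$-survival function to be \emph{integrable}. Assumption~\ref{hyptaux}(2) only gives $\int_0^{+\infty}\underline r(a)\,da=+\infty$, i.e.\ the survival function tends to zero, not that it is integrable; Example~2.2(3) with $r(x,a)=\gamma/(1+a)$, $\gamma\in(0,1)$, is a case where it is not. Your justification that integrability ``is precisely what makes $\widehat m$ well defined'' is not quite right either: well-definedness of $\widehat m(x,\cdot)$ needs $\int_0^{+\infty}\exp\big(-\int_0^a r(x,\alpha)\,d\alpha\big)\,da<+\infty$ for each $x$, which is implied by, but not equivalent to, integrability of the $\underline r$-survival function. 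The paper's Bernstein argument avoids this entirely because it only uses $\exp\big(-\int_0^A\underline r\big)\to 0$: on $\{N^n_t\leq nN\}$ the fraction of individuals with large age is controlled by a binomial tail with parameter $p_A=\exp(-\int_0^A\underline r)$, and no integration over $A$ is needed. (Your $t$-integrated form does not rescue this: the integral $\int_0^T\int_R^{nt}\exp(-\int_0^v\underline r)\,dv\,dt$ still diverges when the survival function is not integrable.)

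In short: under the mild extra hypothesis that $\int_0^{+\infty}\exp\big(-\int_0^v\underline r\big)\,dv<+\infty$ your argument goes through and is a legitimate alternative; under the paper's stated Assumption~\ref{hyptaux}(2) alone, you would need to switch to a probability bound in the age direction, which is what the paper's coupling/Bernstein step accomplishes. The treatment of the initial individuals also deserves a line of care (the paper splits according to whether $A_i(0)\leq\phi(A)$ and uses the weak convergence of $X^n_0$), but that part carries over to your framework without difficulty.
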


\begin{proof}
Let $\varepsilon>0$. Since the family $(\bar{X}^n_t)_{n\in \N^*}$ is tight, there exists a compact set $K\subset \R^d$ such that
\begin{equation}
\sup_{n\in \N^*}\P\big(\bar{X}^n_t(K^c)>\varepsilon\big)<\varepsilon.\label{eq1}
\end{equation}
Moreover, thanks to Point 2 of Assumption \ref{hyptaux} and using a coupling argument,
the life-lengths of the individuals in the population $X^n_t$ born after time 0 are dominated, uniformly in $x\in \X$, by independent random variables $D^n_i(t)$ with survival function $S^n$ defined in \eqref{survie}. Because the aging velocity is $n$, the ages of these individuals satisfy $A_i(t)\leq n \, D^n_i(t)$. For an individual $i$ alive at time 0, conditionally on the state at time 0, the remaining time it has to live can be dominated by an independent random variable $\Delta^n_i$ such that
$$\P\big(\Delta^n_i>\ell\big) =\exp\Big(-\int_0^\ell n \underline{r}(A_i(0)+nu) du\Big).$$ Thus, for $A>0$ and $n_0, N\in \N^*$,
\begin{align}
\sup_{n\geq n_0}\P\big(X^n_t((K\times [0,A])^c)>3\varepsilon\big)\leq & 
\sup_{n\geq n_0}\P\big(\bar{X}^n_t(K^c)>\varepsilon\big)+\sup_{n\geq n_0}\P\Big({1\over n}\sum_{i=1}^{N^n_t}\ind_{\{A_i(t)>A\}}>2\varepsilon\Big)\nonumber\\
\leq & \varepsilon+\sup_{n\geq n_0}\P\Big(\sum_{i=1}^{nN}\ind_{\{n\,D^n_i(t)>A\}}>n\varepsilon\Big)+\sup_{n\geq n_0}\P\big(N^n_t>nN\big)\nonumber\\
 & + \sup_{n\geq n_0} \P\Big(\sum_{i=1}^{N^n_0} \ind_{\{A_i(0)+n\Delta^n_i>A\}}>n\varepsilon\Big).\label{etape8}
\end{align}By (\ref{moment}), it is possible to
find $N$ such that:
\begin{equation}
\sup_{n\geq n_0}\P\big(N^n_t>nN\big)=\sup_{n\geq n_0}\P\big(\langle X^n_t,1\rangle>N\big)\leq
\frac{\sup_{n\geq n_0}\E\big(\sup_{t\in [0,T]}\langle X^n_t,1\rangle)}{N}\leq \varepsilon.\label{tightness1}
\end{equation}Let us consider the second term of \eqref{etape8}. Notice that
\begin{align*}
\E\big(\ind_{\{n\,D^n_i(t)>A\}}\big)=S^n(A/n)=\exp\Big(-\int_0^A \underline{r}(a)da\Big),
\end{align*}which converges to 0 when $A$ tends to infinity by \eqref{conditionminorationr}.
For $N$ as in \eqref{tightness1}, there hence exists $A_1$ sufficiently large so that for $A> A_1$, $S^n(A/n)<\varepsilon/2N$. Then
\begin{multline}
\P\Big(\sum_{i=1}^{nN}\ind_{\{D^n_i(t)>A/n\}}>n\varepsilon\Big)=
\P\Big(\sum_{i=1}^{nN}(\ind_{\{D^n_i(t)>A/n\}}-S^n(A/n))>n(\varepsilon-NS^n(A/n))\Big)\\
\leq
\P\Big(\sum_{i=1}^{nN}(\ind_{\{D^n_i(t)>A/n\}}-S^n(A/n))>n\varepsilon/2\Big)
\leq  \exp\left(-\frac{n\varepsilon^2}{8(N e^{-\int_0^A \underline{r}(a)da}(1-e^{-\int_0^A \underline{r}(a)da})+\varepsilon/3)}\right)\label{etape9}
\end{multline}by Bernstein's inequality (\eg \cite{shorackwellner} p.855). For a sufficiently large $n_1$ and for $n\geq n_1$, the r.h.s. of (\ref{etape9}) is smaller than $\varepsilon$.\\
Let us now upper bound the last term of \eqref{etape8}. Let $A\mapsto \phi(A)$ be such that $\phi(A)$ and $\int_{\phi(A)}^A \underline{r}(u)du$ tend to infinity when $A$ tends to infinity. We firstly notice that:
\begin{align}
\frac{1}{n}\sum_{i=1}^{N^n_0} \E\big(\ind_{\{A_i(0)+n\Delta^n_i>A\}}\, |\, X^n_0\big)= & \frac{1}{n}\sum_{i=1}^{N^n_0} 1\wedge e^{-\int_{A_i(0)}^A \underline{r}(u)du}=\int_{\X\times \R_+} 1\wedge e^{-\int_{a}^A \underline{r}(u)du} X^n_0(dx,da)\nonumber\\
\leq &  X^n_0(\X\times [\phi(A),+\infty))+ \langle X^n_0,1\rangle e^{-\int_{\phi(A)}^A \underline{r}(u)du}.\label{etape10}
\end{align} Recall that $X_0$ is the limit, in probability and for the weak convergence, of $(X^n_0)_{n\in \N^*}$. There hence exists $n_2(A)$ sufficiently large such that for $n\geq n_2(A)$, the right hand side of \eqref{etape10} is smaller than $$X_0(\X\times [\phi(A),+\infty))+ \langle X_0,1\rangle e^{-\int_{\phi(A)}^A \underline{r}(u)du}+\frac{\varepsilon}{4}$$with probability $1-\varepsilon$. By choice of $\phi(A)$, there exists $A_2$ sufficiently large such that such that for $A> A_2$, and $n\geq n_2(A_2)$ this upper bound is smaller than $\varepsilon/2$. For such $A$ and $n$, we have
\begin{align}
\P\Big(\sum_{i=1}^{N^n_0} \ind_{\{A_i(0)+n\Delta^n_i>A\}}>n\varepsilon\Big)
  \leq & \E\Big(\P\Big(\frac{1}{n}\sum_{i=1}^{N^n_0} \big(\ind_{\{A_i(0)+n\Delta^n_i>A\}}-1\wedge e^{-\int_{A_i(0)}^A \underline{r}(u)du}\big)>\frac{\varepsilon}{2}\, |\, X^n_0\Big)\Big)+\varepsilon\nonumber\\
  \leq & \E\Big(\exp\Big(-\frac{n \varepsilon^2}{2\langle X^n_0,1\rangle+8\varepsilon/3}\Big)\Big)+\varepsilon.\label{etape11}
\end{align}by applying Bernstein's inequality again. There exists $n_3$ sufficiently large so that for every $n\geq n_3$, the right hand side of \eqref{etape11} is smaller than $2\varepsilon$.\\
The tightness of $(X^n_t)_{n\in \N^*}$ is thus a consequence of (\ref{etape8}), (\ref{tightness1}), (\ref{etape9}) and \eqref{etape11}, with the choices of $A\geq \max(A_1,A_2)$ and $n_0\geq \max(n_1,n_2,n_3)$.
\end{proof}


\begin{lemme}\label{lemme2}
The family $(\Gamma^n)_{n\in \mathbb{N}}$ is tight in
$\mathcal{M}_F([0,T]\times \X\times \R_+)$.
\end{lemme}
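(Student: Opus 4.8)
The plan is to check, for the sequence $(\Gamma^n)_{n\in\N^*}$ of random finite measures on the Polish space $[0,T]\times\X\times\R_+$, the two conditions of Prokhorov's theorem: tightness of the total masses in $\R_+$, and a uniform compact containment estimate. The key simplification over Lemma \ref{lemme1} is that the time variable already ranges over the \emph{compact} interval $[0,T]$, so no regularity of $t\mapsto X^n_t$ is required; one only has to control the escape of mass in the $\X$ and $\R_+$ directions, and this has essentially been carried out inside the proof of Lemma \ref{lemme1}. The argument will then simply integrate those estimates in $t$ and de-randomize them.

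First, tightness of the total masses will be immediate: since $\langle\Gamma^n,1\rangle=\int_0^T\langle X^n_t,1\rangle\,dt\leq T\sup_{t\leq T}\langle X^n_t,1\rangle$, the estimate \eqref{estimee_momentp} and Markov's inequality give $\sup_{n\in\N^*}\P(\langle\Gamma^n,1\rangle>c)\to 0$ as $c\to+\infty$.

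For the compact containment, I would fix $\varepsilon>0$ and look for a compact set of the form $\mathcal{K}=[0,T]\times(K\cap\X)\times[0,A]$, with $K\subset\R^d$ compact and $A>0$; this set is indeed compact in $[0,T]\times\X\times\R_+$. Since $\Gamma^n(\mathcal{K}^c)=\int_0^T X^n_t\big((K\times[0,A])^c\big)\,dt$, Fubini's theorem (all integrands being nonnegative) and Markov's inequality yield
\[
\P\big(\Gamma^n(\mathcal{K}^c)>\varepsilon\big)\leq\frac1\varepsilon\int_0^T\E\Big(X^n_t\big((K\times[0,A])^c\big)\Big)\,dt,
\]
so it suffices to bound $\sup_{n\in\N^*}\sup_{t\leq T}\E\big(X^n_t((K\times[0,A])^c)\big)$ by a quantity that vanishes as $K$ and $A$ grow. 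I would then split $X^n_t((K\times[0,A])^c)\leq\bar X^n_t(K^c)+\frac1n\sum_{i=1}^{N^n_t}\ind_{\{A_i(t)>A\}}$ and treat the two pieces as in Lemma \ref{lemme1}. For the trait piece: when $\X$ is compact one just takes $K\supset\X$ and the term disappears; when $\X$ is unbounded, $\bar X^n_t(K^c)\leq\langle\bar X^n_t,f_k\rangle$ for $K=\{\|x\|\leq k\}$ by \eqref{fk}, and Lemma \ref{controle-masse} gives $\lim_{k\to+\infty}\limsup_{n\to+\infty}\E\big(\sup_{t\leq T}\bar X^n_t(K^c)\big)=0$. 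For the age piece, the coupling with the dominating lifetimes and the two Bernstein estimates used in the proof of Lemma \ref{lemme1} are uniform in $t\in[0,T]$, and produce, for $n$ larger than some $n_0=n_0(\varepsilon,A,K)$, a bound $\sup_{t\leq T}\P\big(X^n_t((K\times[0,A])^c)>3\varepsilon\big)\leq C\varepsilon$; combined with the uniform $L^2$ estimate of \eqref{estimee_momentp} (hence uniform integrability) and Cauchy--Schwarz, this upgrades to $\sup_{t\leq T}\E\big(X^n_t((K\times[0,A])^c)\big)\leq 3\varepsilon+C'\sqrt{\varepsilon}$ for $n\geq n_0$. A routine choice of $\varepsilon$, then of $K$, $A$ and $n_0$, makes the displayed right-hand side less than $\varepsilon$; the finitely many remaining indices $n<n_0$ are handled one by one, each $\Gamma^n$ being a.s. a finite measure on a Polish space and therefore tight, and enlarging $\mathcal{K}$ absorbs them.

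Combining the mass bound with the compact containment and invoking Prokhorov's theorem on $\mathcal{M}_F([0,T]\times\X\times\R_+)$ equipped with the weak topology (a fortiori the vague one) gives the stated tightness. The only genuinely delicate ingredient is the uniform-in-$n$ control of the proportion of individuals of age larger than $A$ — but that is precisely what was established in Lemma \ref{lemme1}; here it only remains to integrate in $t$ and to de-randomize using the moment estimates, so I do not anticipate any further difficulty.
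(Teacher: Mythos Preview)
Your proposal is correct and follows essentially the same route as the paper: the paper invokes Kurtz's criterion (expected mass outside a compact $\Xi$ is small uniformly in $n$), observes that the probability bounds \eqref{eq1}, \eqref{etape8}, \eqref{etape9} from the proof of Lemma \ref{lemme1} are uniform in $t\in[0,T]$, and then upgrades the probability bound to an expectation bound by exactly the Cauchy--Schwarz/uniform $L^2$ argument you describe. Your additional bookkeeping (separate handling of the finitely many indices $n<n_0$, explicit appeal to Lemma \ref{controle-masse} for the trait escape) is a bit more careful than the paper's presentation, but the substance is the same.
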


\begin{proof}
Following Kurtz \cite[Lemma 1.3]{kurtzaveraging}, a sufficient condition for the tightness of the family $(\Gamma^n)_{n\in \mathbb{N}}$ is that for all $\varepsilon>0$, there exists
a compact set $\Xi$ of $\X\times \R_+$ such that
\begin{equation}
\sup_{n\in \N^*}\E\Big(\Gamma^n\big([0,T]\times \Xi^c\big)\Big)\leq C(T) \varepsilon.\label{etape14}
\end{equation}
Let us establish (\ref{etape14}). From the proof of Lemma \ref{lemme1}, it appears that the upperbounds (\ref{eq1}),
(\ref{etape8}) and (\ref{etape9}) are uniform in $t\in [0,T]$ so that:
\begin{equation}
\sup_{t\in [0,T]}\sup_{n\in \N^*}\mathbb{P}\Big(X^n_t\big((K\times [0,A])^c\big)>3\varepsilon\Big)<5\varepsilon.
\end{equation}
We are now ready to upperbound
\begin{align*}
\E\Big(\Gamma^n\big([0,T]\times (K\times [0,A])^c\big)\Big)= & \E\Big(\int_0^T \langle X^n_t, \ind_{(K\times [0,A])^c} \rangle dt\Big)
=\int_0^T \E\Big(X^n_t\big((K\times [0,A])^c\big)\Big)dt.
\end{align*}
Indeed:
\begin{multline}
\E\Big(X^n_t\big((K\times [0,A])^c\big)\Big)\leq  3\varepsilon\ \P\Big(X^n_t\big((K\times [0,A])^c\big)\leq 3\varepsilon\Big)+\E\Big(\langle X^n_t,1\rangle
\ind_{X^n_t\big((K\times [0,A])^c\big)> 3\varepsilon}\Big)\\
\leq  3\varepsilon +\sqrt{\E\Big(\langle X^n_t,1\rangle^2\Big)}\sqrt{\P\Big(X^n_t\big((K\times [0,A])^c\big)> 3\varepsilon\Big)}
\leq C(T)(\varepsilon+\sqrt{\varepsilon}),
\end{multline}by Cauchy-Schwarz inequality and (\ref{estimee_momentp}). This proves (\ref{etape14}) and finishes the proof.
\end{proof}

\bigskip \noindent
Before proving Proposition \ref{prophomogeneisation2}, we provide a lemma characterizing $\widehat{m}(x,a)$.
\begin{lemme}\label{lemmehomogeneisation}
Let $x\in \X$ be fixed. There exists a unique probability measure $\widehat{m}(x,da)$ on $\mathbb{R}_{+}$, solution of the following equation: For $\psi\in \Co_c^{1}(\R_+,\R)$ with compact support in $[0,+\infty)$,
\begin{align}
\int_{\R_+}\partial_a \psi(a)\widehat{m}(x,da)=\int_{\R_+}\psi(a)r(x,a)\widehat{m}(x,da)-\psi(0)\int_{\R_+}r(x,a)\widehat{m}(x,da).\label{equationstationnaire}
\end{align} The probability measure $\widehat{m}(x,da)$ is absolutely continuous with respect
to the Lebesgue measure and its density is given in (\ref{mchapeau}).
\end{lemme}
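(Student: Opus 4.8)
The plan is to read \eqref{equationstationnaire} as the stationarity equation for the $\R_+$-valued Markov process of deterministic ageing at speed $1$ with killing at rate $r(x,\cdot)$ and instantaneous return to the origin, whose generator acting on $\psi\in\Co^1_c(\R_+,\R)$ is $\psi\mapsto\psi'+r(x,\cdot)(\psi(0)-\psi)$; existence is then a one-line integration by parts, and uniqueness is obtained by an integrating-factor trick reducing the equation to the statement that an auxiliary measure is translation invariant on $(0,+\infty)$.

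\textbf{Existence.} Set $g(a)=\exp\bigl(-\int_0^a r(x,\alpha)\,d\alpha\bigr)$, so $g\in\Co^1$, $g(0)=1$ and $g'(a)=-r(x,a)g(a)$. By Assumption \ref{hyptaux}-(2), $\int_0^a r(x,\alpha)\,d\alpha\to+\infty$ as $a\to+\infty$, hence $g(a)\to0$; under the paper's standing assumptions $Z:=\int_0^{+\infty}g(a)\,da<+\infty$, so $\widehat m(x,a)=g(a)/Z$ is a probability density. For $\psi\in\Co^1_c(\R_+,\R)$, integrating by parts,
\[
\int_{\R_+}\psi'(a)g(a)\,da=-\psi(0)+\int_{\R_+}\psi(a)r(x,a)g(a)\,da, \qquad \int_{\R_+}r(x,a)g(a)\,da=g(0)-g(+\infty)=1 .
\]
Dividing the first identity by $Z$ and using $1/Z=\int_{\R_+}r(x,a)\widehat m(x,a)\,da$ from the second gives \eqref{equationstationnaire}.

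\textbf{Uniqueness.} Let $\nu=\nu(x,da)$ be any probability solution and write $\nu=\nu_1+p\,\delta_0$ with $\nu_1=\nu|_{(0,+\infty)}$, $p=\nu(\{0\})$. Testing \eqref{equationstationnaire} against $\psi$ supported in $(0,+\infty)$ (so $\psi(0)=0$) yields $\int\psi'\,d\nu_1=\int r(x,\cdot)\psi\,d\nu_1$. Define the locally finite measure $\mu:=g^{-1}\nu_1$ on $(0,+\infty)$, which is legitimate since $g\in\Co^1$ is strictly positive. For $\varphi\in\Co^1_c((0,+\infty))$, using $(g^{-1})'=r(x,\cdot)g^{-1}$ and applying the previous identity to the admissible test function $\varphi g^{-1}$,
\[
\int\varphi'\,d\mu=\int(\varphi g^{-1})'\,d\nu_1-\int\varphi\,r(x,\cdot)g^{-1}\,d\nu_1=\int r(x,\cdot)\varphi g^{-1}\,d\nu_1-\int\varphi\,r(x,\cdot)g^{-1}\,d\nu_1=0 .
\]
Thus $\mu$ has vanishing distributional derivative on the connected set $(0,+\infty)$, hence $\mu=c\,\mathrm{Leb}|_{(0,+\infty)}$ for some $c\ge0$, i.e. $\nu_1(da)=c\,g(a)\,da$. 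Substituting $\nu=c\,g(a)\,\ind_{a>0}\,da+p\,\delta_0$ into \eqref{equationstationnaire} for a general $\psi$ and using the two integral identities above, all contributions cancel except the boundary term $p\,\psi'(0)$; choosing $\psi$ with $\psi'(0)\neq0$ forces $p=0$, and then the total mass condition $cZ=1$ gives $c=1/Z$. Hence $\nu=\widehat m(x,da)$, as claimed.

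\textbf{Main obstacle.} The existence part is routine. The real work is in uniqueness: one must rule out singular continuous and atomic components of $\nu$ on $(0,+\infty)$, which is precisely what the integrating-factor reduction to a translation-invariant measure does, and one must keep track of the boundary contribution at $a=0$, where the term $p\,\psi'(0)$ is what prevents any atom. The only other point to be careful about is the finiteness of $Z$, which is needed for the normalized density \eqref{mchapeau} to be a bona fide probability density.
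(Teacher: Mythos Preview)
Your proof is correct, and it takes a genuinely different route from the paper's for the uniqueness part.

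The paper does not separate existence from uniqueness. It starts from an arbitrary solution and plugs in the particular test function $\psi(a)=\int_a^{+\infty}f(\alpha)\,d\alpha$ with $f\in\Co_c(\R_+,\R_+)$; Fubini applied to \eqref{equationstationnaire} then gives $\int f\,d\widehat m=\int_{\R_+} f(\alpha)\bigl(\int_\alpha^{+\infty}r(x,a)\,\widehat m(x,da)\bigr)\,d\alpha$, which in one stroke forces absolute continuity with $\Co^1$ density $\widehat m(x,a)=\int_a^{+\infty}r(x,\alpha)\widehat m(x,\alpha)\,d\alpha$. An integration by parts then converts \eqref{equationstationnaire} into the ODE $\partial_a\widehat m(x,a)=-r(x,a)\widehat m(x,a)$ with the renewal boundary condition, and solving this ODE yields \eqref{mchapeau}. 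Existence is only implicit, through the well-posedness of this ODE.

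Your argument instead verifies existence directly by integration by parts against the candidate density, and for uniqueness first isolates a possible atom at $0$, then on $(0,+\infty)$ uses the integrating factor $g^{-1}$ to reduce the equation to the statement that $g^{-1}\nu_1$ has vanishing distributional derivative, hence is a multiple of Lebesgue; the boundary term $p\,\psi'(0)$ then kills the atom. The paper's trick is shorter and more constructive (the density is derived rather than guessed), while your decomposition makes the exclusion of singular and atomic parts completely explicit and works without ever assuming the solution has a density. Both arguments tacitly rely on the finiteness of $Z=\int_0^{+\infty}\exp\bigl(-\int_0^a r(x,\alpha)\,d\alpha\bigr)\,da$, which the paper also takes for granted.
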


\begin{proof}
Let us consider the test function $\psi(a)=\int_a^{+\infty} f(\alpha)d\alpha$, where $f\in \Co_c(\R_+,\R_+)$ is positive.
Then $\partial_a \psi(a)=-f(a)$ and $\psi(0)=\int_0^{+\infty} f(\alpha)d\alpha$. Equation (\ref{equationstationnaire}) gives by Fubini's theorem:
\begin{align}
\int_{\R_+}f(a)\widehat{m}(x,da)= & \int_{\R_+}\int_0^a f(\alpha)d\alpha \ r(x,a)\widehat{m}(x,da)\nonumber\\
= & \int_{\R_+}f(\alpha) \int_\alpha^{+\infty}r(x,a)\widehat{m}(x,da)\, d\alpha.
\end{align}This entails that $\widehat{m}(x,da)$ is absolutely continuous with respect to the Lebesgue measure with density
$\widehat{m}(x,a)=\int_{a}^{+\infty}r(x,\alpha)\widehat{m}(x,\alpha)d\alpha$. The latter implies that $a\mapsto \widehat{m}(x,a)$
is a function of class $\Co^1$. Using further an integration by part in (\ref{equationstationnaire}), and since $\widehat{m}(x,a)$ tends to 0 when $a$ grows to infinity, we get for all $\psi\in \Co^1_c(\R_+,\R)$
\begin{align}
-\psi(0)\widehat{m}(x,0)-\int_{\R_+}\psi(a)\partial_a \widehat{m}(x,a)da=\int_{\R_+}\big(\psi(a)-\psi(0)\big)r(x,a)\widehat{m}(x,a)da.
\end{align}By identification,  we obtain that $\widehat{m}(x,a)$ is a solution of
\begin{align}
 & \partial_a \widehat{m}(x,a)=-r(x,a)\widehat{m}(x,a)\nonumber\\
 & \widehat{m}(x,0)=\int_{\R_+}r(x,a)\widehat{m}(x,a)da,\label{PDEstationnaire}
\end{align}which is solved by
\begin{align}
\widehat{m}(x,a)=\widehat{m}(x,0)\exp\big(-\int_0^a r(x,\alpha)d\alpha\big).\label{etape6}
\end{align}Since $\widehat{m}(x,a)da$ is a probability measure, necessarily \begin{align}
\widehat{m}(x,0)=\int_{\R_+}r(x,a)\widehat{m}(x,a)da=\frac{1}{\int_{\R_+}\exp\big(-\int_0^a r(x,\alpha)d\alpha\big)da}.\label{etape7}
\end{align}This provides existence and uniqueness of the solution of (\ref{PDEstationnaire}) and hence of (\ref{equationstationnaire}).
\end{proof}

\begin{rque}Notice that the system (\ref{PDEstationnaire}) defines the stable age equilibrium of the McKendrick-Von Foerster equation \cite{mckendrick,vonfoerster} (see also \cite{webb}) when the birth and death rates equal to $r(x,a)$ and the trait $x$ is fixed.\hfill $\Box$
\end{rque}

\begin{rque}\label{rque-dense}The space $\Co^1_c(\R_+,\R)$ is separable and there exists a denumberable dense family $(\psi_k)_{k\in \N}$ in this set. To obtain the result of Lemma \ref{lemmehomogeneisation}, it is sufficient to have \eqref{equationstationnaire} for these functions $(\psi_k)_{k\in \N}$.
\end{rque}

\bigskip \noindent
We are now able to prove Proposition \ref{prophomogeneisation2}.

\begin{proof}[Proof of Proposition \ref{prophomogeneisation2}]From (\ref{defGamman}), we can see that the marginal measure
of $\Gamma^n(ds,dx,da)$ on $[0,T]\times \X$ is $\bar{X}^n_s(dx)ds$. For any real bounded test function $\varphi\,:\,(s,x)\mapsto \varphi_s(x)$ on $[0,T]\times \X$,
\begin{equation}
\int_0^t \int_{\X\times \R_+} \varphi_s(x)\Gamma^n(ds,dx,da)=\int_0^t \langle \bar{X}^n_s,\varphi_s\rangle ds.\label{etape17}
\end{equation}
The sequence $(\bar{X}^n)_{n\in \N^*}$ is uniformly tight by Proposition \ref{proptightness}, as well as   $(\Gamma^n)_{n\in \N^*}$, by Lemma \ref{lemme2} (ii). Using Prohorov's theorem, we thus deduce that $\ (\Gamma^n(ds,dx,da),\bar{X}^n_s(dx)ds)_{n}$ is relatively compact and there exists a subsequence
that converges in distribution to a limiting value, say $(\Gamma(ds,dx,da),\bar{X}_s(dx)ds)$. Taking (\ref{etape17}) to the limit, we obtain that
$\bar{X}_s(dx)ds$ is necessarily the marginal measure of $\Gamma(ds,dx,da)$ on $[0,T]\times \X$ up to a null-measure set. We deduce from this
(\eg Lemma 1.4 of Kurtz \cite{kurtzaveraging}) that there exists a (random) probability-valued process $(\gamma_{s,x}(da),\, s\in [0,T],x\in \X)$ that is predictable in $(\omega,s)$ and such that for all bounded measurable
function $\varphi(s,x,a)$ on $[0,T]\times \X\times \R_+$,
\begin{equation}
\int_0^t \int_{\X\times \R_+} \varphi(s,x,a)\Gamma(ds,dx,da)=\int_0^t \int_\X \int_{\R_+} \varphi(s,x,a)\gamma_{s,x}(da)\bar{X}_s(dx)ds.
\end{equation}
We now want to characterize the limiting value $\Gamma(ds,dx,da)=\gamma_{s,x}(da)\bar{X}_s(dx)ds$.
Applying (\ref{PBM}) for a test function $\varphi(x,a)\in \Co^{0,1}_b(\X\times \R_+,\R)$ and dividing by $n$ gives that:
\begin{align}
\frac{M^{n,\varphi}_t}{n}&=  \frac{\langle X^n_t,\varphi\rangle-\langle X^n_0,\varphi\rangle}{n}-\int_0^t \int_{\X \times \R_+}\Big[
\partial_a \varphi(x,a) + r(x,a)\Big(\varphi(x,0)
  -  \varphi(x,a)\Big)\nonumber\\
&\hskip 1cm +  p(x,a)\,\Big(r(x,a)+\frac{b(x,a)}{n}\Big)\int_{\R^d}\big(\varphi(x+h,0)-\varphi(x,0)\big)\pi^n(x, a, dh)\nonumber
\\
&\hskip 1cm  + \frac{b(x,a)}{n}\varphi(x,0)  -  \Big(\frac{d(x,a)+X^n_sU(x,a)}{n}\Big)\varphi(x,a)\Big]\Gamma^n(ds,dx,da)\label{etape28}
\end{align} is a martingale.
Using \eqref{estimee_momentp}, we can easily  prove that $\left(M^{n,\varphi}_t/n\right)_{n\in\mathbb{N}^*, t\in [0,T]}$ is uniformly integrable and that
\begin{equation*}
\lim_{n\rightarrow +\infty} \E\Big(\Big|\frac{M^{n,\varphi}_t}{n}-\widetilde{M}^{\varphi}_t\Big|\Big)=0,
\end{equation*}
where
\begin{equation}
\widetilde{M}^{\varphi}_t = \int_0^t \int_{\X\times \R_+}\Big[\partial_a \varphi(x,a)
+r(x,a)\big(\varphi(x,0)-\varphi(x,a)\big)\Big]\gamma_{s,x}(da)\bar{X}_s(dx)ds.\label{etape16}
\end{equation}

\noindent Moreover, the uniform integrability of $(M^{n,\varphi}/n)$ also provides that the process $(\widetilde{M}^{\varphi}_t )_{t}$ is a martingale. As it is also
a continuous and finite variation process, it must hence be almost surely zero. Since this holds for every $t\in \R_+$, we have proved that a.s., $dt$-a.e.
\begin{align}
\int_{\X\times\R_+}\Big[\partial_a \varphi(x,a)
+r(x,a)\big(\varphi(x,0)-\varphi(x,a)\big)\Big]\gamma_{t,x}(da)\bar{X}_t(dx)=0.\label{etape15}
\end{align}Choosing $\varphi(x,a)=\phi_\ell(x)\psi_k(a)$ with $(\phi_\ell)_{\ell\in \N}$ and $(\psi_k)_{k\in \N}$ dense families in $\Co_c(\X,\R)$ and $\Co^1_c(\R_+,\R)$, respectively, we obtain from
(\ref{etape15}) that, for all $\ell, k\ \in \N$, $$\int_{\X}\phi_\ell(x) H_k(t,x)\bar{X}_t(dx)=0$$where $
H_k(t,x)=\int_{\R_+}\Big[\partial_a \psi_k(a)
+r(x,a)\big(\psi_k(0)-\psi_k(a)\big)\Big]\gamma_{t,x}(da)$. Almost surely, the function $H_k(t,x)$ is  bounded and is thus $dt$-a.e. $\bar{X}_t(dx)$-integrable.
We obtain that for all $k\in \N$, a.s., $dt$-a.e.,  $\bar{X}_t(dx)$-a.e.,
\begin{align}
\int_{\R_+}\Big[\partial_a \psi_k(a)
+r(x,a)\big(\psi_k(0)-\psi_k(a)\big)\Big]\gamma_{t,x}(da)=0.
\end{align}
By Lemma \ref{lemmehomogeneisation} and Remark \ref{rque-dense}, we deduce that a.s., $dt$-a.e., and  $\bar{X}_t(dx)$-a.e., $\gamma_{t,x}(da) =
  \widehat{m}(x,a)da$ and as a consequence, any limiting value of $(X^n_{t})_{n\in \N^*}$
is of the form $\bar{X}_t(dx)\otimes \widehat{m}(x,a)da$. 
\end{proof}

\subsubsection{Characterization of the limiting values}\label{sectioncharacterization}

In the previous sections, we have proved that the sequence $(\bar{X}^n)_{n\in \N^*}$ is tight and that for a given limiting value $\bar{X}$, the associated subsequence $(\Gamma^n(dt,dx,da)=X^n_t(dx,da)\ dt)_{n\in \N^*}$ converges in $(\mathcal{M}_F([0,T]\times \X\times \R_+),w)$ to $\bar{X}_t(dx)\widehat{m}(x,da)\ dt$.
Now, we are ready to prove that:
 \begin{lemme}The limiting values $\bar{X}$ of the sequence $(\bar{X}^n)_{n\in \N^*}$ are solution of the martingale problem (\ref{martingalepremiercas})-(\ref{crochetmartingalepremiercas}).
\end{lemme}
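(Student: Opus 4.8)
The plan is to pass to the limit in the semimartingale decomposition \eqref{PBMx} written for a test function $f\in\mathcal{D}(A)$, which depends on the trait only, so that $\langle X^n_t,f\rangle=\langle\bar X^n_t,f\rangle$. I work along the subsequence fixed before Proposition~\ref{prophomogeneisation2}, for which $\bar X^n$ converges in law to $\bar X$ in $\mathbb{D}([0,T],(\mathcal{M}_F(\X),w))$ and $\Gamma^n$ converges in law to $\bar X_s(dx)\widehat m(x,a)da\,ds$; the point is that the fast age variable is averaged out by replacing $X^n_s(dx,da)\,ds$ by $\Gamma^n$ and invoking its limit. Rewrite the finite variation part of \eqref{PBMx} as the sum of (i) $\int_0^t\int_{\X\times\R_+}\big(b(x,a)-d(x,a)\big)f(x)\,\Gamma^n(ds,dx,da)$, (ii) the mutation term $\int_0^t\int_{\X\times\R_+}\big(nr(x,a)+b(x,a)\big)p(x,a)\big(\int_{\R^d}(f(x+h)-f(x))\pi^n(x,dh)\big)\Gamma^n(ds,dx,da)$, and (iii) the competition term $-\int_0^t\int_{\X\times\R_+}X^n_sU(x,a)\,f(x)\,\Gamma^n(ds,dx,da)$.

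Term (i) converges by the weak convergence of $\Gamma^n$: its integrand is bounded, and since the limit measure is absolutely continuous in $s$ the associated functional is a.s.\ continuous at the limit, so (i) tends to $\int_0^t\big(\widehat b(x)-\widehat d(x)\big)f(x)\bar X_s(dx)ds$. For (ii), splitting the factor $nr(x,a)+b(x,a)$ into its two summands: the contribution of $b(x,a)$ is $O(1/n)$ by Assumption~\ref{tauxmutation} (the inner integral being $O(1/n)$) together with the mass estimate \eqref{estimee_momentp}, hence negligible; in the contribution of $nr(x,a)$ one writes it as $r(x,a)\big[n\int_{\R^d}(f(x+h)-f(x))\pi^n(x,dh)\big]$ and replaces the bracket by $Af(x)$, uniformly in $x$ with an error tending to $0$ by \eqref{condnoyaumutation} or \eqref{estnoyaumutation}, and then lets $\Gamma^n$ converge (using the regularity of the coefficients) to obtain $\int_0^t\int_\X\widehat{(p\,r)}(x)Af(x)\bar X_s(dx)ds$.

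The main obstacle is term (iii), which is quadratic in $X^n_s$ and carries the competition-induced coupling between age and trait. Write it as $-\int_0^t\langle X^n_s\otimes X^n_s,\Psi_f\rangle ds$ with $\Psi_f((x,a),(y,\alpha))=U((x,a),(y,\alpha))f(x)$, and introduce the random measure $\Xi^n(ds,dx,da,dy,d\alpha)=X^n_s(dx,da)\,X^n_s(dy,d\alpha)\,ds$ on $[0,T]\times(\X\times\R_+)^2$. Its tightness follows from the tail estimates in the proof of Lemma~\ref{lemme1} applied to both pairs of variables, combined with \eqref{moment}, \eqref{estimee_momentp} and a Cauchy--Schwarz argument as in Lemma~\ref{lemme2}. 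To identify its limit, note that for $dt$-a.e.\ $s$ Proposition~\ref{prophomogeneisation2} gives that $X^n_s$ converges in law to $\bar X_s(dx)\widehat m(x,a)da$ jointly with $\bar X^n\Rightarrow\bar X$, so by the continuous mapping theorem $X^n_s\otimes X^n_s$ converges in law to $\big(\bar X_s(dx)\widehat m(x,a)da\big)\otimes\big(\bar X_s(dy)\widehat m(y,\alpha)d\alpha\big)$; uniform integrability from \eqref{moment} and dominated convergence in $s$ then yield $\Xi^n\Rightarrow\bar X_s(dx)\widehat m(x,a)da\,\bar X_s(dy)\widehat m(y,\alpha)d\alpha\,ds$. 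Passing to the limit, (iii) tends to $-\int_0^t\bar X_s\widehat U(x)\,f(x)\bar X_s(dx)ds$ with $\bar X_s\widehat U$ as in the statement.

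Collecting (i)--(iii) together with $\langle\bar X^n_t,f\rangle-\langle\bar X^n_0,f\rangle\to\langle\bar X_t,f\rangle-\langle\bar X_0,f\rangle$ (using that $\bar X$ is continuous, so $t$ is a.s.\ a continuity point, and $X^n_0\to X_0$), one gets that $M^{n,f}_t$ converges in law to $M^f_t$ given by \eqref{martingalepremiercas}. Since each $M^{n,f}$ is a martingale and, by Doob's inequality and the bound $\langle M^{n,f}\rangle_t\le 2\bar r\|f\|_\infty^2\int_0^t\langle\bar X^n_s,1\rangle\,ds+O(1/n)$ with \eqref{estimee_momentp}, the family $\{M^{n,f}_t\}_n$ is uniformly integrable, the limit $M^f$ is a martingale; passing to the limit in $\E\big[(M^{n,f}_t-M^{n,f}_s)\,G(\bar X^n_{u},u\le s)\big]=0$ for bounded continuous $G$ gives the martingale identity. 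Finally, the same averaging applied to \eqref{crochetPBMx}, whose dominant contribution is $\tfrac1n\cdot 2nr(x,a)f^2(x)=2r(x,a)f^2(x)$ integrated against $\Gamma^n$ while all remaining terms are $O(1/n)$, gives $\langle M^{n,f}\rangle_t\to\int_0^t\int_\X 2\widehat r(x)f^2(x)\bar X_s(dx)ds$; passing to the limit in the martingale $(M^{n,f}_t)^2-\langle M^{n,f}\rangle_t$, uniformly integrable again by \eqref{estimee_momentp}, identifies this limit as the quadratic variation \eqref{crochetmartingalepremiercas} of $M^f$, which completes the proof. The only genuinely new step is the treatment of (iii), namely the tightness and the product-form limit of $\Xi^n$.
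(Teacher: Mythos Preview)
Your overall strategy coincides with the paper's: both pass to the limit in the semimartingale decomposition \eqref{PBMx} for trait-only test functions, relying on the convergence of $\Gamma^n$ from Proposition~\ref{prophomogeneisation2} to average out the age variable, and both identify the bracket by passing to the limit in a quadratic functional (the paper via It\^o's formula for $\langle \bar X_t,f\rangle^2$, you via $(M^{n,f})^2-\langle M^{n,f}\rangle$, which is equivalent). Your treatment of terms (i), (ii) and of the bracket is correct and matches the paper. You are in fact more explicit than the paper about the quadratic competition term (iii), which the paper absorbs into the single assertion~\eqref{etape23}.

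There is, however, a genuine gap in your identification of the limit of $\Xi^n$. The sentence ``for $dt$-a.e.\ $s$ Proposition~\ref{prophomogeneisation2} gives that $X^n_s$ converges in law to $\bar X_s(dx)\widehat m(x,a)da$ \ldots\ so by the continuous mapping theorem $X^n_s\otimes X^n_s$ converges in law \ldots\ dominated convergence in $s$ then yields $\Xi^n\Rightarrow\ldots$'' does not go through. Convergence of $\Gamma^n$ only controls time-integrated quantities $\int_0^T\langle X^n_s,\varphi\rangle\,h(s)\,ds$; it does \emph{not} yield convergence in law of $X^n_s$ at individual times $s$ (the ``consequence'' clause in Proposition~\ref{prophomogeneisation2} is a statement about the disintegration of the limit, not about fixed-time marginals of $X^n$). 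Even granting pointwise-in-$s$ convergence in law, dominated convergence would give convergence of $\int_0^T\E[F(X^n_s)]\,ds$, not convergence in law of $\int_0^T F(X^n_s)\,ds$, so the product-form limit of $\Xi^n$ is not established.

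The repair is to mimic the proof of Proposition~\ref{prophomogeneisation2} at the level of $\Xi^n$. After your tightness step, take a joint subsequential limit $(\bar X,\Gamma,\Xi)$ and disintegrate $\Xi(ds,dx,da,dy,d\alpha)=\xi_{s,x,y}(da,d\alpha)\,\bar X_s(dx)\bar X_s(dy)\,ds$ (the $(s,x,y)$-marginal is identified since it is a continuous functional of $\bar X^n$). Apply It\^o's formula to the product $\langle X^n_t,\phi\otimes\psi\rangle\langle X^n_t,\eta\otimes\chi\rangle$, divide by $n$ and let $n\to\infty$: the boundary, martingale and covariation terms vanish, leaving for $dt$-a.e.\ $s$ and $\bar X_s\otimes\bar X_s$-a.e.\ $(x,y)$,
\[
\int_{\R_+^2}\big[\chi(\alpha)\,L_x\psi(a)+\psi(a)\,L_y\chi(\alpha)\big]\,\xi_{s,x,y}(da,d\alpha)=0,
\quad L_x\psi(a)=\psi'(a)+r(x,a)(\psi(0)-\psi(a)).
\]
This is the stationarity equation for two independent renewal-age chains, whose unique invariant probability is the product $\widehat m(x,\cdot)\otimes\widehat m(y,\cdot)$ (apply Lemma~\ref{lemmehomogeneisation} after noting that products $\psi\otimes\chi$ form a core). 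This closes the gap; the paper itself is terse on exactly this point.
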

\begin{proof}
Let $0<s_1\leq \dots s_k<s<t$, and let us introduce for $Y\in \D(\R_+,\mathcal{M}_F(\X))$:
\begin{multline}
\Psi_{s,t}(Y)=  \phi_1(Y_{s_1})\dots \phi_k(Y_{s_k})\Big\{\langle Y_t,f\rangle -\langle Y_s,f\rangle-\int_s^t du\int_{\X}
Y_u(dx)\Big[\\
\widehat{(p\,r)}(x)Af(x)+\big(\widehat{b}(x)-\widehat{d}(x)- Y_s\widehat{U}(x)\big)f(x)\Big]\Big\},
\end{multline}where $\phi_1,\dots, \phi_k$ are bounded continuous functions on $\mathcal{M}_F(\X)$ and $f\in \mathcal{D}(A)$.
Our purpose is to prove that $\E\big( \Psi_{s,t}(\bar{X}) \big)=0$ for any limiting value $\bar{X}$ of $(\bar{X}^n)_{n\in \N^*}$.\\

\noindent Let $\bar{X}$ be a limiting value of $(\bar{X}^n)_{n\in \N^*}$ and let $(\bar{X}^{u_n})_{n\in \N^*}$ be a subsequence converging to $\bar{X}$. On the one hand, thanks to Proposition \ref{prophomogeneisation2}, \eqref{estimee_momentp} and \eqref{la_limite_est_continue}:
\begin{align}
\E\big(\Psi_{s,t}(\bar{X})\big)=\lim_{n\rightarrow +\infty}\E\Big(\phi_1(\bar{X}^{u_n}_{s_1})\dots \phi_k(\bar{X}^{u_n}_{s_k})\Big\{\langle \bar{X}^{u_n}_t,f\rangle -\langle \bar{X}^{u_n}_s,f\rangle-\int_s^t du\int_{\X\times \R_+}
X^{u_n}_u(dx,da)\Big[\nonumber\\
p(x,a)\,r(x,a)\,Af(x)+\big(b(x,a)-d(x,a)- \int_{\X\times \R_+}U((x,a),(y,\alpha))X^{u_n}_u(dy,d\alpha)\big)f(x)\Big]\Big\}\Big).\label{etape23}
\end{align}
On the other hand, the term under the expectation in the r.h.s. of (\ref{etape23}) equals:
\begin{align}
\phi_1(X^{u_n}_{s_1})\dots \phi_k(X^{u_n}_{s_k})\Big\{M^{u_n,f}_t-M^{u_n,f}_s + A_{u_n}+B_{u_n}\Big\},\label{etape25}
\end{align}where $M^{u_n,f}$ has been defined in (\ref{PBMx}) and where:
\begin{align*}
A_{u_n}= & \int_s^t du \int_{\X\times \R_+} X_u^{u_n}(dx,da)\ r(x,a)\Big[u_n \int_{\X}\Big(f(x+h)-f(x)\Big)K^{u_n}(x, a, dh)-p(x,a)Af(x)\Big]\nonumber\\
 = & \int_s^t du \int_{\X\times \R_+} X_u^{u_n}(dx,da)\ r(x,a)\,p(x,a)\ \Big[u_n \int_{\X}\Big(f(x+h)-f(x)\Big)\pi^{u_n}(x,dh)-Af(x)\Big]\\
 B_{u_n}= & \int_s^t du \int_{\X\times \R_+} X_u^{u_n}(dx,da)\ b(x,a)\Big[\int_{\R^d}f(x+h) K^{u_n}(x, a, dh)-f(x)\Big]
\end{align*}
Firstly, using (\ref{estimee_momentp}) and the fact that the process $M^{n,f}$ is a martingale we obtain that:
\begin{equation}
\E\big(\phi_1(X^{u_n}_{s_1})\dots \phi_k(X^{u_n}_{s_k}) \big[M^{u_n,f}_t-M^{u_n,f}_s\big]\big)=0.\label{etape26}
\end{equation}Secondly, from Assumption \ref{tauxmutation}, $|\int_{\R^d}f(x+h)K^n(x, a, dh)-f(x)|=o(1/n)$ and using (\ref{estimee_momentp}) again provides:\begin{equation}
\lim_{n\rightarrow +\infty}\E\big(\phi_1(X^{u_n}_{s_1})\dots \phi_k(X^{u_n}_{s_k}) \big[A_{u_n}+B_{u_n}\big]\big)=0.\label{etape27}
\end{equation}
From \eqref{etape23}, \eqref{etape25}, \eqref{etape26} and \eqref{etape27}, we deduce that $\E\big(\Psi_{s,t}(\bar{X})\big)=0$ and hence $(M^f_t)_{t\in \R_+}$ defined in \eqref{martingalepremiercas} is a martingale. From this, using Itô's formula with localization arguments and Proposition \ref{prop_usefulresults} (i), we obtain that the following process is a martingale:
\begin{multline}
\langle \bar{X}_t,f\rangle^2-\langle \bar{X}_0,f\rangle^2-\int_0^t  2\langle \bar{X}_s,f\rangle \int_{\mathcal{X}} \Big[\widehat{(pr)}(x)Af(x)+\big(\widehat{b}(x)\\
-\widehat{d}(x)-\bar{X}_sU(x)\big)f(x)\Big]\bar{X}_s(dx)\ ds -\langle M^f\rangle_t\label{etape24}
\end{multline}is a martingale. Moreover, using the results of Proposition \ref{prop_usefulresults}, we obtain that the following process is a martingale:
\begin{multline*}
\langle \bar{X}^n_t  ,f\rangle^2-  \langle \bar{X}^n_0,f\rangle^2\\
\begin{aligned}
- & \int_0^t \Big[ 2 \langle \bar{X}^n_s,f\rangle \int_{\mathcal{X}\times \R_+}\Big(nr(x,a)\int_{\R^d}\big(f(x+h)-f(x)\big)K^n(x,a,dh)\\
 & \qquad + b(x,a)\int_{\R^d}f(x+h)K^n(x,a,dh)-\big(d(x,a)+X^n_sU(x,a)\big)f(x)\Big)X^n_s(dx,da)\ \Big] ds\\
- & \int_0^t \int_{\mathcal{X}\times \R_+} \Big[r(x,a)\Big(\int_{\R^d}f^2(x+h)K^n(x,a,dh)+f^2(x)\Big)\\
 &\qquad  +\frac{b(x,a)}{n}\int_{\R^d}f^2(x+h)K^n(x,a,dh)+\frac{d(x,a)+X^n_sU(x,a)}{n}f^2(x)\Big] X^n_s(dx,da)\ ds.
\end{aligned}
\end{multline*}By using arguments similar as those in the beginning of the proof, we deduce that
\begin{multline}
\langle \bar{X}_t  ,f\rangle^2-  \langle \bar{X}_0,f\rangle^2- \int_0^t  2 \langle \bar{X}_s,f\rangle \int_{\mathcal{X}}\Big[\widehat{(pr)}Af(x)\\
 + \big(\widehat{b}(x)-d(x)-\bar{X}_sU(x)\big)f(x)\Big]\bar{X}_s(dx) ds-  \int_0^t \int_{\mathcal{X}} 2\widehat{r}(x)f^2(x) \bar{X}_s(dx)\ ds\label{etape19}
\end{multline}is a martingale. Comparing \eqref{etape24} and \eqref{etape19} yields the bracket of the martingale $M^f$. This ends the proof.
\end{proof}

\subsubsection{Uniqueness of the martingale problem}\label{sectionunicitepbm}

We have shown that the limiting values of the uniformly tight sequence $(\bar{X}^n)_{n\in \N^*}$ satisfy the martingale problem (\ref{martingalepremiercas})-(\ref{crochetmartingalepremiercas}). To conclude the proof of Theorem \ref{propconvergence}, it remains to prove the uniqueness of the solution of this martingale problem.

\begin{prop}\label{lemmeuniciteage}There is a unique solution to the martingale problem of Theorem \ref{propconvergence}.
\end{prop}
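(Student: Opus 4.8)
The plan is to reduce the nonlinear martingale problem \eqref{martingalepremiercas}--\eqref{crochetmartingalepremiercas} to a \emph{linear} superprocess martingale problem by a Dawson--Girsanov change of measure that removes the competition drift, and then to invoke the classical well-posedness of the latter. On cylindrical functions $F_f(\mu)=F(\langle\mu,f\rangle)$, \eqref{martingalepremiercas}--\eqref{crochetmartingalepremiercas} is the martingale problem of the superprocess with trait motion generated by $\mathcal A f(x):=\widehat{(p\,r)}(x)\,Af(x)$, binary branching of local rate $2\widehat r(x)$, linear mass creation rate $\widehat b(x)$ and killing rate $\widehat d(x)+\mu\widehat U(x)$; only the last term is nonlinear. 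The coefficients $\widehat{(p\,r)},\widehat b,\widehat d$ are bounded and continuous, $\mathcal A$ is a bounded non-negative time change of the Feller generator $A$ of Assumption \ref{tauxmutation} (so its motion martingale problem is well posed), and, crucially, $\widehat r(x)=\int_{\R_+}r(x,a)\widehat m(x,a)\,da=\widehat m(x,0)>0$ for every $x\in\X$ by \eqref{etape7}, so the branching rate is everywhere strictly positive. I would first build, from the family $(M^f)_{f\in\mathcal D(A)}$ of \eqref{martingalepremiercas} and from \eqref{crochetmartingalepremiercas} (polarised), the orthogonal martingale measure $\mathbb{M}(ds,dx)$ on $\R_+\times\X$ with covariance measure $2\widehat r(x)\,\bar X_s(dx)\,ds$, by the usual extension procedure.

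Next, for any solution $\bar X$ on $[0,T]$, set $\theta_s(x)=\bar X_s\widehat U(x)$ and
\[
D_t=\exp\!\Big(\int_0^t\!\!\int_\X \frac{\theta_s(x)}{2\widehat r(x)}\,\mathbb{M}(ds,dx)-\frac12\int_0^t\!\!\int_\X \frac{\theta_s(x)^2}{2\widehat r(x)}\,\bar X_s(dx)\,ds\Big).
\]
Stopping at $\tau_N=\inf\{t:\langle\bar X_t,1\rangle>N\}$, which satisfies $\tau_N\uparrow\infty$ a.s.\ since $\sup_{t\le T}\langle\bar X_t,1\rangle<\infty$ a.s.\ (limiting version of \eqref{estimee_momentp}), makes $\theta_s$ bounded up to $\tau_N$, and a Novikov-type bound then shows $(D_{t\wedge\tau_N})_t$ is a true martingale. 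Under $d\mathbb{Q}^N=D_{T\wedge\tau_N}\,d\P$, Girsanov's theorem for martingale measures yields that, up to $\tau_N$, $\bar X$ solves the \emph{linear} martingale problem obtained from \eqref{martingalepremiercas} by deleting the term $\bar X_s\widehat U(x)f(x)$, with unchanged quadratic variation \eqref{crochetmartingalepremiercas}.

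It then remains to invoke uniqueness for this linear problem: it is the martingale problem of the Dawson--Watanabe superprocess with motion generator $\mathcal A$ and branching mechanism $\lambda\mapsto\widehat r(x)\lambda^2-(\widehat b(x)-\widehat d(x))\lambda$, for which the branching property holds. Its Laplace functionals satisfy $\E[\exp(-\langle\bar X_t,f\rangle)]=\exp(-\langle\bar X_0,u_t\rangle)$ with $u_t$ the unique bounded non-negative mild solution of $\partial_t u_t=\mathcal A u_t-\widehat r\,u_t^2+(\widehat b-\widehat d)u_t$, $u_0=f\ge0$ (existence and uniqueness of $u$ by Picard iteration, the nonlinearity being locally Lipschitz and $u$ staying bounded); this fixes the finite-dimensional distributions, hence the law of $\bar X$ under $\mathbb{Q}^N$ (see e.g.\ \cite{dawson,etheridgebook}). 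Since $D_{T\wedge\tau_N}^{-1}$ is an explicit functional of the path $(\bar X_{t\wedge\tau_N})_{t\le T}$, the law of $(\bar X_{t\wedge\tau_N})_{t\le T}$ under $\P$ is thereby determined; letting $N\to\infty$ gives uniqueness of the law of $\bar X$ on $[0,T]$, and, $T$ being arbitrary, on $\R_+$.

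The main obstacle is the Girsanov step: making sense of the stochastic integral against $\mathbb{M}$ and checking that the resulting exponential is a genuine martingale, despite $\theta_s$ carrying the a priori only locally bounded total mass $\langle\bar X_s,1\rangle$ and the factor $1/\widehat r$ being possibly unbounded when $\X$ is not compact. Both difficulties are absorbed by the localization in $N$, together with, if needed, a further localization on trait-compacts using the tightness estimates behind Lemma \ref{lemme1}; once past this point, everything rests on classical properties of linear superprocesses.
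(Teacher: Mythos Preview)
Your approach is essentially the same as the paper's: remove the nonlinear competition term by a Dawson--Girsanov change of measure, identify the resulting linear problem as a Dawson--Watanabe superprocess, and establish its uniqueness via the log-Laplace equation. The paper simply cites Dawson's Theorem~7.2.2 for the Girsanov step and Roelly--Rouault \cite{roellyrouault} for the linear uniqueness, then Evans--Perkins \cite{evansperkins} and Fitzsimmons \cite{fitzsimmons} to transfer uniqueness back to the nonlinear problem, whereas you spell out the martingale-measure construction, the explicit density $D_t$, and the localisation in $N$; the paper also chooses to absorb the entire drift $\widehat b-\widehat d-\bar X_s\widehat U$ (not just the nonlinear part) into the change of measure, which is an inessential variant.
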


\begin{proof}
 We start with getting rid of the non-linearity by using Girsanov's formula (see Dawson \cite{dawson} Theorem  7.2.2).
There exists a probability measure    $\mathbb{Q}$ on the path space such that  for all $f\in \mathcal{D}(A)$:
\begin{align}
\widetilde{M}^f_t =  & M^f_t + \int_0^t \int_{\R^d}\Big(\widehat{b}(x)-\widehat{d}(x)-\bar{X}_sU(x)\Big)f(x) \bar{X}_s(dx)\,ds\nonumber\\
= & \langle \bar{X}_t,f\rangle - \langle \bar{X}_0,f\rangle - \int_0^t \int_{\R^d} \widehat{(p\,r)}(x)Af(x) \bar{X}_s(dx)\, ds\label{defMtilde}
\end{align}is a square integrable martingale with bracket (\ref{crochetmartingalepremiercas}).\\

\noindent The uniqueness of the solution the martingale problem (\ref{defMtilde})-(\ref{crochetmartingalepremiercas}) is proved by Roelly and Rouault \cite{roellyrouault}. It is based on the branching property of $\bar{X}$ under $\mathbb{Q}$ which allows us to  characterize the  Laplace functional $L_t f$ of $X_t$ by its cumulant $U_tf$:
\begin{equation}
 L_t(f)=\E\Big(e^{\langle \bar{X},f \rangle} \Big) = \E\Big(e^{\langle \bar{X}_0,U_tf \rangle}\Big).
\end{equation}
The latter is the unique positive solution of the following PDE:
\begin{equation}
 \frac{\partial u}{\partial t}(t,x)=Au(t,x)-\widehat{r}(x)u^2(t,x),\qquad u(0,x)=f(x),\label{edp_cumulant}
\end{equation}
(see \eg Pazy \cite[Th. 1.4 and 1.5 p.185 and 187]{pazy}).

\noindent From the uniqueness of the solution of (\ref{defMtilde})-(\ref{crochetmartingalepremiercas}), we deduce classically the uniqueness of the solution of (\ref{martingalepremiercas})-(\ref{crochetmartingalepremiercas}). (See  for example Evans-Perkins \cite{evansperkins}, Fitzsimmons \cite{fitzsimmons}).
\end{proof}

\bigskip
\noindent The proof of Theorem \ref{propconvergence} is now complete.


\section{Examples}\label{sectionexemples}

Let us develop and compare two examples, which only differ by the function $r(x,a)$.

\subsection{Logistic physical-age and size-structured population}\label{exemple_r(x)=1}

In M\'{e}l\'{e}ard and Tran \cite{meleardtran}, the following example for a population structured by age $a\in \R_+$ and size $x\in \X=[0,x_0]$ is considered:
\begin{align}
& b(x,a)=x(x_0-x)e^{-a}\ind_{[0,x_0]}(x)\mbox{ for }x_0>0,\nonumber\\
& d(x,a)=d_0,\quad U((x,a),(y,\alpha))=\eta(x_0-x),\label{ex2}
\end{align}with $x_0=4$, $d_0=1/4$ and $\eta=1.7$. Because reproduction needs energy, and since this energy depends on the size of the created offspring, very small or big individuals are disadvantaged. Individuals of intermediate size $x=2$ have the highest birth rate. The competition term in contrast favors bigger individuals. Hence there is a trade-off between competitiveness and reproduction. The decreasing exponential in age describes a senescence phenomenon: older individuals reproduce less than their young competitors.  In \cite{meleardtran}, partial differential equation limits, Trait substitution sequence  and Canonical equations are considered.
Here we consider the  superprocess approximation described in the above sections, with $r(x,a)=1$ and $\pi^n(x,dh)$ a centered Gaussian kernel with variance ${\sigma^2\over n}$ conditioned on  $[0,x_0]$, as in Example \ref{exemple_noyau}.

\me Computation gives  $\widehat{m}(x,a)=e^{-a}$ so that $\ X_t(dx,da)=\bar{X}_t(dx)\otimes e^{-a}\,da\ $ becomes in this particular case a product measure. As soon as the population survives, the age distribution "stabilizes" around an exponential distribution with parameter $1$, as  seen on the simulations of Figure \ref{simu1}. With the age distribution $\widehat{m}(x,a)=e^{-a}$, we get
\begin{align}
\widehat{b}(x)=x(x_0-x)\int_{\R_+}e^{-2a}da=\frac{x(x_0-x)}{2} \quad ; \quad \widehat{d}(x)=d_0\quad \widehat{U}(x,y)=\eta(x_0-x).\label{bdUchapeau-exemple1}
\end{align}The martingale problem (\ref{martingalepremiercas}) becomes here:
\begin{align}
 & M^{f}_t=    \langle \bar{X}_t,f\rangle -\langle \bar{X}_0,f\rangle
-\int_0^t \int_{\X} \Big(
p \frac{\sigma^2}{2} \Delta f(x)\nonumber\\
 & \qquad \qquad \qquad +\big[\frac{x(x_0-x)}{2}-\big(d_0+\eta(x_0-x)\langle \bar{X}_s,1\rangle\big)\big]f(x)\Big)\bar{X}_s(dx)\, ds,\label{pbm_simu1}\\
& \langle M^f\rangle_t= \int_0^t \int_{\X}2 f^2(x)\bar{X}_s(dx)\, ds.\nonumber
\end{align}

\begin{figure}[!ht]
\begin{center}
\begin{tabular}{ccc}
  (a) &  \includegraphics[width=5cm,angle=0,trim= 0cm 0cm 0cm 0cm]{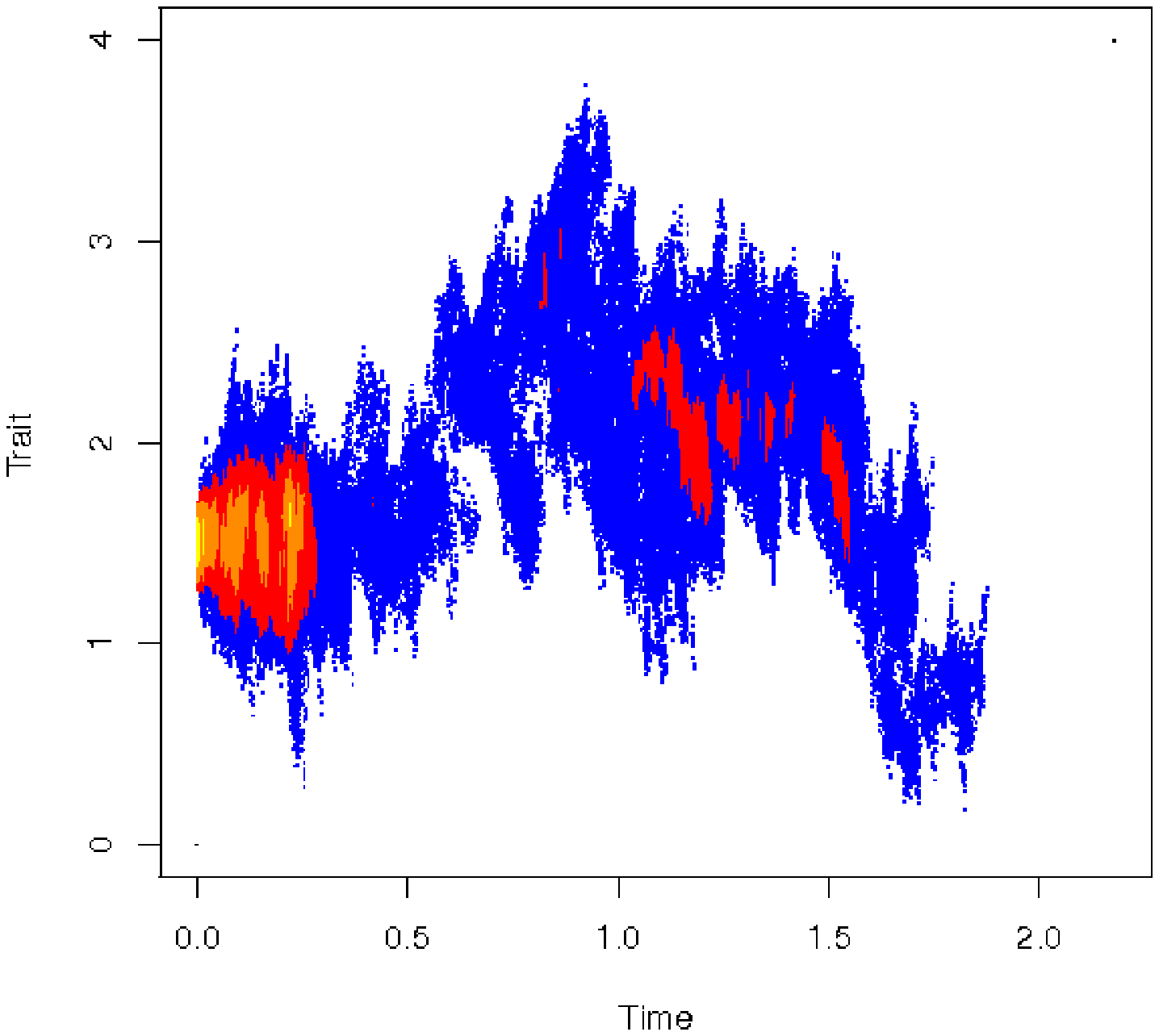} & \includegraphics[width=5cm,angle=0,trim= 0cm 0cm 0cm 0cm]{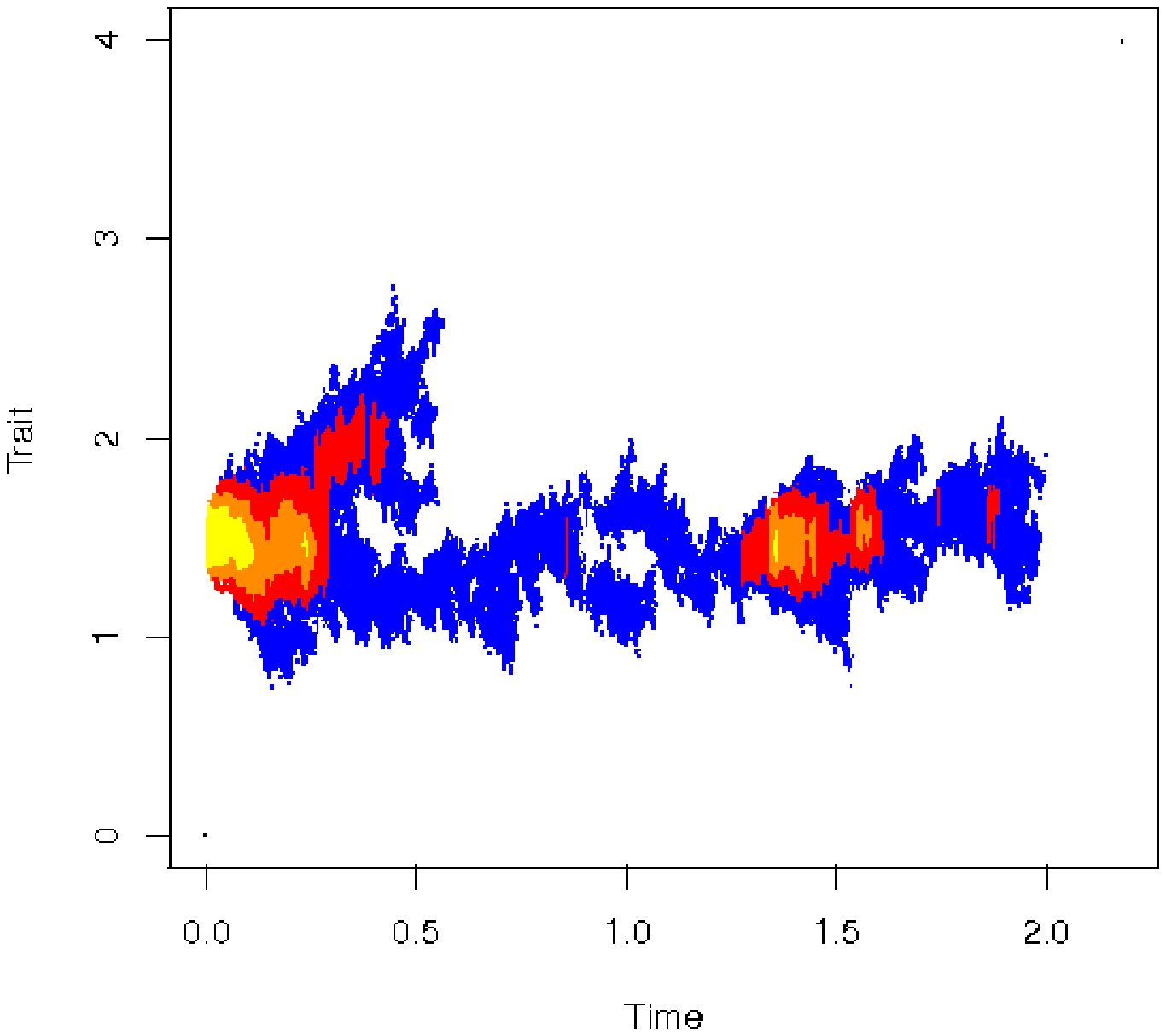}\\
(b) & \includegraphics[width=5cm,angle=0,trim= 0cm 0cm 0cm 0cm]{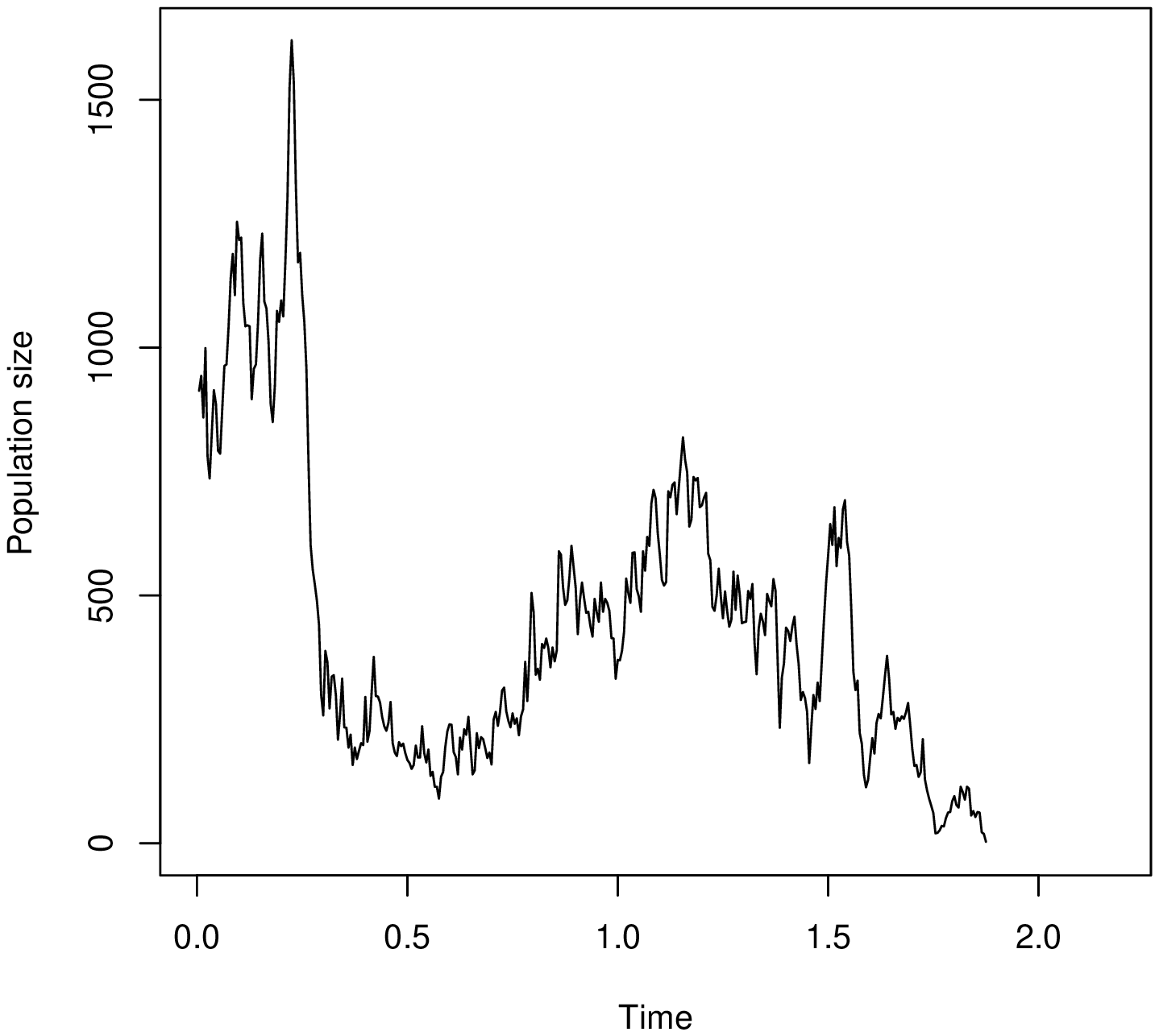} & \includegraphics[width=5cm,angle=0,trim= 0cm 0cm 0cm 0cm]{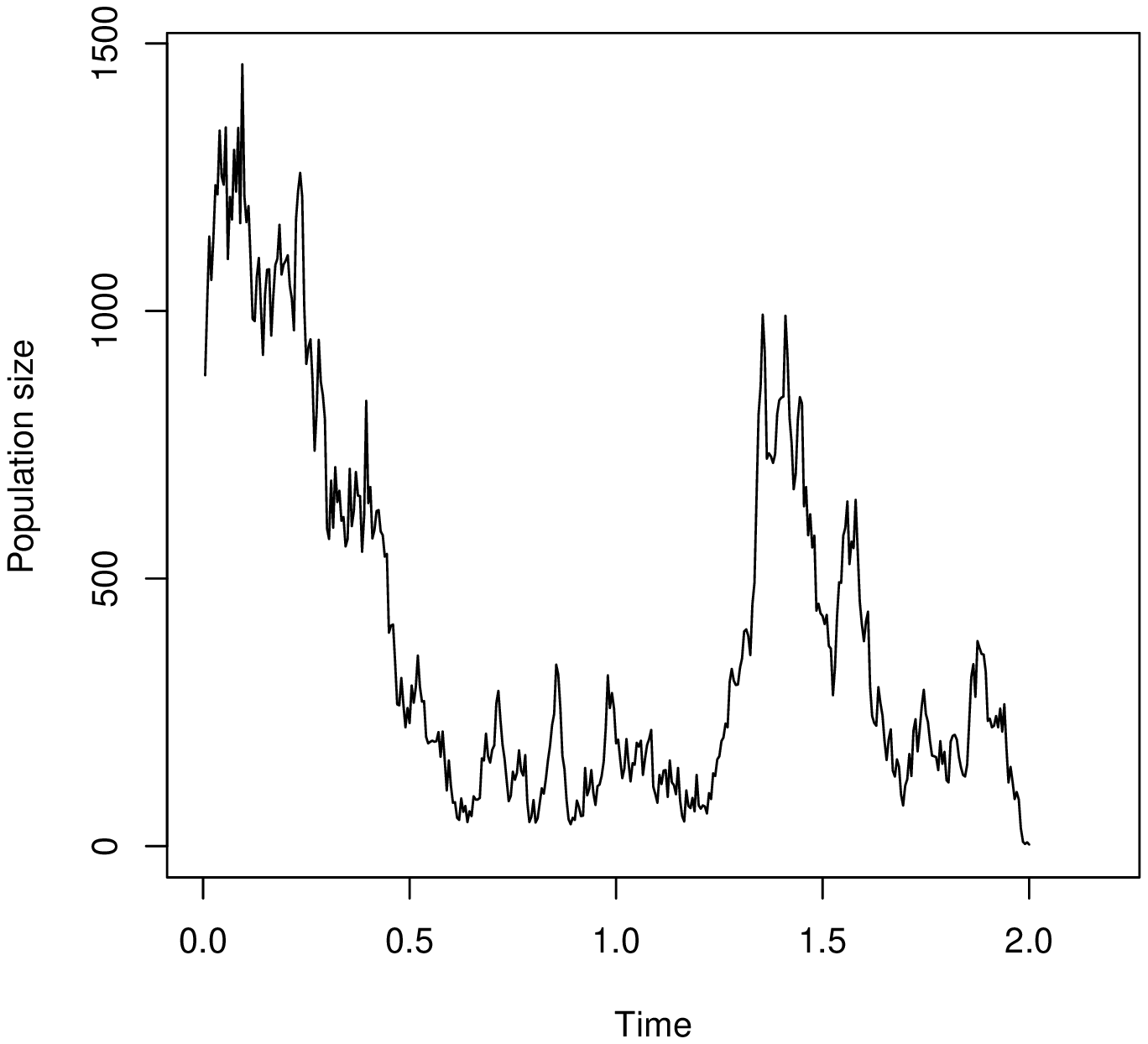} \\
(c) & \includegraphics[width=5cm,angle=0,trim= 0cm 0cm 0cm 0cm]{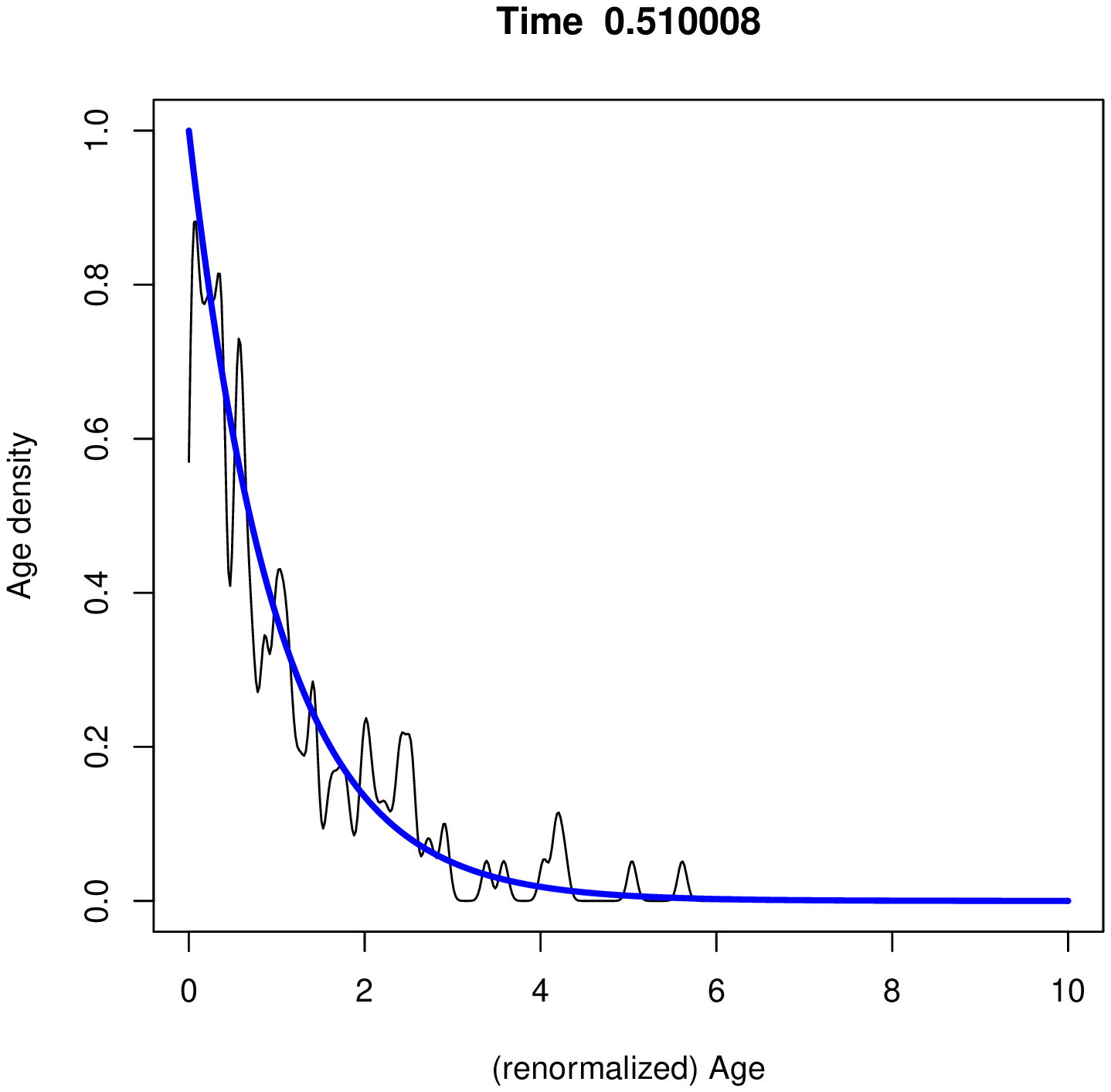} & \includegraphics[width=5cm,angle=0,trim= 0cm 0cm 0cm 0cm]{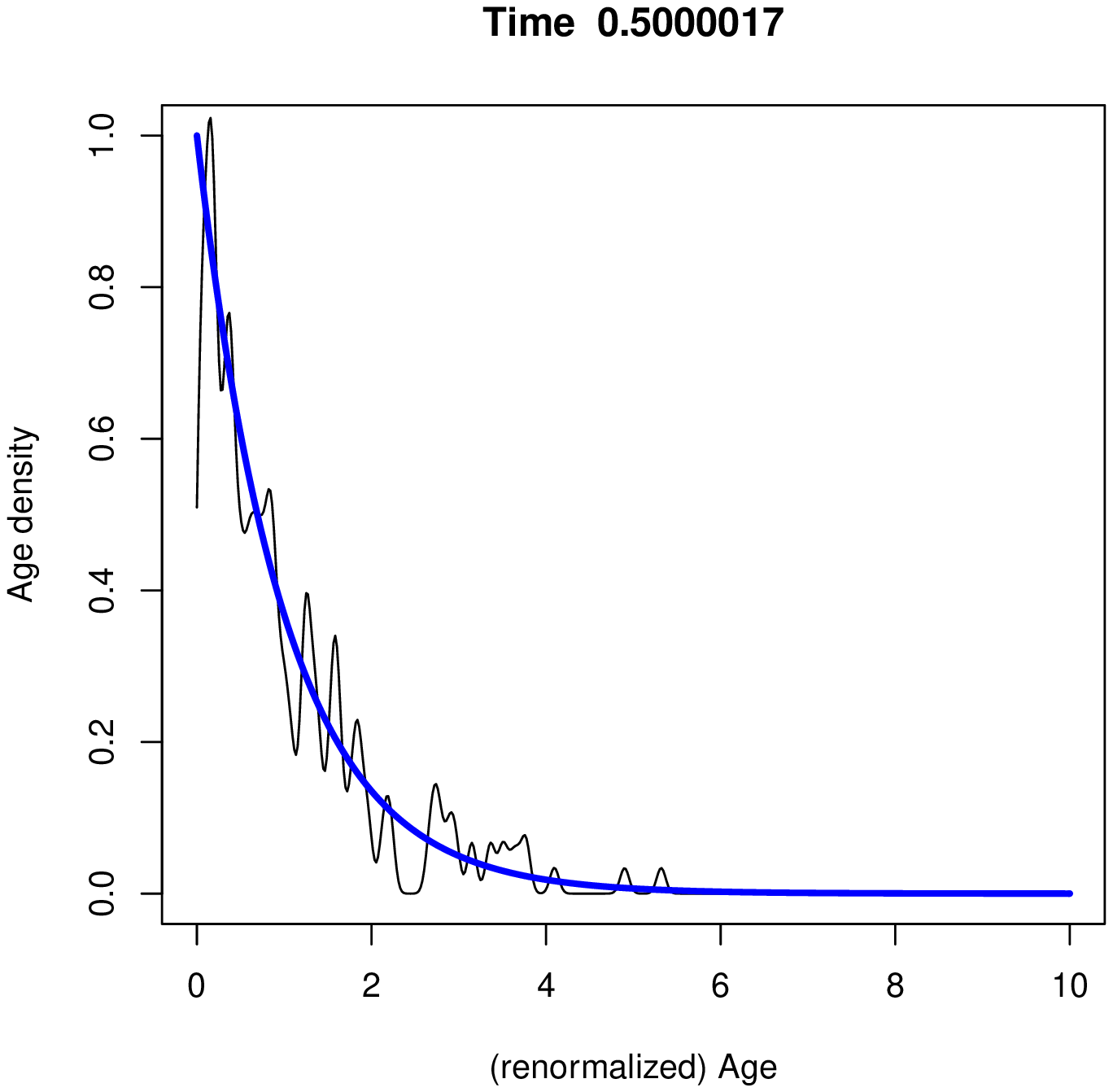}
\end{tabular}
\caption{\textit{Simulation of the individual-based process $X^n$, with $n=1000$ and discretization step $\Delta t=0.005$. The system is started with $1000$ particles of trait $x=1.5$. First line: $\sigma=1$. Second line: $\sigma=0.8$. (a): Support  of the process $\bar{X}^n$  (with time in abscissa and trait in ordinate).
(b): Evolution of the population size. (c): Age distribution for $t=0.5$. It can be checked that the age distribution converges to an exponential of parameter 1 (plain line).}}\label{simu1}
\end{center}
\end{figure}

\noindent In Figure  \ref{simu1}, two sets of simulations are presented, depending on two different mutation variances $\sigma^2$. As expected, when $\sigma$ increases, the traits vary more rapidly, and the irregularity of the trait support appears more strikingly. On both simulations of Fig. \ref{simu1}, extinction happens in a fast time. Almost-sure extinction is due to the logistic interaction, as proved in the following proposition.

\begin{prop}\label{prop_extinctionps}There is almost-sure extinction of the superprocess (\ref{pbm_simu1}).
\end{prop}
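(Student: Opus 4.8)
The plan is to reduce the statement to the one–dimensional total–mass process $Z_t=\langle\bar X_t,1\rangle$, to dominate $Z$ by a one–dimensional logistic–Feller diffusion, and to show that the latter is absorbed at $0$ in finite time almost surely.

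First I would apply the martingale problem \eqref{pbm_simu1} to $f\equiv 1$, which lies in $\mathcal D(A)$ with $Af=\tfrac{p\sigma^{2}}{2}\Delta 1=0$. This gives
\begin{equation*}
Z_t=Z_0+M_t+\int_0^t\int_{\X}\Big[\tfrac{x(x_0-x)}{2}-d_0-\eta(x_0-x)Z_s\Big]\bar X_s(dx)\,ds,
\end{equation*}
where $M=M^{1}$ is a continuous square–integrable martingale with $\langle M\rangle_t=2\int_0^tZ_s\,ds$; on a possibly enlarged probability space one writes $M_t=\int_0^t\sqrt{2Z_s}\,dB_s$ for a Brownian motion $B$. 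The structural point is that the birth rate $\widehat b(x)=\tfrac{x(x_0-x)}{2}$ and the competition kernel $\widehat U(x,y)=\eta(x_0-x)$ share the factor $x_0-x$ that vanishes at the competition–free trait $x=x_0$: since $0\le x\le x_0$, the per–capita growth rate is bounded above, uniformly in $x$, by
\begin{equation*}
\tfrac{x(x_0-x)}{2}-d_0-\eta(x_0-x)Z_s\ \le\ \eta(x_0-x)\Big(\tfrac{x_0}{2\eta}-Z_s\Big)-d_0\ \le\ \beta(Z_s),\qquad \beta(z):=\eta x_0\Big(\tfrac{x_0}{2\eta}-z\Big)^{+}-d_0 .
\end{equation*}
Hence, setting $b(z):=z\,\beta(z)$, one obtains $Z_t\le Z_0+\int_0^t b(Z_s)\,ds+\int_0^t\sqrt{2Z_s}\,dB_s$.

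Next I would invoke the comparison theorem for one–dimensional stochastic differential equations with H\"older-$\tfrac12$ diffusion coefficient (pathwise uniqueness in the spirit of Yamada--Watanabe, comparison as in Ikeda--Watanabe): since $b$ is locally Lipschitz and $z\mapsto\sqrt{2z}$ is $\tfrac12$-H\"older, the equation $d\widetilde Z_t=b(\widetilde Z_t)\,dt+\sqrt{2\widetilde Z_t}\,dB_t$, $\widetilde Z_0=Z_0$, with $0$ absorbing, has a unique strong solution, it does not explode (for large $z$ the drift is $-d_0z$), and $0\le Z_t\le\widetilde Z_t$ for all $t$, a.s. It then suffices to show that $\widetilde Z$ hits $0$ in finite time a.s. For this I would carry out the scale–function analysis of the diffusion on $[0,\infty)$ with generator $b(z)\partial_z+z\partial_z^{2}$: its scale density is $s'(z)=\exp\!\big(-\int_1^z\beta(u)\,du\big)$. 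As $z\to0$, $\beta(u)\to\tfrac{x_0^{2}}{2}-d_0$ is finite, so $s(0+)>-\infty$ and $0$ is an accessible absorbing boundary; as $z\to\infty$, $\beta(u)\equiv-d_0<0$, so $s'(z)=\exp(d_0z+\mathrm{cst})$ and $s(+\infty)=+\infty$, i.e. $+\infty$ is inaccessible. Consequently $\P_{Z_0}(\widetilde Z\text{ hits }0)=\tfrac{s(\infty)-s(Z_0)}{s(\infty)-s(0)}=1$, the hitting time is a.s. finite, and since $0$ is absorbing $\widetilde Z$—hence $Z$, hence $\bar X$—is identically $0$ past a finite random time: almost–sure extinction.

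The main obstacle is to make the comparison step fully rigorous: the drift of $Z$ is not a function of $Z_t$ alone but only dominated $\P$-a.s. by $b(Z_s)$, so one must use the version of the comparison principle in which the smaller process is allowed an arbitrary adapted drift bounded above by $b(Z_\cdot)$, together with the Brownian representation of $M$ (after an enlargement of the probability space) and the usual care demanded by the H\"older-$\tfrac12$ coefficient; all of this is classical. A secondary point worth emphasizing is why mass escaping the logistic competition by concentrating near $x=x_0$ still dies out—there the birth rate vanishes too, while $d_0>0$ persists, which is precisely what makes $\beta(z)\to-\infty$ and renders $+\infty$ inaccessible for $\widetilde Z$. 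One may also bypass the scale computation: on $[0,\tfrac{x_0}{2\eta}]$ the process $\widetilde Z$ coincides with a logistic Feller diffusion, which is extinguished in finite time a.s. (Lambert), and above $\tfrac{x_0}{2\eta}$ the drift $-d_0\widetilde Z$ is strongly mean–reverting, so an excursion/restart argument yields extinction as well.
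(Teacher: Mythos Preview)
Your proposal is correct and follows the same overall strategy as the paper: write the total--mass equation, bound the trait--dependent drift $\Lambda(x,Z)$ uniformly in $x$, dominate $Z_t=\langle\bar X_t,1\rangle$ by a one--dimensional diffusion driven by the same noise, and show that this diffusion is absorbed at $0$ in finite time. The differences are in the execution rather than the idea.

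Your drift bound $\Lambda(x,z)\le \beta(z)=\eta x_0\big(\tfrac{x_0}{2\eta}-z\big)^+-d_0$ is cleaner than the paper's: the paper splits into two regimes in $x$ (around $x_0-(2d_0/x_0-\zeta)$, with an auxiliary parameter $\zeta$) to produce a piecewise bound depending on whether $Z$ exceeds a threshold $m_0$, and then dominates by a process with linear negative drift above $m_0$ and an additive immigration term below. For the extinction of the dominating diffusion, you use the scale function / Feller boundary classification (with the alternative of Lambert's result on logistic Feller diffusions), whereas the paper argues via a Foster--Lyapunov--type bound on $\E_z(\tau_{m_0})$ (Meyn--Tweedie) and then a Girsanov change of measure reducing to a subcritical Feller diffusion. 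Your route is somewhat more direct; the paper's has the advantage of not relying on boundary classification.

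Two small points. First, in your heuristic paragraph you write ``$\beta(z)\to-\infty$''; in fact $\beta(z)\to -d_0$ as $z\to\infty$ (it is $b(z)=z\beta(z)$ that diverges to $-\infty$), but this does not affect the conclusion $s(+\infty)=+\infty$. Second, the scale condition $s(0^+)>-\infty$ alone gives that $0$ is attracting; to conclude the hitting time is a.s.\ finite you should note that $0$ is an \emph{exit} boundary (the Feller integral is finite here because the diffusion coefficient is $\sqrt{2z}$ and the drift vanishes linearly at $0$), or simply invoke the logistic--Feller alternative you mention, which settles it.
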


\begin{proof}
The mass of the super-process satisfies the following equation:
\begin{align}
 & \langle \bar{X}_t,1\rangle =    \langle \bar{X}_0,1\rangle
+\int_0^t \int_{[0,x_0]} \Lambda(x,\langle X_s,1\rangle) \bar{X}_s(dx)\,ds +M^1_t\nonumber\\
& \langle M^1\rangle_t= \int_0^t 2\langle \bar{X}_s,1\rangle\, ds,\nonumber\\
& \mbox{where }  \Lambda(x,Z)=\frac{x(x_0-x)}{2}-\big(d_0+\eta(x_0-x)Z\big).\label{masse_simu1}
\end{align}This equation is not closed for the mass process, since the drift depends on the trait distribution. Our purpose is to upper-bound $\Lambda(x,Z)$ so that $\langle \bar{X}_.,1\rangle$ can be stochastically dominated by a Feller diffusion with negative drift, that goes extinct almost surely.\\
In the case where $x>x_0-({2d_0\over x_0} -\zeta)$ with $\zeta\in (0,{2d_0\over x_0} \wedge 1)$ and since $x\in (0,x_{0})$, one gets
\begin{align}
\Lambda(x,Z)= & -d_0+(x_0-x)\Big(\frac{x}{2}-\eta Z\Big)\nonumber\\
\leq & -d_0+\Big(\frac{2d_0}{x_0}-\zeta\Big)\times \frac{x_0}{2}=-\frac{\zeta x_0}{2}.\label{pp1}
\end{align}
In the case where $x<x_0-({2d_0\over x_0}-\zeta)$, then $0<{2d_0\over x_0} -\zeta \leq x_0-x\leq x_0$ and depending on the sign of $x/2-\eta Z$:
\begin{align}
\Lambda(x,Z)\leq & -d_0+\max\Big(x_0\big(\frac{x_0}{2}-\eta Z\big)\ ; \
\big(\frac{2d_0}{x_0}-\zeta\big) \big(\frac{x_0}{2}- \eta Z\big)\Big)\nonumber\\
 \leq & \frac{x_0^2}{2}-d_0- \eta \big(\frac{2d_0}{x_0}-\zeta\big) Z.\label{pp2}
\end{align}
Since the upper bounds in (\ref{pp1}) and (\ref{pp2}) are equal when the mass $Z$ equals $m_{0}$ defined by
\begin{equation}
m_0=\frac{\frac{x_0(x_0+\zeta)}{2}-d_0}{\eta \big(\frac{2d_0}{x_0}-\zeta\big)},
\end{equation} we  thus get in any case that
\begin{align}
\Lambda(x,Z)\leq -\frac{\zeta x_0}{2}\ind_{Z \geq m_0}+\Big(\frac{x_0^2}{2}-d_0\Big)\ind_{Z \leq m_0}.\label{defBs}
\end{align}Hence, the process $\langle \bar{X}_.,1\rangle$ can be stochastically dominated by the following positive process:
\begin{equation}
Z_t=\langle \bar{X}_0,1\rangle+\int_0^t \Big(- \frac{\zeta x_0}{2} Z_s+ m_0\big(\frac{x_0(x_0+\zeta)}{2}-d_0\big)\ind_{Z_s\leq m_0}\Big)ds+\int_0^t \sqrt{2 Z_s}dB_s \label{processusZ}
\end{equation}where $B$ is a standard Brownian motion. \\
We can adapt the results of Meyn and Tweedie \cite{meyntweedie} to prove almost sure extinction. For $u\in \R_+$, let us denote by $\tau_{u}=\inf\{t\geq 0,\ Z_t\leq u\}$ and let $z=\langle \bar{X}_0,1\rangle>0$. Either $z\leq m_0$ and then $\tau_{m_0}=0$, or $z>m_0$. In the latter case, let us consider $M>z>m_{0}$ and let $\rho_M=\inf\{t\geq 0,\ Z_t\geq M\}$. We have
\begin{align}
  \E_z\Big(Z_{\tau_{m_0}\wedge \rho_M}-z+\frac{\zeta x_0}{2}\int_0^{\tau_{m_0}\wedge \rho_M}Z_s\, ds - \int_{0}^{\tau_{m_0}\wedge \rho_M} \sqrt{2 Z_s}dB_s \Big)=0.
 \end{align}
  By the uniform integrability of the fourth term and the optional stopping theorem, we have $ \E_z\Big( \int_{0}^{\tau_{m_0}\wedge \rho_M} \sqrt{2 Z_s}dB_s \Big)=0$. Since moreover $Z_{s}\geq m_{0}$ for $s\in [0,\tau_{m_0}\wedge \rho_M]$, we  deduce that
\begin{align}
\label{fin} m_0 \ \E_z\big(\tau_{m_0}\wedge \rho_M\big)\leq \E_z\Big(\int_0^{\tau_{m_0}\wedge \rho_M}Z_s\ ds \Big) \leq \frac{2z}{\zeta x_0}.
\end{align} It can easily be proved that for all $T>0$, $\mathbb{E}(\sup_{t\leq T} (Z_{t})^2)<\infty$, implying that $\rho_{M}$ tends to infinity with $M$. Thus, \eqref{fin} provides that for all $z>0$, $\P_z(\tau_{m_0}<+\infty)=1$. By Girsanov's theorem, there exists a probability measure under which the process $Z$ is a sub-critical Feller diffusion. It turns out that  $\P_{m_0}(\tau_0\wedge \rho_M<+\infty)=1$. Standard computation  using the strong Markov property yields $\P_z(\tau_0<+\infty)=1$.
\end{proof}

\subsection{Logistic biological-age and size-structured population}\label{sectionex2}

In this section, the trait $x\in [x_1,x_2]\subset (0,x_0)$ (with $x_1, x_2>0$) is linked to the rate of metabolism, which measures the energy expended by individuals, and is often an increasing function of the body size. Ageing may result from toxic by-products of the metabolism.  This leads us to introduce a biological age $xa$, where $a$ is the physical age and where $x$ can be interpreted as the ageing velocity equals. In this example, we consider $r(x,a)=xa$ so that biologically older individuals give birth and die with higher rate, the other parameters being chosen as in Subsection 4.1. For a review on body size, energy metabolism and ageing, we refer the reader to \cite{speakman}. In our example:
\begin{align}
\widehat{m}(x,a)=\frac{2\sqrt{x}e^{-\frac{xa^2}{2}}}{\sqrt{2\pi}}\ind_{[0,+\infty)}(a).
\end{align}We recognize the Gaussian distribution with variance $1/x$ conditioned on being positive. Then:
\begin{align}
\widehat{b}(x)= & \frac{2x^{3/2}(x_0-x)}{\sqrt{2\pi}}\int_0^{+\infty}e^{-\frac{x(a^2+2a/x)}{2}}da
=\frac{2x(x_0-x)e^{\frac{1}{2x}}}{\sqrt{2\pi}}\int_{1/\sqrt{x}}^{+\infty}e^{-\frac{\alpha^2}{2}}d\alpha\nonumber\\
= & 2x(x_0-x)e^{\frac{1}{2x}}\Phi\Big(-\frac{1}{\sqrt{x}}\Big),\\
\widehat{r}(x)= & \frac{2x^{3/2}}{\sqrt{2\pi}} \int_0^{+\infty} a e^{-\frac{xa^2}{2}}da
=\sqrt{\frac{2x}{\pi}}\int_0^{+\infty}\alpha e^{-\frac{\alpha^2}{2}}d\alpha=\sqrt{\frac{2x}{\pi}},
\end{align}where $\Phi$ is the cumulative distribution function of the standard Gaussian distribution. The functions $\widehat{d}(x)$ and $\widehat{U}(x,y)$ are unchanged and given by (\ref{bdUchapeau-exemple1}).
The martingale problem (\ref{martingalepremiercas}) becomes here:
\begin{align}
 & M^{f}_t=  \langle \bar{X}_t,f\rangle -\langle \bar{X}_0,f\rangle
-\int_0^t \int_{\X} \Big(
p \sqrt{\frac{x}{2\pi}}\sigma^2\Delta\varphi(x)+\big[2x(x_0-x)e^{\frac{1}{2x}}\Phi\Big(-\frac{1}{\sqrt{x}}\Big)\nonumber\\
& \qquad \qquad \qquad -\big(d_0+\eta(x_0-x)\langle \bar{X}_s,1\rangle\big)\big]f(x)\Big)\bar{X}_s(dx)\, ds \label{pbm_ex2}\\
& \langle M^f\rangle_t= \int_0^t \int_{\X}2\sqrt{\frac{2x}{\pi}}f^2(x)\bar{X}_s(dx)\, ds.\nonumber
\end{align}

\begin{figure}[!ht]
\begin{center}
\begin{tabular}{ccc}
  (a) & \includegraphics[width=5cm,angle=0,trim= 0cm 0cm 0cm 0cm]{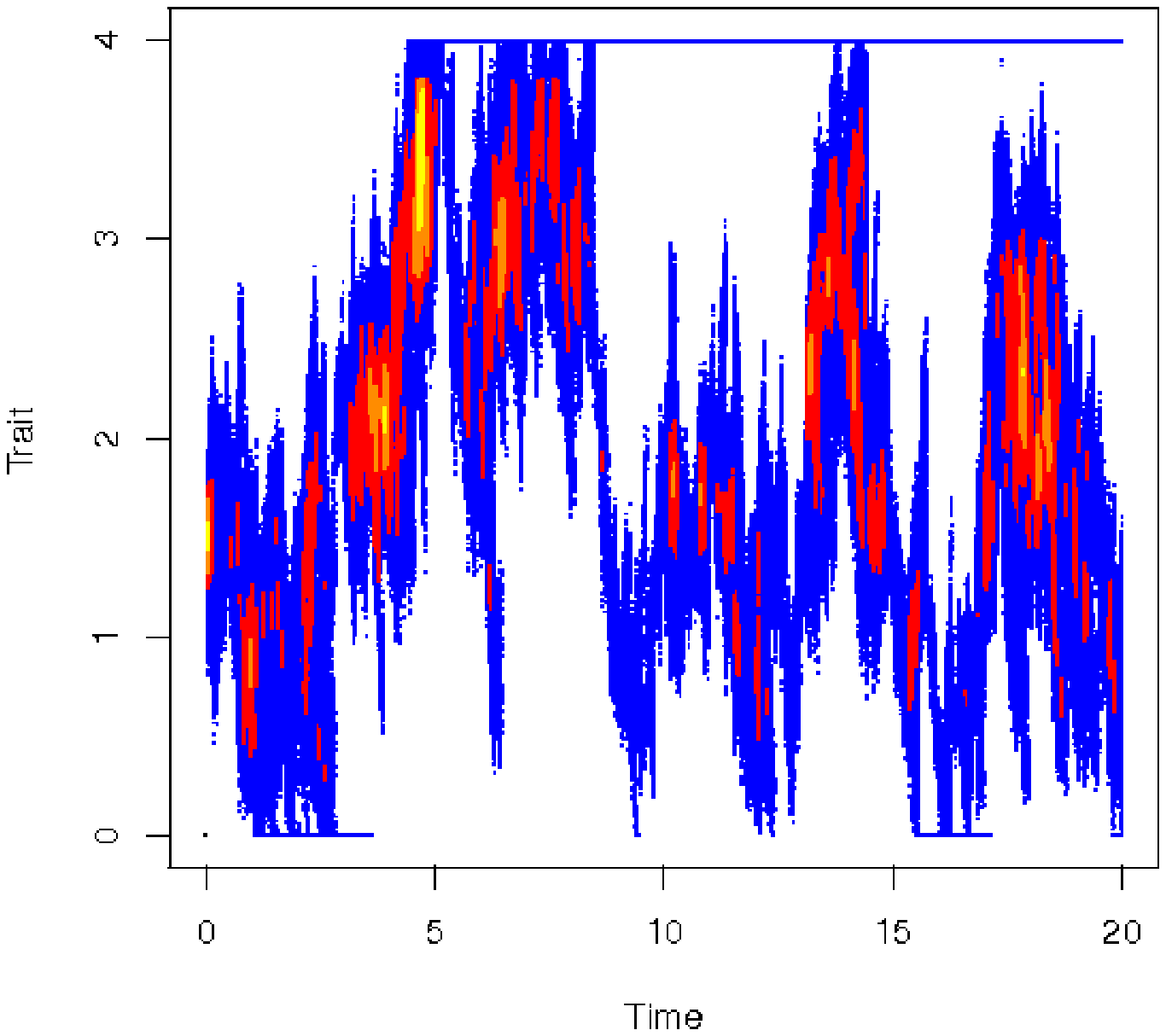} & \includegraphics[width=5cm,angle=0,trim= 0cm 0cm 0cm 0cm]{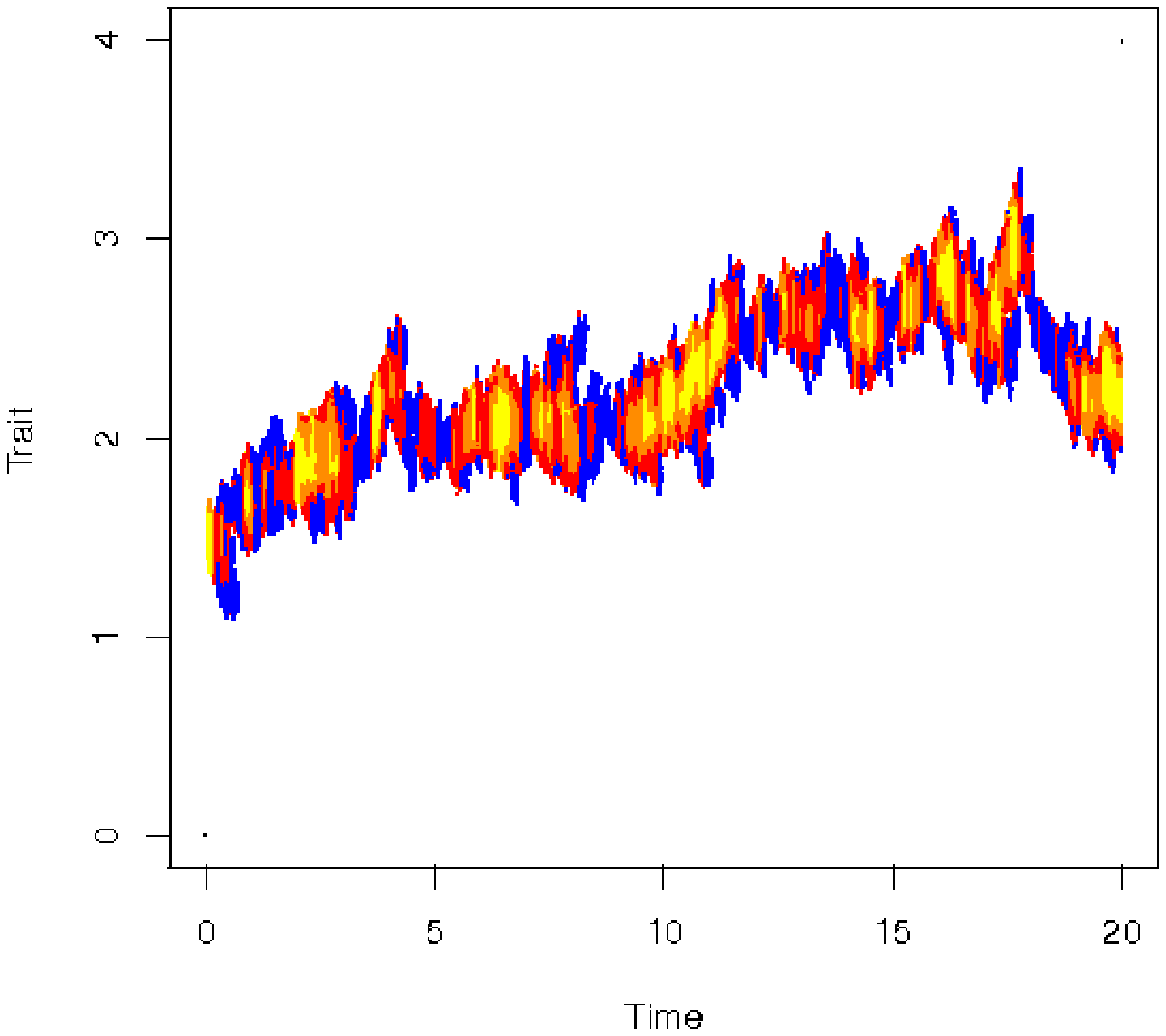}\\
 (b) & \includegraphics[width=5cm,angle=0,trim= 0cm 0cm 0cm 0cm]{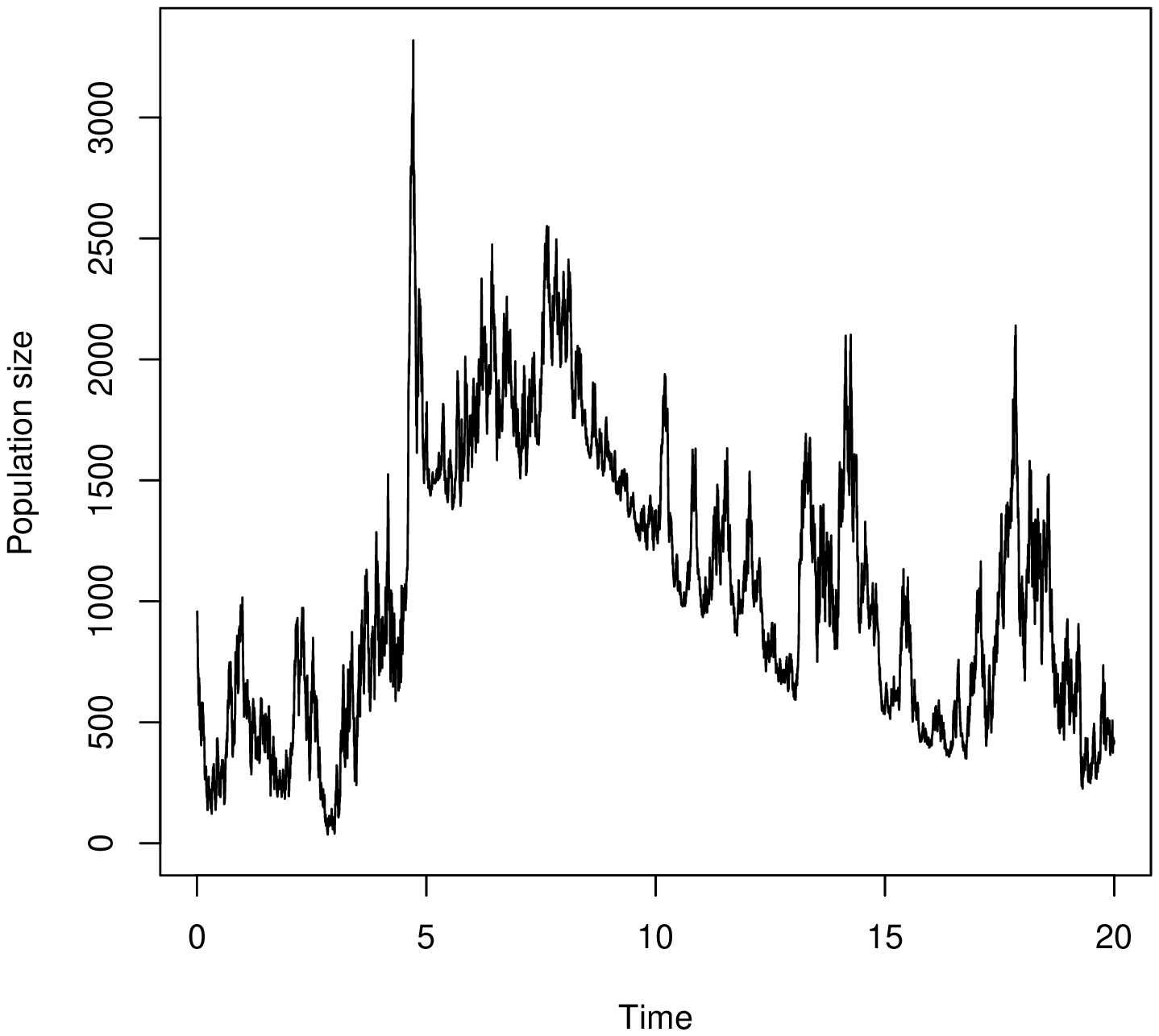} & \includegraphics[width=5cm,angle=0,trim= 0cm 0cm 0cm 0cm]{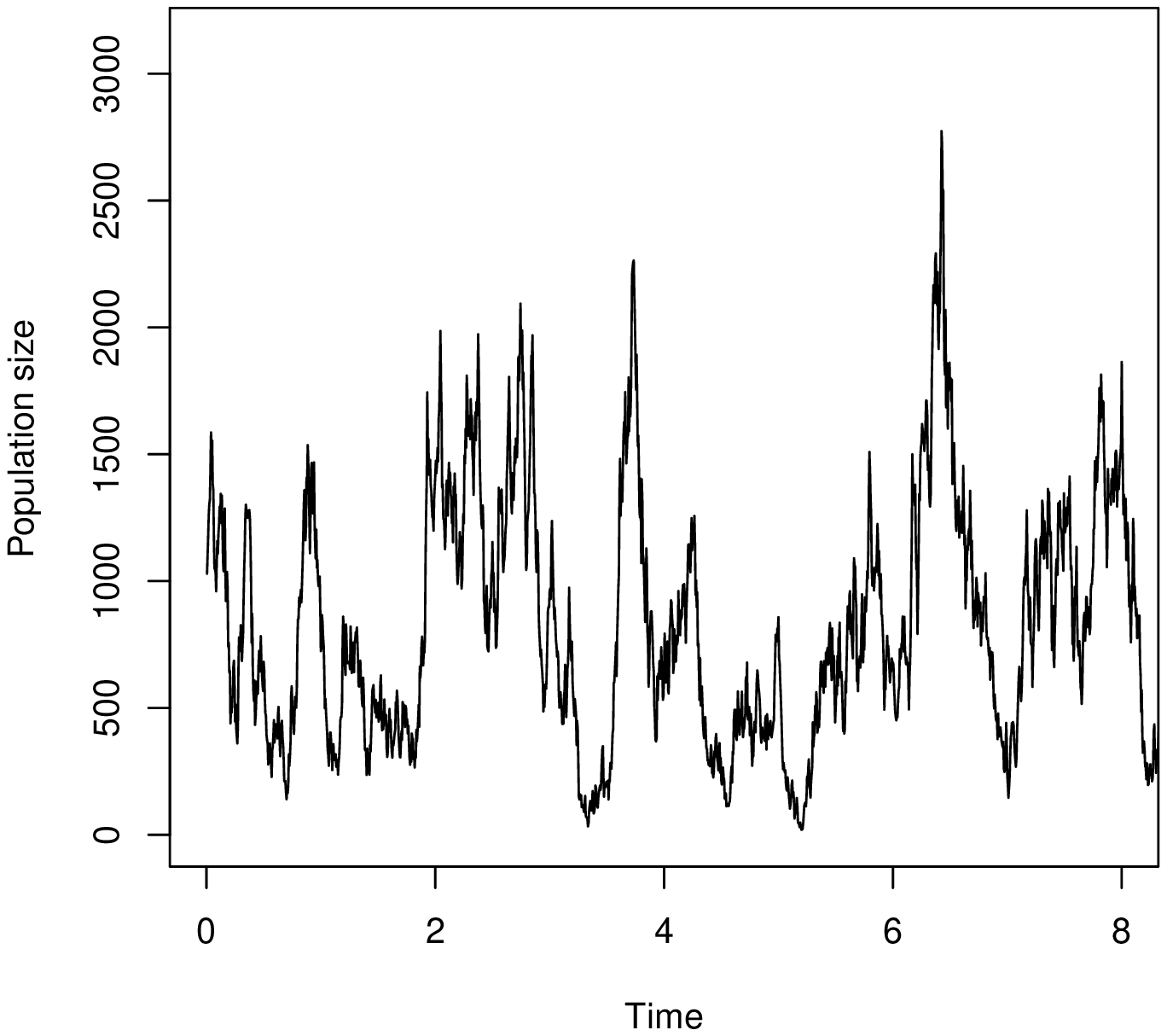}\\
(c) & \includegraphics[width=5cm,angle=0,trim= 0cm 0cm 0cm 0cm]{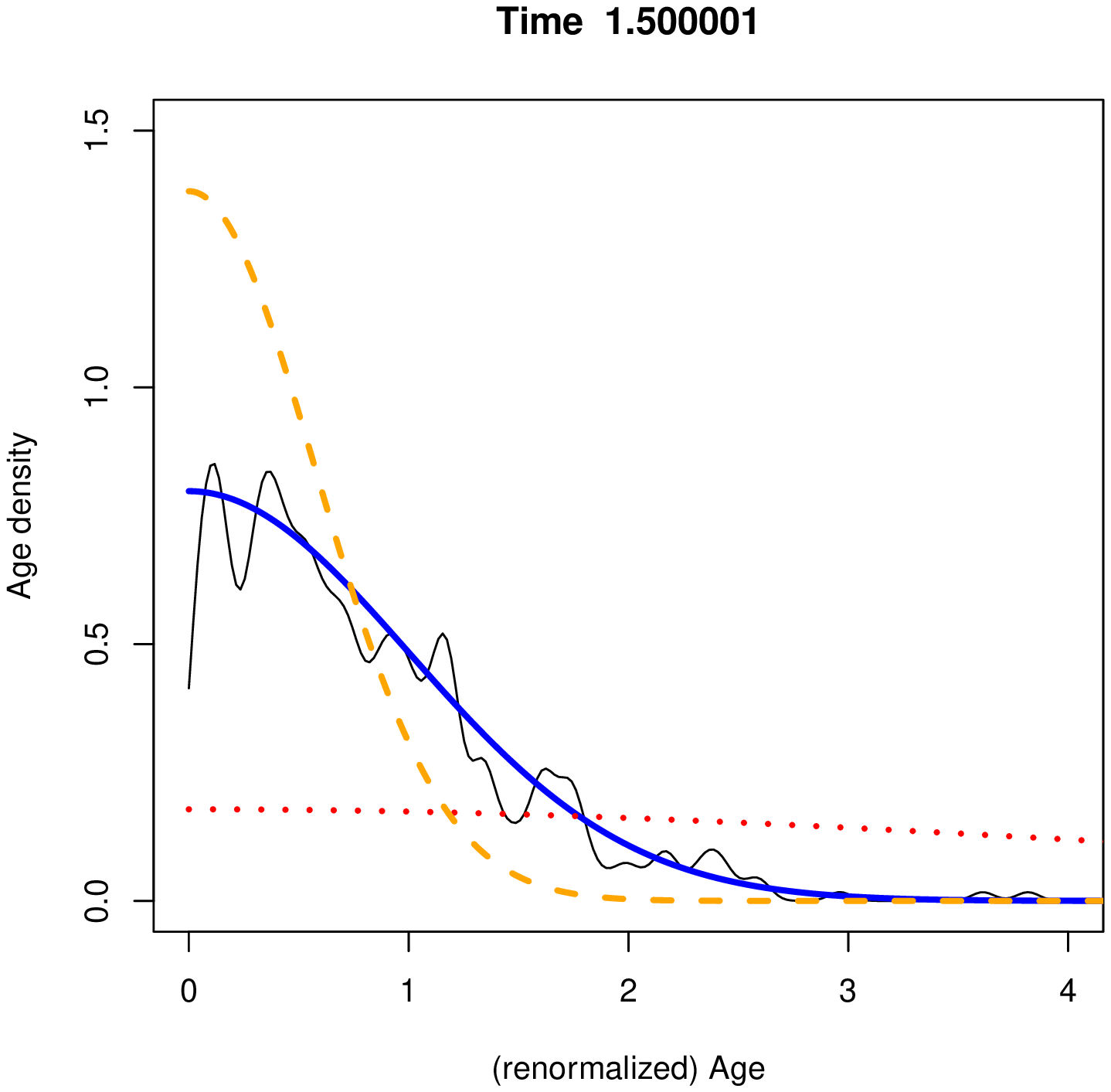} & \includegraphics[width=5cm,angle=0,trim= 0cm 0cm 0cm 0cm]{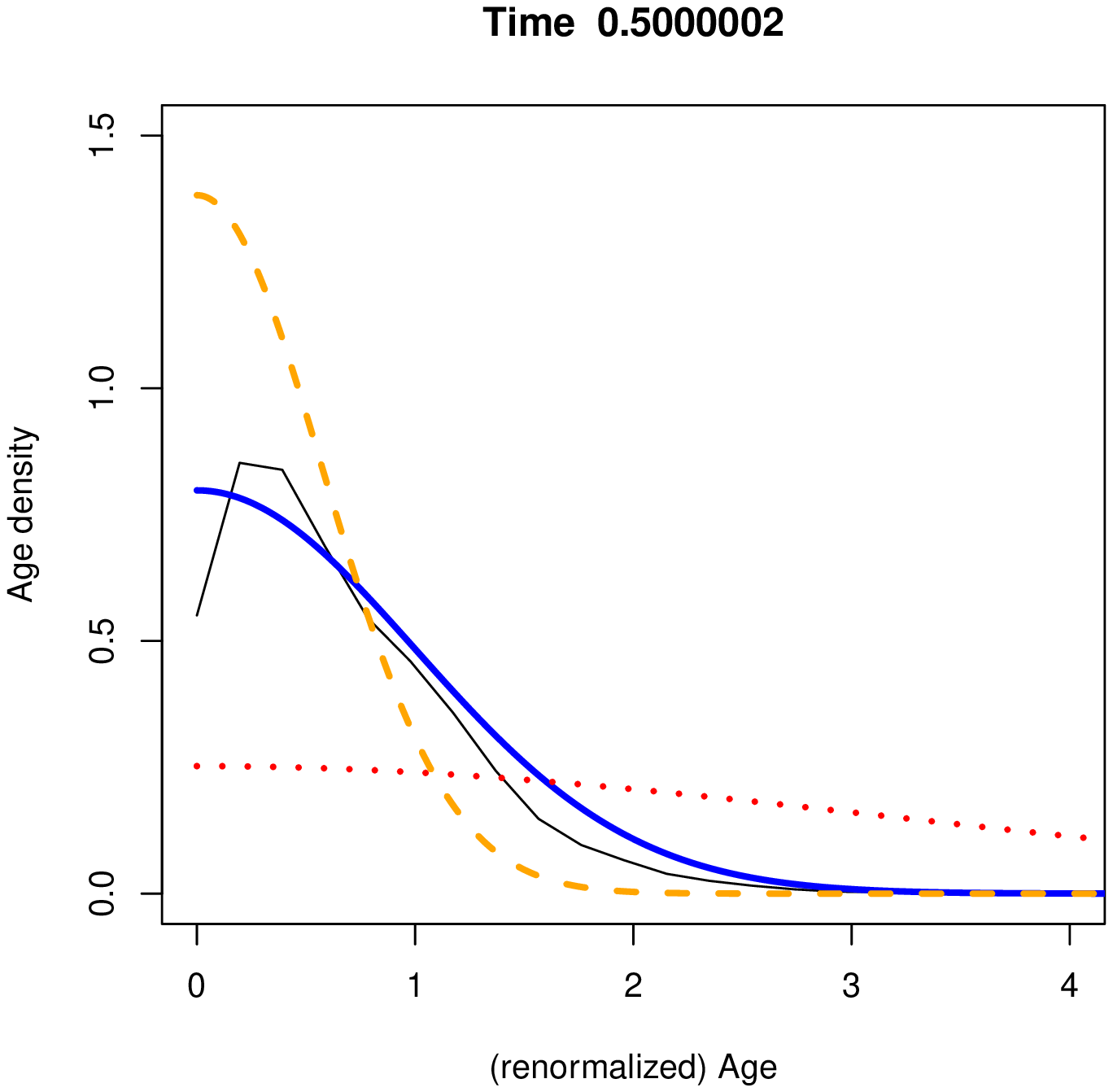}
\end{tabular}
\caption{\textit{Simulation of the individual-based process $X^n$, with $n=1000$ and discretization step $\Delta t=0.005$. The system is started with $1000$ particles of trait $x=1.5$. First line: $\sigma=1$, Second line: $\sigma=0.2$. (a): Support of the process $\bar{X}^n$  (with time in abscissa and trait in ordinate). (b): Evolution of the population size. (c): Marginal age distribution for $t=0.5$. For comparison, we draw the density $\widehat{m}(1,a)$ (plain line), $\widehat{m}(0.5,a)$ (dotted  line) and $\widehat{m}(3,a)$ (dashed  line).}}\label{simu2}
\end{center}
\end{figure}

\begin{figure}[!ht]
\begin{center}
\begin{tabular}{ccc}
  (a) & (b)\\
 \includegraphics[width=5cm,angle=0,trim= 0cm 0cm 0cm 0cm]{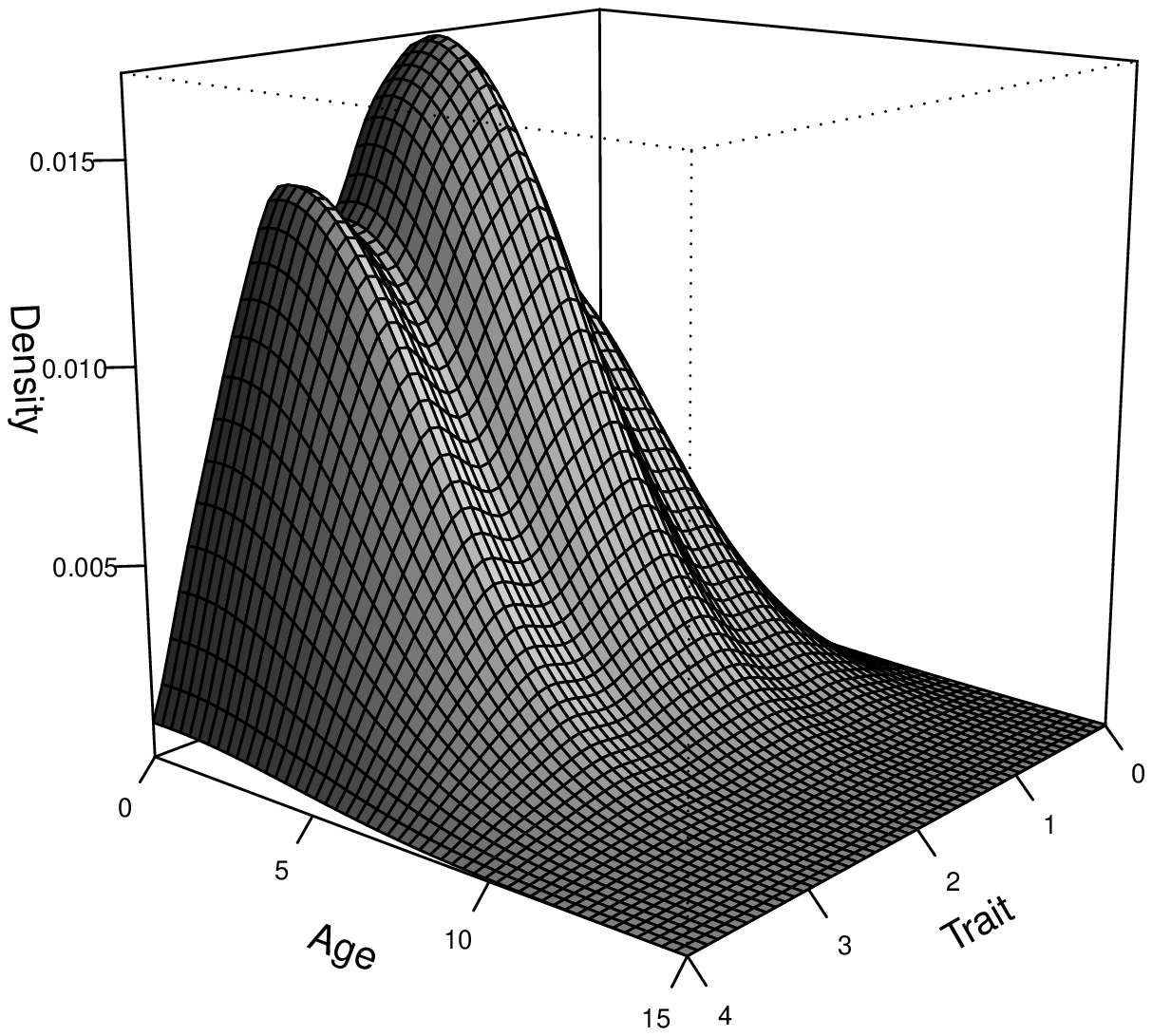} & \includegraphics[width=5cm,angle=0,trim= 0cm 0cm 0cm 0cm]{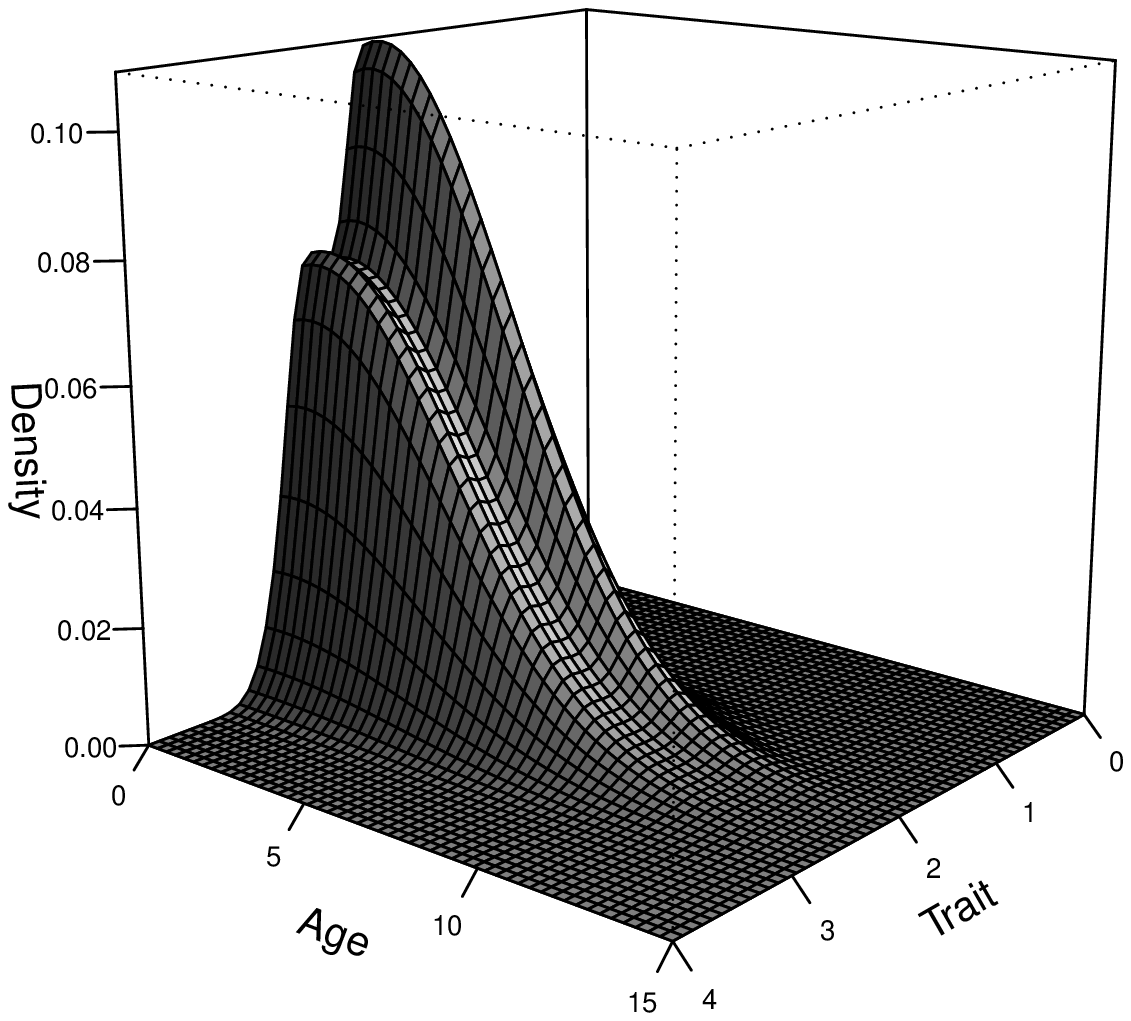}
\end{tabular}
\caption{\textit{Joint distributions in trait and age for the simulation of Fig. \ref{simu2}. We see that contrarily to the example of Section \ref{exemple_r(x)=1}, we do not have here independence of traits and age. (a): $\sigma=1$. (b): $\sigma=0.2$}}\label{simu2age}
\end{center}
\end{figure}

\me In this example, there is a higher senescence for individuals with trait $x>1$, compared with the example of Section \ref{exemple_r(x)=1}. The new choice of $r(x,a)$ influences the age distribution: lifelengths are shortened. This can be seen on the smaller support of the age distribution (compare Fig. \ref{simu2}-(c) with Fig. \ref{simu1}-(c)). \\
However, the populations are more persistent in the example of this section, although it can be proved similarly to Prop. \ref{prop_extinctionps} that there is almost sure extinction. Indeed, contrary to the populations in Example 1 which are extinct at $t=2$, the population of Example 2 still survives at $t=20$. One reason is that the growth rate in the finite variation term of \eqref{pbm_ex2} is bigger than the one in Example 1. Indeed, for many values of $x$, the factor $2\exp(1/2x)\Phi(-1/\sqrt{x})$ in the birth rate $\widehat{b}(x)$ is  bigger than the factor $1/2$. For $x=1.5$, $2\exp(1/2x)\Phi(-1/\sqrt{x})=0.58>0.5$ and for $x=3$, $2\exp(1/2x)\Phi(-1/\sqrt{x})=0.67>0.5$. \\
When comparing Fig. \ref{simu1}-(b) and Fig. \ref{simu2}-(b), we observe more fluctuations of the population size in Example 2. The bracket of the martingale in (\ref{pbm_ex2}) presents  a multiplicative $x$ term, compared to (\ref{pbm_simu1}). As soon as $x>{\pi\over 2}$, this explains the increased variance. Notice however that this variance tends to zero when the population size tends to zero, which also explains why there is no decrease in the population persistence.\\
Finally, the   multiplicative
 term  $p \sqrt{x/2\pi}\sigma^2$ in front of the diffusion term $\Delta\varphi(x)$ explains the large variability of the trait support, which  is observed  in Fig. \ref{simu2}-(a). When the diffusion coefficient $\sigma$ is small (second line of Fig. \ref{simu2}),  the traits evolve towards a value between $x=2$ and $x=4$ where the trade-off between reproduction and competition is optimized.\\


\noindent \textbf{Acknowledgements:} This work benefited from the support of the  ANR MANEGE (ANR-09-BLAN-0215) and from the "Chaire Modélisation Mathématique et Biodiversité of Veolia Environnement-Ecole Polytechnique-Museum National d'Histoire Naturelle-Fondation X". The authors thank S. Billiard, T. Kurtz and L. Popovic for enriching discussions.


{\footnotesize
\providecommand{\noopsort}[1]{}\providecommand{\noopsort}[1]{}\providecommand{\noopsort}[1]{}\providecommand{\noopsort}[1]{}

}\end{document}